
\documentclass[12pt]{amsart}%
\usepackage{amsfonts}
\usepackage{amsmath}
\usepackage{amssymb}
\usepackage{graphicx}
\usepackage{comment}%
\setcounter{MaxMatrixCols}{30}
\includecomment{comment1}
\includecomment{newrigid}
\newtheorem{theorem}{Theorem}
\theoremstyle{plain}
\newtheorem{acknowledgement}{Acknowledgement}

\newtheorem{corollary}{Corollary}

\newtheorem{lemma}{Lemma}

\newtheorem{proposition}{Proposition}

\newtheorem{thm}{Theorem}
\theoremstyle{definition}
\newtheorem{definition}{Definition}
\newtheorem{example}{Example}
\newtheorem{remark}{Remark}

\numberwithin{equation}{section}
\begin{document}
\title{Tiling Iterated Function Systems}
\author[L. F. Barnsley]{Louisa F. Barnsley}
\address{Australian National University\\
Canberra, ACT, Australia }
\author[M. F. Barnsley]{Michael F. Barnsley}
\address{Australian National University\\
Canberra, ACT, Australia }
\author[A. Vince]{Andrew Vince}
\address{University of Florida\\
Gainesville, FL, USA \\
 }
\date{26 Nov 2020}

\begin{abstract}
This paper presents a detailed symbolic approach to the study of self-similar
tilings. It uses properties of addresses associated with graph-directed
iterated function systems to establish conjugacy properties of tiling spaces.
Tiles may be fractals and the tiled set maybe a complicated unbounded subset
of $\mathbb{R}^{M}$.

\end{abstract}
\subjclass[2010]{28A80 05B45 52C22}
\keywords{tilings, fractal geometry, iterated function systems}
\maketitle

\section{Introduction \label{sec:intro}}

This paper presents a symbolic approach to the study of self-similar tilings.
It uses graph-directed iterated function systems to produce and analyze both
classical tilings of $\mathbb{R}^{M}$ and also other generalized tilings of
what may be unbounded fractal subsets of $\mathbb{R}^{M}$. Our primary goal is
to understand conjugacy properties of these tilings.

See \cite{hutchinson} for formal background on iterated function systems (IFS)
and \cite{fatehi} for a recent review. We are concerned with graph directed
IFSs as defined here, but see also \cite{arnoux, bandtTILE, bedfordGraph, das,
dekking, frank2, mauldin, werner}. Terms in this introduction are defined
formally elsewhere in the text.

\subsection{Two examples}

Here we illustrate informally two simple examples of the construction and
properties of what we call \textit{rigid} fractal tilings. We use these
examples to illustrate Theorem \ref{basictheorem}.

Let $A\subset\mathbb{R}^{2}$ be either the filled hexagonal polygon
illustrated in Figure \ref{fig1-intro}(i) or the fractal illustrated in Figure
\ref{fig1-intro}(iii). $A$ in Figure \ref{fig1-intro}(i) satisfies the
equation
\[
A=E_{1}(sA)\cup E_{2}(s^{2}A)
\]
where $0<s$ solves $s^{2}+s-1=0$ and $E_{1}$, $E_{2}$ are the isometries
implied by Figure \ref{fig1-intro}(ii). Likewise, $A$ in Figure
\ref{fig1-intro}(iii) satisfies the same equation, but here $0<s$ solves
$s^{4}+s-1=0$ and $E_{1}$, $E_{2}$ are the isometries implied by Figure
\ref{fig1-intro}(iv). In both cases we say that $A$ is \textit{tiled} by
\textit{copies }of the two \textit{prototiles} $sA$ and $s^{2}A$.

\begin{figure}[ptb]
\centering
\includegraphics[
height=3.6322in,
width=3.8813in
]{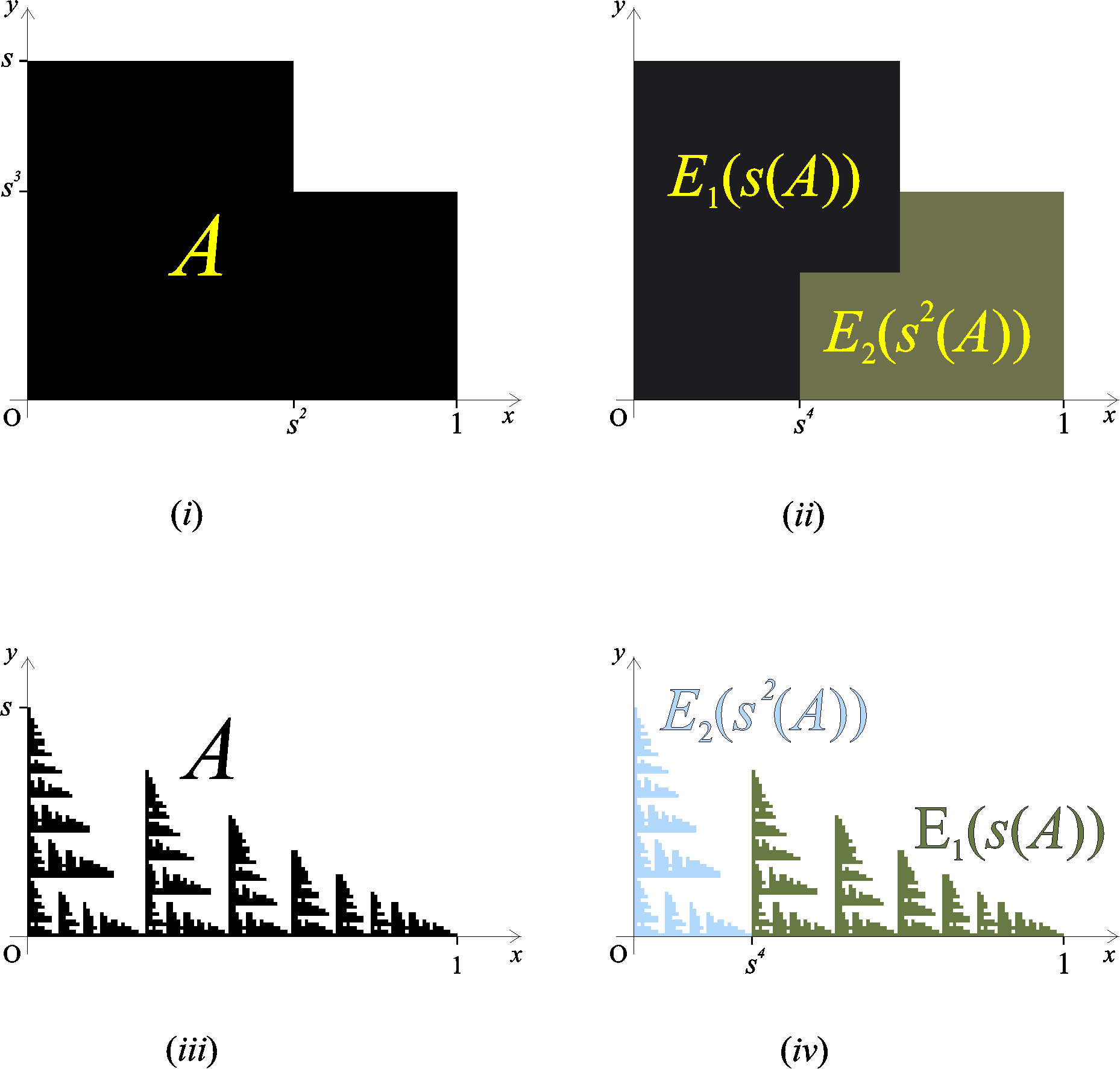}\caption{See text.}%
\label{fig1-intro}%
\end{figure}

The little tilings, in Figures \ref{fig1-intro}(ii) and \ref{fig1-intro}(iv),
share three interesting properties. If we scale up either tiling in Figure
\ref{fig1-intro} by $s^{-1}$, the tile $E_{1}(sA)$ becomes a copy of $A$ and
the tile $E_{2}(s^{2}A)$ becomes a copy of $sA$. The former can be split into
a copy of $sA$ and a copy of $s^{2}A.$ We can repeat this scaling up and
splitting to form, in each case, a sequence of successively strictly larger
tilings as illustrated in Figure \ref{fig2-intro}. We call the tilings in
these sequences \textit{canonical }tilings, $\{T_{n}\}.$

\begin{figure}[ptb]
\centering
\includegraphics[
height=1.9593in,
width=3.8829in
]{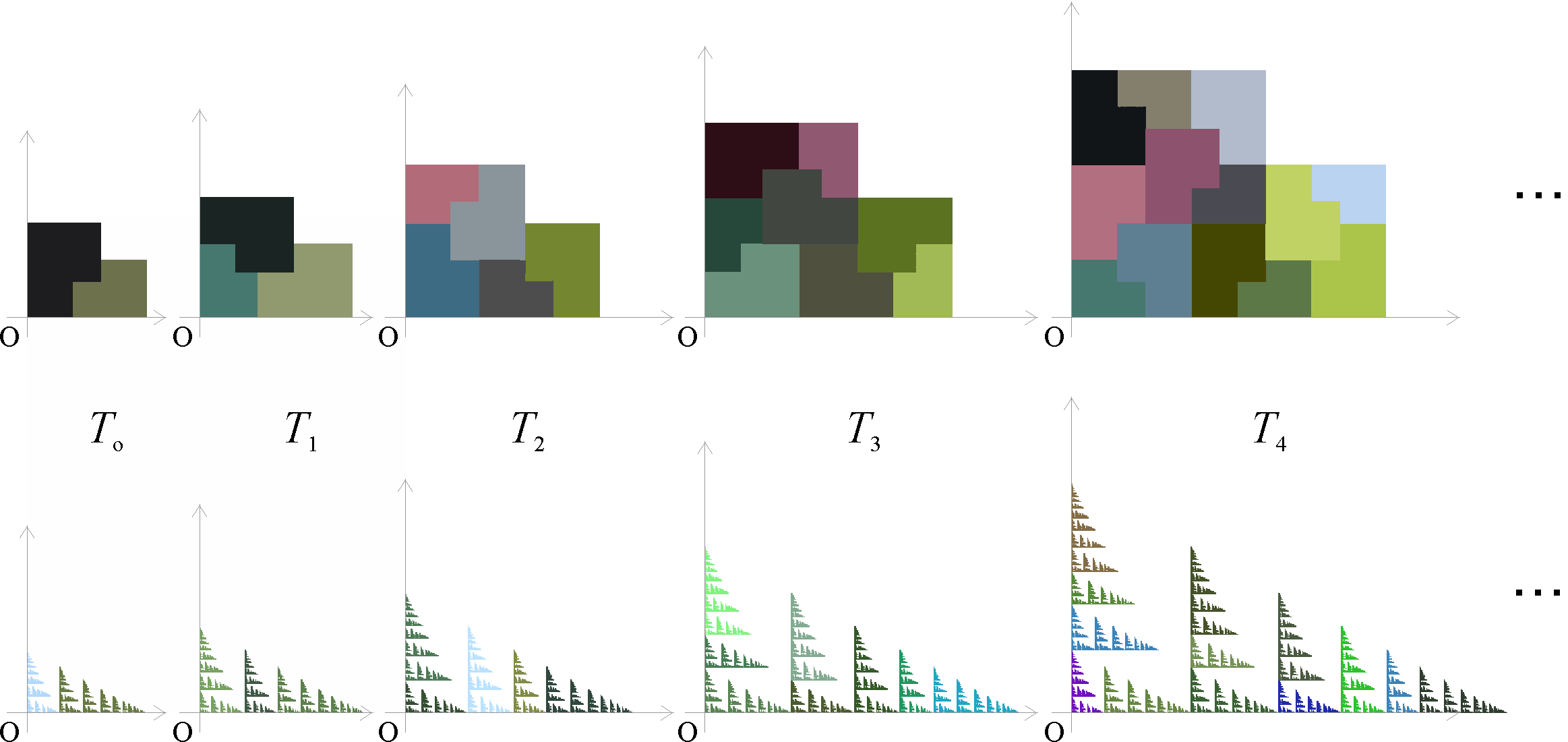}\caption{This illustrates the sequences of canonical tilings
$\{T_{n}\}$ associated with two examples. See text.}%
\label{fig2-intro}%
\end{figure}

The second interesting property is this. Consider the sequence of canonical
tilings $\{T_{n}\}$ in the first example. Let $E_{1}$ and $E_{2}$ be Euclidean
transformations on $\mathbb{R}^{2}$. Suppose $E_{1}T_{k}\cap$ $E_{2}T_{l}$ is
nonempty and tiles $E_{1}s^{-k}A\cap E_{2}s^{-l}A$. Then either $E_{1}%
T_{k}\subset E_{2}T_{l}$ or $E_{2}T_{l}\subset E_{1}T_{k}$. This is a
consequence of the observation that if $s^{k}T_{0}\cap$ $ET_{0}$ is nonempty
for some integer $k$ and some isometry $E,$ then $E=I$ and $k=0$. We say that
the little tiling in Figure \ref{fig1-intro}(ii) is \textit{rigid} (with
respect to Euclidean transformations). Similarly, we say that the tiling in
Figure \ref{fig1-intro}(iv) is rigid (with respect to non-flip Euclidean transformations).

The third interesting property is this. There are non-denumerably many
different infinite sequences of isometries $\{E_{k_{n}}\},$ where $\{k_{n}\}$
is a subsequence of the positive integers, such that $E_{k_{n}}T_{k_{n}%
}\subset E_{k_{n+1}}T_{k_{n+1}}$. This enables us to define an unbounded
tiling $T(\{E_{k_{n}}\}):=%
{\textstyle\bigcup}
E_{k_{n}}T_{k_{n}}$ corresponding to the sequence $\{E_{k_{n}}\}.$

This illustrates informally a generalization of a standard construction
\cite{anderson} of self-similar tilings that applies both to classical
tilings, as defined by Grunbaum and Sheppard \cite{grun}, and to certain
purely fractal tilings. It also illustrates the notion of rigid tilings.

As in self-similar tiling theory, a key question is: When does $T(\{E_{k_{n}%
}\})$ $=ET(\{E_{k_{n}^{\prime}}^{\prime}\})$ for some isometry $E$? Theorem
\ref{basictheorem} (below) answers this question for the case of rigid tilings.

To informally explain Theorem \ref{basictheorem}, we redefine the tilings
$T(\{E_{k_{n}}\})$ using the language of iterated function systems (IFS). Each
of the above examples is associated with a pair of contractive similitudes
that comprise an IFS $\{f_{1}:\mathbb{R}^{2}\rightarrow\mathbb{R}^{2}%
,f_{2}:\mathbb{R}^{2}\rightarrow\mathbb{R}^{2}\}$ such that there is a fixed
$0<s<1$ so that, for all $k\in\mathbb{N=}\{1,2,...\},$ for all $x\in
\mathbb{R}^{2}$,
\[
f_{\theta_{1}}^{-1}\circ f_{\theta_{2}}^{-1}\circ...\circ f_{\theta_{k}}%
^{-1}x=s^{-\xi}U(\theta_{1},\theta_{2},...,\theta_{k})x+t(\theta_{1}%
,\theta_{2},...\theta_{k})
\]
where $\xi=\theta_{1}+\theta_{2}+...+\theta_{k},$ $U$ is a unitary
transformation on $\mathbb{R}^{2}$ and $t$ is a translation, both dependent
only on $(\theta_{1}\theta_{2}...\theta_{k})\in\{1,2\}^{k}$.

We define a family of partial tilings in terms of canonical tilings by%
\[
\Pi(\theta_{1}\theta_{2}...\theta_{k})=f_{\theta_{1}}^{-1}\circ f_{\theta_{2}%
}^{-1}\circ...\circ f_{\theta_{k}}^{-1}s^{\xi}T_{\xi}{}_{{}}%
\]
It is a remarkable and beautiful fact that
\[
\Pi(\theta_{1})\subset\Pi(\theta_{1}\theta_{2})\subset....\subset\Pi
(\theta_{1}\theta_{2}...\theta_{k})
\]
so that for all $\theta_{1}\theta_{2}\theta_{3}...$
\[
\Pi(\theta_{1}\theta_{2}\theta_{3}...):=\cup\Pi(\theta_{1}\theta_{2}%
...\theta_{k})
\]
is a well-defined unbounded tiling of (possibly a subset of) $\mathbb{R}^{2}$.

In Section \ref{relationsec} we establish an equivalence between
representations of tilings in the form $T(\{E_{k_{n}}\})$ with representations
in the form $\Pi(\theta_{1}\theta_{2}\theta_{3}...)$.

Our question "When does $T(\{E_{k_{n}}\})$ $=ET(\{E_{k_{n}^{\prime}}^{\prime
}\})?"$ becomes: "When does $\Pi(\theta_{1}\theta_{2}\theta_{3}...)=E\Pi
(\psi_{1}\psi_{2}\psi_{3}...)?$".

\textbf{Theorem }\ref{basictheorem}: \textit{Let }$\left(  \mathcal{F}%
,\mathcal{G}\right)  $\textit{ be a tiling IFS.}

\textit{(i) If }$\theta,\psi\in\Sigma_{\infty}^{\dag}$\textit{, }$S^{p}%
\theta=S^{q}\psi,$\textit{ }$E=f_{-\theta|p}(f_{-\psi|q})^{-1},$\textit{
}$\left(  \theta|p\right)  ^{+}=\left(  \psi|q\right)  ^{+}$\textit{, and
}$\xi\left(  \theta|p\right)  =\xi\left(  \psi|q\right)  ,$\textit{ then }%
$\Pi(\theta)=E\Pi(\psi)$\textit{ where }$E$\textit{ is an isometry.}

\textit{(ii) Let }$\left(  \mathcal{F},\mathcal{G}\right)  $\textit{ be rigid,
and let }$\Pi(\theta)=E\Pi(\psi)$\textit{ where }$E\in\mathcal{U}$\textit{ is
an isometry, for some pair of addresses }$\theta,\psi\in\Sigma_{\infty}^{\dag
}.$\textit{ Then there are }$p,q\in N$\textit{ such that }$S^{p}\theta
=S^{q}\psi,$\textit{ }$E=f_{-\left(  \theta|p\right)  }(f_{-\left(
\psi|q\right)  })^{-1},\left(  \theta|p\right)  ^{+}=\left(  \psi|q\right)
^{+}$\textit{, and }$\xi\left(  \theta|p\right)  =\xi\left(  \psi|q\right)  .$

The statement of Theorem \ref{basictheorem} involves terms that are defined
precisely in Sections 2 and 3. In the present context: $\left(  \mathcal{F}%
,\mathcal{G}\right)  $ is the IFS $\mathcal{F=}\{f_{1},f_{2}\}$ with a
directed graph $\mathcal{G}$ with two edges and one vertex, $\Sigma_{\infty
}^{\dag}=\{1,2\}^{\infty}$, $S^{p}\theta=\theta_{p+1}\theta_{p+2}...$,
$f_{-\theta|p}=f_{\theta_{1}}^{-1}\circ f_{\theta_{2}}^{-1}\circ...\circ
f_{\theta_{p}}^{-1},$ $\left(  \theta|p\right)  ^{+}=\left(  \psi|q\right)
^{+}=A$, $\xi\left(  \theta|p\right)  =\theta_{1}+\theta_{2}+...+\theta_{p},$
and $\xi\left(  \theta|p\right)  =\psi_{1}+\psi_{2}+...+\psi_{q}$. For the
first example, $\mathcal{U}$ is a set of Euclidean transformations. For the
second example $\mathcal{U}$ is the set of non-flip Euclidean transformations
together with the set of transformations described in part (i) of Theorem
\ref{keythm}.

Part (i) of Theorem \ref{basictheorem}, in the present context, asserts that
if there are positive integers $p$ and $q$ so that $\theta_{1}+\theta
_{2}+...+\theta_{p}=\psi_{1}+\psi_{2}+...+\psi_{q}$ and $\theta_{p+i}=$
$\psi_{q+i}$ for all $i\in\mathbb{N}$, then $\Pi(\theta_{1}\theta_{2}%
\theta_{3}...)=E\Pi(\psi_{1}\psi_{2}\psi_{3}...)$ with $E=f_{\theta_{1}}%
^{-1}\circ f_{\theta_{2}}^{-1}\circ...\circ f_{\theta_{p}}^{-1}\circ
f_{\psi_{q}}^{{}}\circ f_{\psi_{q-1}}^{{}}...\circ f_{\psi_{1}}^{{}}$. Part
(ii) of Theorem \ref{keythm} asserts how, for rigid systems, these conditions
are also necessary.

This completes our informal introduction to the central result in this paper.

\subsection{Main result and related work}

In fact, the tilings just described are associated with inverse limits as in
\cite{sadun}$.$ In the body of this paper a tiling $\Pi(\theta)$ is associated
with a path $\theta$ of a directed graph and an IFS. The general question is:
when does $\Pi(\theta)=E\Pi(\psi)$ for some pair of paths $\theta$ and $\psi$
and some isometry $E$? This question lies at the back of many ideas related to
homology, spectral theory of operators defined on tiling spaces, and
non-commutative geometry. See for example the brief overview in \cite{frank}%
$.$ Our main result is Theorem \ref{basictheorem} which explains exactly when
$\Pi(\theta)=E\Pi(\psi)$ for rigid systems. Much of the work in this paper is
to set up the framework, to define the tilings $\Pi(\theta)$ and describe some
of their basic properties.

There are relationships between this work and Solomyak \cite{solomyak,
solomyak2}, and Anderson and Putnam \cite{anderson}, and many other works on
tiling theory. However our approach to the construction of tilings is more
general because we include purely fractal tilings, where tiles may have empty
interiors, as well as more standard self-similar tilings. Our methods are
based on addresses associated with graph directed IFS and mappings from these
addresses into tilings and tiling spaces.

We mention the work of Pearse \cite{pearse1} and Pearse and Winter
\cite{pearse2} concerning tilings of the convex hull of attractors of IFSs. We
do not discuss their construction here. But we note that it is relevant
because their canonical tilings may be extended to tilings of $\mathbb{R}^{2}$
by taking inverse limits. It appears that such tilings may cover the
complements of the supports of some of the tilings we discuss.

We mention the recent work of Smilansky and Solomon \cite{smilansky}
concerning non commensurable tilings of $\mathbb{R}^{2}$. While we do not
discuss non commensurable fractal tilings in this paper, we note that such
tilings may be described symbolically via a natural extension of the present
framework, along the lines of \cite{barnvince2019}.

This paper extensively develops \cite{barnsleyvince} which concerns tilings
derived from attractors of IFSs with trivial graphs. Here we generalize to
graph-directed IFSs and show that a certain property, rigidity, implies a
specific equivalence class structure on the tiling space. In the context of
standard self-similar tiling theory, as considered for example in
\cite{anderson, solomyak}, rigidity is largely equivalent to the unique
composition property\ and to recognizability. But it is a more general
geometrical notion and it also applies to purely fractal structures. It is
also related to, but distinct from, the notion of measure rigidity in IFS
theory \cite{hoch}. Our main results are Theorems \ref{basictheorem} and
\ref{lastthm}. They describe the possible conjugacy classes of isometries
applied to rigid fractal tilings.

This paper is the completion of \cite{barnvince2018}, which initiated our
study of graph-directed fractal tiling theory. It has relationships with
\cite{barnvince2019}, which uses graph-theoretic language and what we call
tiling hierarchies. Here the point of view is that of iterated function
systems, addresses\ and certain supertiles called canonical tilings. This
paper goes much further than \cite{barnvince2019}. For example it considers
purely fractal tilings, continuity properties of the map from addresses\ to
tilings, the formal description of canonical tilings in terms of addresses,
and the relationship between rigidity and recognizability.

\subsection{Outline}

Section \ref{sec:one} introduces notation and concepts needed throughout. We
define a graph IFS $(\mathcal{F},\mathcal{G)}$ where $\mathcal{F}$ is an IFS
on $\mathbb{R}^{M}$, $\mathcal{G}$ is a directed graph, and paths in
$\mathcal{G}$ correspond to allowed compositions of functions in $\mathcal{F}%
$. We present our notation for paths $\Sigma$ in $\mathcal{G}$ and paths
$\Sigma^{\dag}$ in $\mathcal{G}^{\dag},$ the reversed graph. Definition
\ref{attractordef} specifies the attractor $A$ of $(\mathcal{F},\mathcal{G)}$
and the address map $\pi$ which takes paths and vertices of $\mathcal{G}$ to
subsets of $A.$ Theorem \ref{thm:two} states the continuity properties of
$\pi$. Definition \ref{disjunctivedef} defines disjunctive paths. These are
infinite paths that visit all vertices in every allowed order. Insight into
the relationship between disjunctive points and $A$ is provided by Theorem
\ref{theoremthree} and underlies a simple chaos game algorithm, generalizing
\cite{barnsleychaos}$,$ for calculating both $A$ and the tilings discussed in
this paper. Properties of the shift map $S,$ Definition \ref{shiftdef}, acting
on paths and vertices are stated in Theorem \ref{theoremshift}. Subsection
\ref{subsec:disj} introduces a part of dynamical systems theory relevant later
to describing intersections of fractal tiles. In Theorem \ref{ergodictheorem}
the pointwise ergodic theorem is applied to establish that the image under
$\pi$ of the disjunctive points in $\Sigma$ have full measure, for many
natural stationary measures on $A$.

Section \ref{tilingsec} establishes tiling IFSs and their tilings. A
generalized notion of a tiling, to accommodate fractal supports, is described
in Subsection \ref{tilingdefsec}. A tiling IFS is a graph IFS $(\mathcal{F}%
$,$\mathcal{G)}$ with the special conditions in Definition \ref{defONE}. In
particular, it is required that $(\mathcal{F}$,$\mathcal{G)}$ obeys the open
set condition (OSC) in Definition \ref{oscdef}. According to Theorem
\ref{thm:three} a tiling map $\Pi,$ tilings $\Pi(\theta)$ and sets of tiles
associated with paths $\theta\in\Sigma^{\dag}$, are well-defined by Definition
\ref{def10}. Definitions \ref{def6}, \ref{def7}, \ref{def8}, \ref{def9}
describe the critical set, the dynamical boundary, and the inner boundaries of
the attractor of a tiling IFS. These objects, and their relationship with
disjunctive points (they don't contain any), play a key role in describing the
nonempty intersections of the tiles in $\Pi(\theta)$. Some of their properties
are the subject of Theorem \ref{theorem5}, which also provides the Hausdorff
dimensions of the attractor of $(\mathcal{F}$,$\mathcal{G)}$ and the tiles in
$\Pi(\theta)$. Theorem \ref{theorem5} underpins Theorem \ref{thm:three}.

Section \ref{continuitysec} studies continuity properties of the tiling
function $\Pi(\theta).$ A convenient metric $d_{\mathbb{T}}$ on the space of
tilings $\mathbb{T}=$ $\Pi({\Sigma}^{\dag})$ is introduced. Theorem
\ref{compactthm} says that $(\mathbb{T},d_{\mathbb{T}})$ is a compact metric
space, and Theorem \ref{ctytheorem} says that $\Pi:{\Sigma}^{\dag}%
\rightarrow\mathbb{T}$ is upper semi-continuous, but continuous when
restricted to reversible points$,$ a generalization of a notion in
\cite{tilings}. A proof of Theorem \ref{ctytheorem}, using a natural
generalization of central open sets as\ defined by Bandt \cite{bandtneighbor2}%
, is presented.

Section \ref{symbosection} examines the combinatorics of the addresses of
finite tilings in $\Pi(\theta).$ Theorem \ref{lem:struct} relates the
addresses of tiles in $\Pi(\theta),$ where $\left\vert \theta\right\vert $ is
finite, to addresses of tiles in copies of tilings contained in $\Pi(\theta)$.

Section \ref{canonical} introduces canonical tilings. Definition
\ref{canondef} defines the canonical tilings $T_{k}^{v}$ indexed by a vertex
$v\in\mathcal{G}$ and $k\in\mathbb{N}$. All tilings $\Pi(\theta)$ comprise
what we call isometric combinations of canonical tilings. Theorem
\ref{piusingts} gives identities between isometric combinations, and follows
from Theorem \ref{lem:struct}. Canonical tilings, their notation, and related
identities, play a key role in establishing our main results.

Section \ref{quasisec} considers general properties of tilings $\Pi(\theta).$
The notion of a coprime graph and standard properties of tilings such as
quasiperiodic, local isomorphism, and self-similarity, are defined. Theorem
\ref{theoremTHREE} states that if $\mathcal{G}$ is coprime then all tilings
$\Pi(\theta)$ with $\left\vert \theta\right\vert =\infty$ are quasiperiodic,
and that any pair are locally isomorphic; also if $\theta$ is eventually
periodic, then $\Pi(\theta)$ is self-similar. The proof uses earlier
identities involving canonical tilings.

Section \ref{relsec} introduces relative and absolute addresses of canonical
tilings and uses them to establish deflation $\alpha$ and inflation
$\alpha^{-1},$ operators that act on the graph of $\Pi(\theta)$ to produce new
objects. Relative addresses are associated with copies of canonical tilings
$T_{k}^{v}$ and are defined in Definition \ref{relativedef}. Lemma
\ref{bijlemma} notes that the relative addresses of $T_{k}^{v}$ are in
bijection with the subset $\Omega_{k}^{v}$ of ${\Sigma}$. Theorem
\ref{hierarchythm} explains how a relative address is associated with a
hierarchy of canonical tilings. Absolute addresses are also defined and in
Theorem \ref{relabsthm} a relationship between absolute and relative addresses
is exhibited. In Definition \ref{infldef1}, inflation and deflation of
canonical tilings are defined according to $\alpha T_{k}^{v}=T_{k-1}^{v}$.
Finally, Definition \ref{uniondef} supported by Theorem \ref{keythm2}
establishes how the domains of $\alpha$ and $\alpha^{-1}$ can be extended to
include the graph of $\Pi(\theta)$, and how their actions relate to
$\Pi(S\theta)$. This is a key result.

In Section \ref{secrigid} it is pointed out that $\alpha$ may not act
consistently on isometric combinations of canonical tilings. We define rigid
tilings and rigid tiling IFSs, and extend the definitions of $\alpha$ and
$\alpha^{-1}$ so they act consistently on isometric combinations of rigid
canonical tilings. Definition \ref{meetdef} specifies what it means for two
tilings to \textit{meet} and in Definition \ref{localdef} we define what is a
\textit{rigid} tiling. The notion of a rigid tiling is with respect to a set
of isometries $\mathcal{U}$ that act on tiles and tilings. In Lemma
\ref{keylemma} it is explained that for rigid tilings, if certain scaled
copies of canonical tilings meet, then one is contained in the other. Theorem
\ref{equivalence} provides some properties of rigid tilings and gives an
alternative test for rigidity.

Also in Section \ref{secrigid}, the definitions of $\alpha$ and $\alpha^{-1}$
are extended to include local action on isometric combinations of canonical
tilings and on $\Pi(\theta)$ (without regard for $\theta$) so if two isometric
combinations represent the same tiling then $\alpha$ may act consistently
term-by-term to produce the same result, and similarly for $\alpha^{-1}.$ The
local actions of $\alpha$ and $\alpha^{-1}$ on tilings are defined using the
concepts of \textit{large} tiles and \textit{partners}. Theorem
\ref{welldefined} lists properties of $\alpha$ and $\alpha^{-1}$ acting on
isometric combinations of canonical rigid tilings, and leads to Corollary
\ref{corollaryX} which asserts that if $\Pi(\theta)\subset E\Pi(\psi)$ for
some $E\in\mathcal{U}$, then $\alpha^{K}$ can be applied to the two tilings
$\Pi(\theta)$ and $E\Pi(\psi)$ to yield $\alpha^{K}\left(  \Pi(\theta)\right)
\subset\alpha^{K}\left(  E\Pi(\psi)\right)  ,$ without knowing $\theta$ and
$\psi$.

Section \ref{charismasec} arrives at a main result of this paper, concerning
rigid tilings. Theorem \ref{basictheorem} tells us exactly when, for rigid
tilings, $\Pi(\theta)=E\Pi(\psi)$ for some $E\in\mathcal{U}$. The proof uses
properties of relative addresses given in Theorem \ref{keythm}. It is
necessary that the two addresses have a common tail, and $E$ is defined in
terms of the initial parts of the addresses of $\theta$ and $\psi$. Then
Corollary \ref{aperithm} tells us that if $\mathcal{U}$ contains the group of
Euclidean translations, then $\Pi(\theta)$ is not periodic for any infinite
$\theta\in\Sigma^{\dag}.$

Section \ref{notrigidsec} explores consequences of $\Pi(\theta)=E\Pi(\psi),$
where $E$ is some isometry, without requiring rigidity. It contains one
definition and one theorem. The section begins by showing by example that it
can occur that $\Pi(\theta)=E\Pi(\psi)$ implies $\alpha^{K}\left(  \Pi
(\theta)\right)  =\alpha^{K}\left(  E\Pi(\psi)\right)  $ without requiring
rigidity. Such examples are termed \textit{well-behaved} in Definition
\ref{welldef}. For well-behaved tilings, Theorem \ref{lastthm} details the
structure of $E$ such that $\Pi(\theta)=E\Pi(\psi).$ It is essentially
equivalent to Theorem \ref{basictheorem} in the case of rigid tilings.

Section \ref{relationsec} establishes a relationship between this work and
Anderson and Putnam \cite{anderson}. It is proved that the tiling space of
\cite{anderson} is conjugate to $\{E\Pi(\theta):$ specified set of
translations $E$ and addresses $\theta\}$.

\section{\label{sec:one}Foundations}

\subsection{\label{sec:two:one}Graph iterated function systems}

Let $\mathcal{F}$ be a finite set of invertible contraction mappings
$f:\mathbb{R}^{M}\rightarrow\mathbb{R}^{M}$ each with contraction factor
$0<\lambda<1$, that is $\left\Vert f(x)-f(y)\right\Vert \leq\lambda\left\Vert
x-y\right\Vert $ for all $x,y\in\mathbb{R}^{M}$. We suppose%
\[
\mathcal{F=}\left\{  f_{1},f_{2},...,f_{N}\right\}  ,\text{ }N>1
\]
Let $\mathcal{G}=\left(  \mathcal{E},\mathcal{V}\right)  $ be a strongly
connected primitive directed graph with edges $\mathcal{E}$ and vertices
$\mathcal{V}$ with
\[
\mathcal{E=}\left\{  e_{1},e_{2},...,e_{N}\right\}  \text{, }\mathcal{V=}%
\left\{  \upsilon_{1},\upsilon_{2},...,\upsilon_{V}\right\}  ,\text{ }1\leq
V<N
\]
$\mathcal{G}$ is strongly connected means there is a path, a sequence of
consecutive directed edges, from any vertex to any vertex. $\mathcal{G}$ is
primitive means that if $\mathcal{W}$ is the $V\times V$ matrix whose
$ij^{th}$ entry is the number of edges directed from vertex $j$ to vertex $i,$
then there is some power of $\mathcal{W}$ whose entries are all strictly positive.

We call $\left(  \mathcal{F},\mathcal{G}\right)  $ a \textit{graph
IFS}.\textit{\ }The directed graph $\mathcal{G}$ provides some orders in which
functions of $\mathcal{F}$ may be composed. The sequence of successive
directed edges $e_{\sigma_{1}}e_{\sigma_{2}}\cdots e_{\sigma_{k}}$ may be
associated with the composite function%
\[
f_{\sigma_{1}}f_{\sigma_{2}}\cdots f_{\sigma_{k}}:=f_{\sigma_{1}}\circ
f_{\sigma_{2}}\circ\cdots\circ f_{\sigma_{k}}%
\]

\subsection{Notation for paths in $\mathcal{G},$ $\mathcal{G}^{\dag}$ and
compositions of functions}

Let $\mathbb{N}$ be the strictly positive integers and $\mathbb{N}%
_{0}=\mathbb{N\cup\{}0\}$. For $N\in\mathbb{N}$, $\left[  N\right]
:=\{1,2,\ldots,N\}$.

$\Sigma$ is the set of directed paths in $\mathcal{G}$, each with an initial
vertex. A path $\sigma\in\Sigma$ is written $\sigma=\sigma_{1}\sigma_{2}%
\cdots$ corresponding to the sequence of successive directed edges
$e_{\sigma_{1}}e_{\sigma_{2}}\cdots$ in $\mathcal{G}$. The length of $\sigma$
is $\left\vert \sigma\right\vert \in\mathbb{N}_{0}\cup\left\{  \infty\right\}
.$ A metric $d_{\Sigma}$ on $\Sigma$ is
\[
d_{\Sigma}(\sigma,\omega):=2^{-\min\{k\in\mathbb{N}:\tilde{\sigma}_{k}%
\neq\tilde{\omega}_{k}\}}\text{ for }\sigma\neq\omega\text{ }%
\]
where $\tilde{\sigma}_{k}=\sigma_{k}$ for all $k\leq\left\vert \sigma
\right\vert $, $\tilde{\sigma}_{k}=0$ for all $k>\left\vert \sigma\right\vert
$. Then $\left(  \Sigma,d_{\Sigma}\right)  $ is a compact metric space.

The set $\Sigma_{\ast}\subset\Sigma$ is the set of directed paths of finite
lengths, and $\Sigma_{\infty}\subset\Sigma$ is the set of directed paths of
infinite length. For $\sigma\in\Sigma$, let $\sigma^{-}\in\mathcal{V}$ be the
initial vertex and, if $\sigma\in\Sigma_{\ast},$ let $\sigma^{+}\in
\mathcal{V}$ be the terminal vertex; and for $v\in\mathcal{V}$ let
\[
\Sigma_{v}:=\{\sigma\in\Sigma_{\infty}:\sigma^{-}=v\}
\]
For $\sigma\in\Sigma$, $k\in\mathbb{N},$
\[
\sigma|k:=\left\{
\begin{array}
[c]{cc}%
\sigma_{1}\sigma_{2}...\sigma_{k}\text{ } & \text{if }\left\vert
\sigma\right\vert >k\\
\sigma_{1}^{+} & \text{if }\left\vert \sigma\right\vert \leq k
\end{array}
\right.
\]
We try to reserve the symbol $\sigma$ to mean a directed path in $\Sigma.$

$\mathcal{G}^{\dag}=(\mathcal{E}^{\dag},\mathcal{V})$ is the graph
$\mathcal{G}$ modified so that the directions associated with all edges are
reversed. For any edge $e\in\mathcal{G}$, we use the same label $e$ for the
corresponding reversed edge in $\mathcal{G}^{\dag}$. The superscript $\dag$
means that the superscripted object relates to $\mathcal{G}^{\dag}$. For
example, $\mathcal{E}^{\dag}=\mathcal{E}$ is the set of edges of
$\mathcal{G}^{\mathcal{\dag}}$, $\Sigma_{\ast}^{\dag}$ is the set of directed
paths in $\mathcal{G}^{\dag}$ of finite length, ${\Sigma}_{\infty}^{\dag}$ is
the set of directed paths in $\mathcal{G}^{\dag}$, each of which starts at a
vertex and is of infinite length, and ${\Sigma}^{\dag}={\Sigma}_{\ast}^{\dag
}\cup{\Sigma}_{\infty}^{\dag}$. We try to reserve the symbol $\theta$ to mean
a directed path in $\Sigma^{\dag}.$

We refer to the edges in both $\mathcal{E}$ and $\mathcal{E}^{\dag}$ by the
same set of indices $\left\{  1,2,...,N\right\}  $. The vertices in both
$\mathcal{G}$ and $\mathcal{G}^{\dag}$ are referred to using the set of
indices $\left\{  1,2,...,V\right\}  $. Then both $f_{e}$ and the inverse of
$f_{e}$
\[
f_{-e}:=f_{e}^{-1}%
\]
are well-defined for all $e\in\mathcal{E}\cup\mathcal{E}^{\dag}$.

Typically in this paper, $\mathcal{G}$ and $\Sigma$ are associated with
compositions of functions in $\mathcal{F}$, while $\mathcal{G}^{\dag}$ and
$\Sigma^{\dag}$ are associated with compositions of their inverses. We use the
following notation.%

\begin{align*}
f_{\sigma|k}  &  :=\left\{
\begin{array}
[c]{cc}%
f_{\sigma_{1}}f_{\sigma_{2}}\cdots f_{\sigma_{k}}\text{ } & \text{if
}\left\vert \sigma\right\vert >k\\
f_{\sigma_{1}^{+}} & \text{if }\left\vert \sigma\right\vert \leq k
\end{array}
\right.  \text{ for all }\sigma\in\Sigma\\
f_{\sigma}  &  =f_{\sigma_{1}}f_{\sigma_{2}}\cdots f_{\sigma_{\left\vert
\sigma\right\vert }}\text{ for all }\sigma\in\Sigma_{\ast}\\
f_{-\left(  \theta|k\right)  }  &  :=\left\{
\begin{array}
[c]{cc}%
f_{\theta_{1}}^{-1}f_{\theta_{2}}^{-1}\cdots f_{\theta_{k}}^{-1}\text{ } &
\text{if }\left\vert \theta\right\vert >k\\
f_{-\theta_{1}^{+}} & \text{if }\left\vert \theta\right\vert \leq k
\end{array}
\right.  \text{ for all }\theta\in\Sigma^{\dag}\\
f_{-\theta}  &  =f_{\theta_{1}}^{-1}f_{\theta_{2}}^{-1}\cdots f_{\theta
_{\left\vert \sigma\right\vert }}^{-1}\text{ for all }\theta\in\Sigma_{\ast
}^{\dag}%
\end{align*}
We define $f_{v}=f_{-v}=\chi_{A_{v}}$ for all $v\in\mathcal{V}$ where
$\chi_{A_{v}}$ is the characteristic function of $A_{v}\subset\mathbb{R}^{M}$,
see Definition \ref{attractordef}(iii).

\subsection{Addresses and Attractors}

Let $\mathbb{H}$ be the nonempty compact subsets of $\mathbb{R}^{M}$ and let
$d_{\mathbb{H}}$ be the Hausdorff metric. Singletons in $\mathbb{H}$ are
identified with points in $\mathbb{R}^{M}$.

\begin{definition}
\label{attractordef}The \textbf{attractor} $A$ of the graph IFS $\mathbb{(}%
\mathcal{F},\mathcal{G)}$, its \textbf{components} $A_{v}$, and the
\textbf{address }map $\pi:\Sigma\cup\mathcal{V\rightarrow}\mathbb{H},$ are
defined as follows.
\begin{align*}
\text{(i)}\ \pi(\sigma)  &  :=\lim_{k\rightarrow\infty}f_{\sigma|k}(x)\text{
for }\sigma\in\Sigma_{\infty},\text{ independently of }x\in\mathbb{R}^{M}\\
\text{(ii) }\,\text{\ }\,\,\text{\ \ }A  &  :=\pi(\Sigma_{\infty})\\
\text{(iii)}\ \mathcal{\pi}(v)  &  :=A_{v}:=\pi(\Sigma_{v})\text{ for all
}v\in\mathcal{V}\\
\text{(iv)}\ \mathcal{\pi}(\sigma)  &  :=f_{\sigma}(A_{\sigma^{+}})\text{ for
all }\sigma\in\Sigma_{\ast}%
\end{align*}

\end{definition}

\begin{example}
Let $\mathcal{F}=\{\mathbb{R}^{M};f_{1},f_{2},f_{3},f_{4}\}$ where each
$f_{i}:\mathbb{R}^{M}\rightarrow\mathbb{R}^{M}$ is a contraction. Let
$\mathcal{G}$ be the directed graph with four edges $\{e_{1},e_{2},e_{3}%
,e_{4}\}$ and two vertices $\{v_{1},v_{2}\},$ where $e_{1}$ is directed from
$v_{1}$ to $v_{1},$ $e_{2}$ is directed from $v_{1}$ to $v_{2}$, $e_{3}$ is
directed from $v_{2}$ to $v_{1},$ and $e_{4}$ is directed from $v_{2}$ to
$v_{2}$. Then $A=A_{1}\cup A_{2}$ where $(A_{1},A_{2})$ is the unique pair of
nonempty closed bounded subsets of $\mathbb{R}^{M}$ such that
\begin{align*}
f_{1}(A_{1})\cup f_{2}(A_{2})  &  =A_{1}\\
f_{3}(A_{1})\cup f_{4}(A_{2})  &  =A_{2}%
\end{align*}
and $\pi(243)=f_{2}f_{4}f_{3}(A_{1}).$ Also $\pi(111...)=$ $\pi(\overline{1})$
is the singleton fixed point of $f_{1}$. For instance, if $M=1,$
$f_{1}(x)=0.5x,$ $f_{2}(x)=0.5x-0.5,$ $f_{3}(x)=0.5x+2,$ and $f_{4}%
(x)=0.5x+1.5,$ then $A_{1}=[0,1],$ $A_{2}=[2,3],$ $A=[0,1]\cup\lbrack2,3],$
$\pi(243)=f_{2}f_{4}f_{3}([0,1])=[0.75,0.875]$ and $\pi(\overline{1})=\left\{
0\right\}  $.
\end{example}

\begin{thm}
\label{thm:two} Let $(\mathcal{F},\mathcal{G)}$ be a graph IFS.

(1) $\pi:\Sigma\cup\mathcal{V\rightarrow}\mathbb{H}$ is well-defined and
independent of $x\in\mathbb{R}^{M}$

(2) $\pi:\Sigma\cup\mathcal{V\rightarrow}\mathbb{H}$ is continuous

(3) $\pi(\sigma)=\bigcap\limits_{k=1}^{\left\vert \sigma\right\vert }%
\pi(\sigma|k)$ for all $\sigma\in\Sigma$

(4) $f_{\sigma}(A_{\sigma^{+}})\subset A_{\sigma^{-}}$ for all $\sigma
\in\Sigma_{\ast}$
\end{thm}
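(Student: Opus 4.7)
The plan is to handle parts (1), (4), (2), (3) in this order, building $\pi$ on $\Sigma_{\infty}$ first by uniform approximation and then bootstrapping everything else from that. I would fix any $x_{0}\in\mathbb{R}^{M}$ and set $C:=\max_{i\in[N]}\|f_{i}(x_{0})-x_{0}\|$. For $\sigma\in\Sigma_{\infty}$ and $k\in\mathbb{N}$, the contraction estimate gives
\[
\|f_{\sigma|(k+1)}(x_{0})-f_{\sigma|k}(x_{0})\|
=\|f_{\sigma|k}(f_{\sigma_{k+1}}(x_{0}))-f_{\sigma|k}(x_{0})\|
\leq\lambda^{k}C,
\]
so $(f_{\sigma|k}(x_{0}))_{k}$ is uniformly Cauchy in $\sigma$ and its limit $\pi(\sigma)$ exists. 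The companion bound $\|f_{\sigma|k}(x)-f_{\sigma|k}(y)\|\leq\lambda^{k}\|x-y\|$ forces independence of $x_{0}$. Each approximant $\sigma\mapsto f_{\sigma|k}(x_{0})$ depends only on the first $k$ edges of $\sigma$ and is therefore continuous on $\Sigma_{\infty}$ in the metric $d_{\Sigma}$; the uniform limit $\pi$ is then continuous on $\Sigma_{\infty}$.

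Next, each $\Sigma_{v}$ is closed in the compact space $\Sigma$ (infinite paths cannot $d_{\Sigma}$-converge to finite ones, and fixing the initial vertex is a clopen condition among infinite paths), hence compact. Thus $A_{v}:=\pi(\Sigma_{v})$ is a nonempty compact subset of $\mathbb{R}^{M}$, which completes (1) on $\mathcal{V}$ and makes the definition $\pi(\sigma):=f_{\sigma}(A_{\sigma^{+}})$ for $\sigma\in\Sigma_{\ast}$ unambiguous. For (4), if $\sigma\in\Sigma_{\ast}$ and $\tau\in\Sigma_{\sigma^{+}}$ then the concatenation $\sigma\tau$ lies in $\Sigma_{\sigma^{-}}$, and a direct computation of the defining limit gives $\pi(\sigma\tau)=f_{\sigma}(\pi(\tau))$; letting $\tau$ range over $\Sigma_{\sigma^{+}}$ yields $f_{\sigma}(A_{\sigma^{+}})\subset A_{\sigma^{-}}$.

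For (2), continuity on $\Sigma_{\infty}$ is already in hand and $\mathcal{V}$ is a discrete component handled trivially, so the remaining work concerns sequences involving $\Sigma_{\ast}$. The key quantitative tool is the uniform bound $\mathrm{diam}\,f_{\sigma|k}(A_{(\sigma|k)^{+}})\leq\lambda^{k}\max_{v}\mathrm{diam}(A_{v})$, finite by the previous step, combined with the fact that any two elements of $\Sigma$ agreeing on their first $k$ edges have $\pi$-images inside the same shrinking compactum $f_{\sigma|k}(A_{(\sigma|k)^{+}})$. A short case analysis of the convergence modes permitted by $d_{\Sigma}$ then closes out continuity. For (3), part (4) applied to $\sigma|k$ gives $\pi(\sigma|(k+1))\subset\pi(\sigma|k)$, so $\{\pi(\sigma|k)\}_{k}$ is a nested family of nonempty compacta: on $\Sigma_{\ast}$ the intersection stabilizes at $\pi(\sigma)$ itself, while on $\Sigma_{\infty}$ the diameter estimate collapses it to the singleton $\pi(\sigma)$.

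The main obstacle I anticipate is the bookkeeping in (2) caused by $d_{\Sigma}$ mingling finite paths, infinite paths, and vertices in a single space: one must verify that every convergence mode --- notably finite paths of growing length approaching an infinite path --- is compatible with the shrinking-diameter estimate, and that the boundary case $|\sigma|\leq k$ in the definition of $f_{\sigma|k}$ does not derail the nesting argument for (3). Once the uniform approximation $f_{\sigma|k}(x_{0})\to\pi(\sigma)$ is available, however, this is careful case analysis rather than real mathematical difficulty.
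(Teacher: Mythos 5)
Your proposal is correct and follows essentially the same route as the paper: the paper's proof of (1) simply cites Hutchinson for the contraction/Cauchy argument you spell out, its proof of (2) is exactly your shrinking-diameter estimate $d_{\mathbb{H}}(\pi(\sigma|k),\pi(\sigma|l))\leq\lambda^{\min\{k,l\}}\max_{v,w}d_{\mathbb{H}}(A_{v},A_{w})$, and (3)--(4) are read off from Definition \ref{attractordef}(iv) via the same nesting observation. You supply more detail (compactness of $\Sigma_{v}$, the identity $\pi(\sigma\tau)=f_{\sigma}(\pi(\tau))$, the convergence-mode case analysis) than the paper does, but the underlying argument is the same.
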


\begin{proof}
(1) For all $\sigma\in\Sigma_{\infty}$, $\pi(\sigma)$ is well-defined by (i),
independently of $x$, because $\mathcal{F}$ is strictly contractive
\cite{hutchinson}. It follows that $A$ is well-defined by (ii). Also it
follows that $A_{v}$ and $\pi(v)$ are well-defined by (iii), for all
$v\in\mathcal{V}$ . In turn, $\mathcal{\pi}(\sigma)$ is well-defined for all
$\sigma\in\Sigma_{\ast}$ by Definition \ref{attractordef}(iv). (2) $\pi$ is
continuous because for all $\sigma\in\Sigma_{\infty}$
\[
d_{\mathbb{H}}(\pi(\sigma|k),\pi(\sigma|l))\leq\lambda^{\min\{k,l\}}\max
_{v,w}d_{\mathbb{H}}(A_{v},A_{w})
\]
(3) and (4) follow from Definition \ref{attractordef}(iv).
\end{proof}

\begin{definition}
\label{disjunctivedef}Define $\sigma\in\Sigma_{\infty}$ to be
\textbf{disjunctive} if, given any $\omega\in\Sigma_{\ast}$, there is
$p\in\mathbb{N}$ so that $\omega=\sigma_{p}\sigma_{p+1}...\sigma_{p+\left\vert
\omega\right\vert }$.
\end{definition}

Similarly, $\theta\in\Sigma_{\infty}^{\dag}$ is disjunctive if, given any
$\varphi\in\Sigma_{\ast}^{\dag}$, there is $p\in\mathbb{N}$ so that
$\varphi=\theta_{p}\theta_{p+1}...\theta_{p+\left\vert \varphi\right\vert }$.

\begin{theorem}
\label{theoremthree} Let $(\mathcal{F},\mathcal{G)}$ be a graph IFS. Let
$\theta\in\Sigma_{\infty}^{\dag},$ $x_{0}\in\mathbb{R}^{M},$ and
$x_{n}=f_{\theta_{n}}(x_{n-1})$ for all $n\in\mathbb{N}$. Then
\[
\bigcap\limits_{k\in\mathbb{N}}\overline{(%
{\displaystyle\bigcup\limits_{n=k}^{\infty}}
x_{n})}\subseteq A
\]
with equality when $\theta\in\Sigma_{\infty}^{\dag}$ is disjunctive.
\end{theorem}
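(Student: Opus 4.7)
The plan is to unroll the orbit as a legitimate graph-IFS composition, derive the inclusion from the contraction property, and then use disjunctivity to match each $a\in A$ to an orbit cluster point. Iterating the recursion yields
\[
x_n = f_{\theta_n}\circ f_{\theta_{n-1}}\circ\cdots\circ f_{\theta_1}(x_0),
\]
and since $\theta\in\Sigma_{\infty}^{\dag}$, the reversed word $\theta_n\theta_{n-1}\cdots\theta_1$ is a valid $\mathcal{G}$-path whose terminal vertex in $\mathcal{G}$ is the initial vertex $\theta^{-}$ of $\theta$ in $\mathcal{G}^{\dag}$. The elementary estimate $\|f_i(x)\|\leq\|f_i(0)\|+\lambda\|x\|$ shows $\{x_n\}$ is bounded, and the set $\bigcap_k\overline{\bigcup_{n\geq k}x_n}$ is precisely the set of cluster points of the sequence.

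For the inclusion $\subseteq A$, I fix any $y^*\in A_{\theta^{-}}$. Applying Theorem \ref{thm:two}(4) to the reversed $\mathcal{G}$-path $\theta_n\cdots\theta_1$ gives $f_{\theta_n}\cdots f_{\theta_1}(y^*)\in A$, and since the composition has Lipschitz constant at most $\lambda^n$,
\[
\|x_n - f_{\theta_n}\cdots f_{\theta_1}(y^*)\|\leq\lambda^n\|x_0-y^*\|\to 0.
\]
Hence $d(x_n,A)\to 0$, and every cluster point of $\{x_n\}$ lies in the closed set $A$.

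For the reverse inclusion under disjunctivity, fix $a\in A$ and write $a=\pi(\sigma)$ for some $\sigma\in\Sigma_{\infty}$. Reversing prefixes of $\sigma$ produces legitimate $\mathcal{G}^{\dag}$-paths $\sigma_k\sigma_{k-1}\cdots\sigma_1$. By disjunctivity, applied after prepending an arbitrarily long block (glued in by a connecting path, whose existence is guaranteed by the strong connectedness of $\mathcal{G}^{\dag}$), each such reversed word appears as a factor of $\theta$ arbitrarily far along, so I may choose $p_k\to\infty$ with $\theta_{p_k+i}=\sigma_{k-i}$ for $i=0,\ldots,k-1$. Unrolling gives
\[
x_{p_k+k-1}=f_{\sigma_1}\circ f_{\sigma_2}\circ\cdots\circ f_{\sigma_k}(x_{p_k-1})=f_{\sigma|k}(x_{p_k-1}),
\]
and since $f_{\sigma|k}$ has Lipschitz constant $\leq\lambda^k$ while $\{x_{p_k-1}\}$ remains bounded, Theorem \ref{thm:two}(1) gives $x_{p_k+k-1}\to\pi(\sigma)=a$, so $a$ is indeed a cluster point of the orbit. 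The main obstacle will be the bookkeeping of graph orientation: the orbit composes $f_{\theta_i}$ in the opposite order to $\theta$, so one must check carefully that the reversal of a $\mathcal{G}$-address $\sigma$ yields a $\mathcal{G}^{\dag}$-path to which disjunctivity applies, and that the prepend-and-connect trick really secures matches at arbitrarily large indices.
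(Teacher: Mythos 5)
Your proof is correct and follows essentially the same route as the paper's: both identify the set $\bigcap_k\overline{\bigcup_{n\geq k}x_n}$ with the image under $\pi$ of the accumulation points of the reversed prefixes $\theta_n\theta_{n-1}\cdots\theta_1$, using contractivity for the inclusion in $A$ and disjunctivity to realize every $\pi(\sigma)$ as a cluster point. The paper compresses this into a one-line appeal to continuity of $\pi$ acting on $\Omega$-limit sets, whereas you supply the explicit estimates (boundedness of the orbit, the $\lambda^n$ shadowing bound, and the prepend-and-connect argument forcing matches at arbitrarily large indices) that the paper leaves implicit.
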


\begin{proof}
$\Omega(\left\{  x_{n}:n\in\mathbb{N}\right\}  \mathbb{)}:=\bigcap
\limits_{k\in\mathbb{N}}\overline{(%
{\displaystyle\bigcup\limits_{n=k}^{\infty}}
x_{n})}$ is an $\Omega-$limit set. Specifically it is the set of accumulation
points of $\left\{  x_{n}:n\in\mathbb{N}\right\}  $ in $\mathbb{R}^{M}$. Since
$\pi$ is continuous%
\begin{align*}
\Omega\left(  \left\{  x_{n}:n\in\mathbb{N}\right\}  \right)   &
=\Omega\left(  \left\{  f_{\theta_{n}\theta_{n-1}\cdots\theta_{1}}(x_{0}%
):n\in\mathbb{N}\right\}  \right) \\
&  =\pi(\Omega\left(  \left\{  \theta_{n}\theta_{n-1}\cdots\theta_{1}%
:n\in\mathbb{N}\right\}  \right)  )
\end{align*}
The $\Omega-$limit set of $\left\{  \theta_{n}\theta_{n-1}\cdots\theta
_{1}:n\in\mathbb{N}\right\}  $ is contained in or equal to $\Sigma_{\infty},$
with equality when $\theta\in\Sigma_{\infty}^{\dag}$ is disjunctive.
\end{proof}

\subsection{Shift maps}

The shift map as defined here acts continuously on $\Sigma\cup\mathcal{V}$ and
commutes with $\pi$ according to Theorem \ref{theoremshift} (4). It is used in
Sections \ref{relsec} and \ref{notrigidsec}.

\begin{definition}
\label{shiftdef}The \textbf{shift map}\textit{\ }$S:\Sigma\cup\mathcal{V}%
\rightarrow\Sigma\cup\mathcal{V}$ is defined by $S(\sigma_{1}\sigma_{2}%
\cdots)=\sigma_{2}\sigma_{3}\cdots$ for all $\sigma\in\Sigma,Sv=v$ for all
$v\in\mathcal{V}$, with the conventions
\[
S^{k}\sigma=\sigma|k=\sigma_{1}^{+}\text{ when }k\geq\left\vert \sigma
\right\vert
\]

\end{definition}

\begin{theorem}
\label{theoremshift}Let $(\mathcal{F},\mathcal{G)}$ be a graph IFS.

(1) $S:\Sigma\cup\mathcal{V}\rightarrow\Sigma\cup\mathcal{V}$ is well-defined

(2) $S(\Sigma\cup\mathcal{V)}=\Sigma\cup\mathcal{V}$

(3) $S:\Sigma\cup\mathcal{V}\rightarrow\Sigma\cup\mathcal{V}$ continuous

(4) $f_{\sigma|k}\circ\pi\circ S^{k}\left(  \sigma\right)  =\pi\left(
\sigma\right)  $ for all $\sigma\in\Sigma$, for all $k\in\mathbb{N}_{0}$
\end{theorem}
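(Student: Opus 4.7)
The plan is to verify the four assertions in turn; parts (1)--(3) are essentially bookkeeping about the graph structure and the product topology on $\Sigma$, while part (4) is the genuine identity and requires unwinding the definitions of $\pi$, $f_{\sigma|k}$, and $S^{k}$ from Definition \ref{attractordef}.

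For (1), if $\sigma=\sigma_{1}\sigma_{2}\cdots\in\Sigma_{\infty}$ then $\sigma_{2}\sigma_{3}\cdots$ is an infinite directed path in $\mathcal{G}$ starting at the terminal vertex of $e_{\sigma_{1}}$, hence lies in $\Sigma_{\infty}$; shifting a finite path of length at least $2$ yields a finite path of strictly smaller length in $\Sigma$; and the stated convention $S^{k}\sigma=\sigma_{1}^{+}\in\mathcal{V}$ handles the remaining cases, together with $Sv=v$ for $v\in\mathcal{V}$. For (2), strong connectedness of $\mathcal{G}$ guarantees every vertex has an incoming edge, so for any $\omega\in\Sigma$ with initial vertex $v$ one picks an edge $e_{i}$ terminating at $v$ and forms $\tau:=i\omega_{1}\omega_{2}\cdots$, giving $S\tau=\omega$; and each $v\in\mathcal{V}$ is its own preimage. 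For (3), if $\sigma$ and $\omega$ agree in their first $n$ coordinates then $S\sigma$ and $S\omega$ agree in their first $n-1$ coordinates, so $d_{\Sigma}(S\sigma,S\omega)\leq 2\,d_{\Sigma}(\sigma,\omega)$ and $S$ is Lipschitz on $\Sigma$; vertices are handled separately as isolated elements of the union $\Sigma\cup\mathcal{V}$.

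The substance is part (4), which I would prove by direct computation in cases. When $\sigma\in\Sigma_{\infty}$, Definition \ref{attractordef}(i) gives
\[
\pi(S^{k}\sigma)=\lim_{n\to\infty}f_{\sigma_{k+1}}f_{\sigma_{k+2}}\cdots f_{\sigma_{k+n}}(x),
\]
independently of $x\in\mathbb{R}^{M}$. Precomposing with the contraction $f_{\sigma|k}=f_{\sigma_{1}}\cdots f_{\sigma_{k}}$ and pulling the limit through using continuity of $f_{\sigma|k}$ yields
\[
f_{\sigma|k}\circ\pi\circ S^{k}(\sigma)=\lim_{n\to\infty}f_{\sigma_{1}}\cdots f_{\sigma_{k+n}}(x)=\pi(\sigma).
\]
When $\sigma\in\Sigma_{\ast}$ and $|\sigma|>k$, the same telescoping combined with Definition \ref{attractordef}(iv) gives $f_{\sigma|k}\circ\pi(S^{k}\sigma)=f_{\sigma_{1}}\cdots f_{\sigma_{k}}\circ f_{\sigma_{k+1}}\cdots f_{\sigma_{|\sigma|}}(A_{\sigma^{+}})=f_{\sigma}(A_{\sigma^{+}})=\pi(\sigma)$.

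The main obstacle, and the case most likely to hide a bookkeeping error, is the boundary case $|\sigma|\leq k$: here the conventions collapse to $S^{k}\sigma=\sigma_{1}^{+}\in\mathcal{V}$ and $f_{\sigma|k}=f_{\sigma_{1}^{+}}=\chi_{A_{\sigma_{1}^{+}}}$, which must be read as the identity restricted to $A_{\sigma_{1}^{+}}$ rather than as a literal indicator. Since Definition \ref{attractordef}(iii) gives $\pi(S^{k}\sigma)=A_{\sigma_{1}^{+}}$, the composition reduces to $A_{\sigma_{1}^{+}}$, which must be matched with $\pi(\sigma)$ under the proper reading of the convention $\sigma_{1}^{+}$ (plausibly a shorthand agreeing with $\sigma^{+}$ in this degenerate regime). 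Pinning down this conventional identification---and verifying that $f_{v}=\chi_{A_{v}}$ is internally consistent with Definition \ref{attractordef}(iv) via $f_{\sigma}(A_{\sigma^{+}})=A_{\sigma^{+}}$ when $\sigma$ reduces to a vertex---is the only delicate point; once that is fixed, the identity is immediate and the theorem follows.
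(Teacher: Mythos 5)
Your proposal is correct and follows essentially the same route as the paper: parts (1)--(3) are routine verifications (the paper dismisses (1) and (2) with ``can be checked'' and handles (3) by noting $\Sigma_{\ast}\cup\mathcal{V}$ is discrete, whereas you supply the incoming-edge argument for surjectivity and the explicit Lipschitz bound $d_{\Sigma}(S\sigma,S\omega)\leq 2\,d_{\Sigma}(\sigma,\omega)$, both of which are fine), and for (4) you use the same telescoping of $f_{\sigma|k}$ through the limit defining $\pi(S^{k}\sigma)$ that the paper uses. The one point you flag but do not resolve --- the regime $\left\vert \sigma\right\vert \leq k$ --- is a genuine wrinkle, but it is a wrinkle in the paper's conventions rather than in your argument: the paper's own computation for $\sigma=\sigma_{1}$, $k=0$ terminates at $\pi(\sigma_{1}^{+})=A_{\sigma_{1}^{+}}$ rather than at $\pi(\sigma_{1})=f_{\sigma_{1}}(A_{\sigma_{1}^{+}})$, so the identity as literally stated only holds verbatim when $k<\left\vert \sigma\right\vert $ or $\sigma\in\Sigma_{\infty}\cup\mathcal{V}$, and your suspicion that the degenerate case rests on a conventional identification is justified.
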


\begin{proof}
(1) and (2) can be checked. (3) $S$ is continuous at every point in
$\Sigma_{\ast}\cup\mathcal{V}$ because this subset of $\Sigma\cup\mathcal{V}$
is discrete and it is mapped onto itself by $S$. A calculation using the
metric $d_{\Sigma}$ proves that $S$ is continuous at every point in
$\Sigma_{\infty}$. (4) If $\sigma=\sigma_{1}$ and $k=0$ then%
\[
f_{\sigma|k}\circ\pi\circ S^{k}\left(  \sigma\right)  =\chi_{A_{^{\sigma
_{1}^{+}}}}\circ\pi\left(  \sigma_{1}^{+}\right)  =\chi_{A_{^{\sigma_{1}^{+}}%
}}(A_{\sigma_{1}^{+}})=\pi\left(  \sigma_{1}^{+}\right)
\]
If $\sigma=\sigma_{1}$ and $k=1,$ then
\[
f_{\sigma|k}\circ\pi\circ S^{k}\left(  \sigma\right)  =f_{\sigma_{1}}\circ
\pi\left(  \sigma_{1}^{+}\right)  =f_{\sigma_{1}}(A_{\sigma_{1}^{+}}%
)=\pi\left(  \sigma_{1}\right)
\]
If $\sigma\in\Sigma_{\infty}$ and $k\in\mathbb{N}$, then
\begin{align*}
f_{\sigma|k}\circ\pi\circ S^{k}\left(  \sigma\right)   &  =f_{\sigma_{1}%
\sigma_{2}\cdots\sigma_{k}}(\pi(\sigma_{k+1}\sigma_{k+2}\cdots))\\
&  =f_{\sigma_{1}\sigma_{2}\cdots\sigma_{k}}(\lim_{m\rightarrow\infty}%
\pi(\sigma_{k+1}\sigma_{k+2}\cdots\sigma_{m}))\\
&  =\lim_{m\rightarrow\infty}\pi(\sigma_{1}\sigma_{2}\cdots\sigma_{m}%
)=\pi(\sigma)
\end{align*}
The remaining cases follow similarly.
\end{proof}

\subsection{\label{subsec:disj}Disjunctive orbits, ergodicity, subshifts of
finite type}

In this Subsection we discuss some stationary measures associated with
dynamics and Markov processes associated with the attractor of a graph IFS
$(\mathcal{F}$,$\mathcal{G)}$. These measures are useful because they assign
all their mass to the set of images of the disjunctive points. Since points of
intersection between tiles in tilings considered in Section \ref{tilemapsec}
are images of non-disjunctive points, we are able to say how these
intersections are small in a measure theoretic sense. We use this material in
Subsection \ref{criticalsec} in relation to the notions of the
\textquotedblleft interior\textquotedblright\ and the \textquotedblleft
boundary\textquotedblright\ of a tile.

Let $T=S|_{\Sigma_{\infty}}$. The dynamical system $T:\Sigma_{\infty
}\rightarrow\Sigma_{\infty}$ is chaotic in the purely topological sense of
Devaney \cite{devaney}: it has a dense set of periodic points, it is
sensitively dependent on initial conditions, and it is topologically
transitive. Topologically transitive means that if $Q$ and $R$ are open
subsets of $\Sigma_{\infty}$, then there is $K\in\mathbb{N}$ so that
\[
Q\cap T^{K}R\neq\varnothing
\]
This is true because the set of disjunctive points in $\Sigma_{\infty}$ is
dense in $\Sigma_{\infty}$ and the orbit under $T$ of any disjunctive point
passes arbitrarily close to any given point in $\Sigma_{\infty}$.

However, $T:\Sigma_{\infty}\rightarrow\Sigma_{\infty}$ also possesses many
invariant normalized Borel measures, each having support $\Sigma_{\infty}$ and
such that $T$ is ergodic with respect to each. An example of such a measure
$\mu_{\mathcal{P}}$ may be constructed by defining a Markov process on
$\Sigma_{\infty}$ using $\mathcal{G}$ and probabilities $\mathcal{P=}\left\{
p_{e}>0:e\in\mathcal{E}\right\}  $ where $%
{\textstyle\sum\limits_{\substack{d^{+}=e^{+} \\d\in\mathcal{E}}}}
p_{d}=1$ for all $e\in\mathcal{E}$. Then $\mu_{\mathcal{P}}$ is the unique
normalized measure on the Borel subsets $\mathcal{B}$ of $\Sigma_{\infty}$
such that
\[
\mu_{\mathcal{P}}(b)=\sum\limits_{e\in\mathcal{E}}p_{e}\mu_{\mathcal{P}%
}(eb\cap\Sigma_{\infty})\text{ for all }b\in\mathcal{B}%
\]
where $eb:=\{\sigma\in\Sigma_{\infty}:\sigma_{1}=e,S\sigma\in b\}$. In
particular, $\mu_{\mathcal{P}}$ is invariant under $T,$ that is
\[
\mu_{\mathcal{P}}(b)=\mu_{\mathcal{P}}(T^{-1}b)\text{ for all }b\in\mathcal{B}%
\]

The key point (1) in Theorem \ref{ergodictheorem} is well known: $T$ is
ergodic with respect to $\mu$. That is, if $Tb=T^{-1}b$ for some
$b\in\mathcal{B}$, then either $\mu_{\mathcal{P}}(b)=0$ or $\mu_{\mathcal{P}%
}(b)=1 $. As a consequence, the set of disjunctive points has full measure,
independent of $\mathcal{P}.$

\begin{theorem}
\label{ergodictheorem}Let $(\mathcal{F},\mathcal{G)}$ be a graph IFS. Let
$(\Sigma_{\infty},\mathcal{B},T,\mu_{\mathcal{P}})$ be the dynamical system
described above. Let $D$ be the disjunctive points in $\Sigma_{\infty}$. Then

(1) Parry\textbf{\ }\cite{parry}: $(\Sigma_{\infty},\mathcal{B},T,\mu
_{\mathcal{P}})$ is ergodic

(2) $D=TD=T^{-1}D\in\mathcal{B}$

(3) $\mu_{\mathcal{P}}(D)=1,$ and $\mu_{\mathcal{P}}(\Sigma_{\infty}\backslash
D)=0$
\end{theorem}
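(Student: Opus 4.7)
The plan is to take the three parts in order, leaning on (1) as a cited fact and doing the real work in (2) and (3).

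For part (1), I would simply defer to Parry's theorem: the Markov chain on $\Sigma_\infty$ with all $p_e > 0$ and primitive transition graph $\mathcal{G}$ gives rise to a mixing (hence ergodic) invariant measure $\mu_{\mathcal{P}}$. So only a citation is needed.

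For part (2), I would first express $D$ as a Borel set. For each finite allowed word $\omega \in \Sigma_\ast$ and each $p \in \mathbb{N}$, the set $C_{\omega,p} := \{\sigma \in \Sigma_\infty : \sigma_p\sigma_{p+1}\cdots\sigma_{p+|\omega|} = \omega\}$ is a cylinder, hence clopen. Since $\Sigma_\ast$ is countable,
\[
D = \bigcap_{\omega \in \Sigma_\ast} \bigcup_{p \in \mathbb{N}} C_{\omega,p}
\]
is a $G_\delta$, so $D \in \mathcal{B}$. For invariance, observe that the set of finite subwords of $\sigma$ coincides with the set of finite subwords of $T\sigma$ together with initial subwords starting at position $1$; concretely, $\omega$ occurs as a subword of $\sigma$ starting at position $p \geq 2$ iff it occurs as a subword of $T\sigma$ starting at position $p-1$. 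From this it is immediate that $T^{-1}D = D$: if $T\sigma \in D$ then $\sigma \in D$ since $\sigma$ contains every subword that $T\sigma$ does; conversely if $\sigma \in D$ then every finite allowed word, and in particular those based at every allowed vertex, appears infinitely often in $\sigma$ (by disjunctivity applied to arbitrarily long repeated patterns), so occurrences persist after dropping $\sigma_1$, giving $T\sigma \in D$. Surjectivity of $T$ on $\Sigma_\infty$ (which follows from strong connectivity) then yields $TD = T(T^{-1}D) = D \cap \Sigma_\infty = D$.

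For part (3), I would apply Birkhoff's pointwise ergodic theorem, which is available by (1). Fix $\omega \in \Sigma_\ast$ and let $[\omega] := \{\sigma \in \Sigma_\infty : \sigma_1\cdots\sigma_{|\omega|}=\omega\}$; since every $p_e > 0$ and $\mathcal{G}$ is strongly connected, $\mu_{\mathcal{P}}([\omega]) > 0$. By Birkhoff applied to $\chi_{[\omega]}$,
\[
\lim_{N\to\infty}\frac{1}{N}\sum_{k=0}^{N-1}\chi_{[\omega]}(T^k\sigma) = \mu_{\mathcal{P}}([\omega]) > 0
\]
for $\mu_{\mathcal{P}}$-a.e.\ $\sigma$. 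In particular, $T^k\sigma \in [\omega]$ for infinitely many $k$, which is exactly the statement that $\omega$ appears as a subword of $\sigma$. Let $B_\omega$ be the full-measure set on which this holds; then $\mu_{\mathcal{P}}(\bigcap_{\omega \in \Sigma_\ast} B_\omega) = 1$ because $\Sigma_\ast$ is countable, and this intersection is contained in $D$. Hence $\mu_{\mathcal{P}}(D) = 1$ and $\mu_{\mathcal{P}}(\Sigma_\infty \setminus D) = 0$.

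The only mild obstacle I anticipate is the bookkeeping in the shift invariance argument in (2): one has to be careful that "disjunctive" refers to subwords at \emph{all} positions $p \geq 1$, and that removing the initial letter (applying $T$) does not destroy any occurrence used to witness disjunctivity. Once this is pinned down, everything else is either a citation (part (1)) or a routine application of Birkhoff (part (3)).
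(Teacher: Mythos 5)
Your proposal is correct and follows essentially the same route as the paper: part (1) is a citation to Parry, part (2) is the routine verification the paper dismisses as ``readily checked'' (your explicit bookkeeping with the shift is fine, granted the small observation that strong connectivity supplies an edge $e$ with $e^{+}=\omega^{-}$ so that every allowed word recurs past position $1$), and part (3) is the same Birkhoff argument, which you phrase as showing $\bigcap_{\omega}B_{\omega}$ has full measure while the paper equivalently covers $\Sigma_{\infty}\setminus D$ by the null sets $D_{x}$ of orbits never entering $c[x]$. Your version even streamlines matters by not needing the paper's preliminary zero--one-law step $\mu(D)\in\{0,1\}$.
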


\begin{proof}
(1) This is a standard result in ergodic theory, see for example \cite{parry}.
(2) It is readily checked that $D\in\mathcal{B}$ and that $T^{-1}D=D=TD$. (3)
Let $\mu=\mu_{\mathcal{P}}.$ Since $(\Sigma_{\infty},\mathcal{B},T,\mu)$ is
ergodic and $D=T^{-1}D,$ it follows that $\mu\left(  D\right)  \in\left\{
0,1\right\}  .$ Also we have
\[
1=\mu\left(  \Sigma_{\infty}\right)  =\mu\left(  D\right)  +\mu\left(
\Sigma_{\infty}\backslash D\right)
\]
So either $\mu\left(  D\right)  =1$ and $\mu\left(  \Sigma_{\infty}\backslash
D\right)  =0$ or vice-versa. Now notice that
\[
\Sigma_{\infty}\backslash D\subset\bigcup\limits_{x\in\Sigma_{\ast}%
\backslash\varnothing}D_{x}%
\]
where $D_{x}=\{\sigma\in\Sigma_{\infty}:S^{n}\sigma\notin c[x]\forall
n\in\mathbb{N}_{0}\}$ where $c[x]$ is the cylinder set%
\[
c[x]:=\left\{  z\in\Sigma_{\infty}:z=xy,y\in\Sigma_{\infty}\right\}  .
\]
In particular%
\[
\mu\left(  \Sigma_{\infty}\backslash D\right)  \leq\sum\limits_{x\in
\Sigma_{\ast}}\mu(D_{x})
\]
But $\mu(D_{x})=0$ as proved next, so $\mu\left(  \Sigma_{\infty}\backslash
D\right)  =0.$ Proof that $\mu(D_{x})=0$: Let $f:\Sigma_{\infty}%
\rightarrow\mathbb{R}$ be defined by $f(\sigma)=0$ if $\sigma\in c[x]$ and
$f(\sigma)=1$ if $\sigma\in\Sigma_{\infty}\backslash c[x]$. Since $f\in
L_{1}(\mu),$ by the ergodic theorem we have%
\[
\int\limits_{\Sigma_{\infty}}fd\mu=\lim_{n\rightarrow\infty}\frac{1}{n}%
\sum\limits_{k=0}^{n-1}f(T^{k}\sigma)\text{ for }\mu\text{-almost all }%
\sigma\in\Sigma_{\infty}.
\]
But $\int fd\mu=1-\mu(c[x])>0$ because the support of $\mu$ is $\Sigma
_{\infty}$, and $\Sigma_{\infty}$ contains a cylinder set disjoint from $c[x]
$ because $\left\vert \mathcal{E}\right\vert \geq2$, and all cylinder sets
have strictly positive measure. Also $f(T^{k}\sigma)=0$ for all $x\in D_{x}$
so%
\[
\lim_{n\rightarrow\infty}\frac{1}{n}\sum\limits_{k=0}^{n-1}f(T^{k}%
\sigma)=0\text{ for all }x\in D_{x}%
\]
so $\int\limits_{\Sigma_{\infty}}fd\mu\neq\lim_{n\rightarrow\infty}\frac{1}%
{n}\sum\limits_{k=0}^{n-1}f(T^{k}\sigma)$ for all $x\in D_{x}$, so $\mu
(D_{x})=0$.
\end{proof}

\section{\label{tilingsec}Tilings}

\subsection{Similitudes}

A \textit{similitude} is an affine transformation $f:{\mathbb{R}}%
^{M}\rightarrow{\mathbb{R}}^{M}$ of the form $f(x)=\lambda\,O(x)+q$, where $O
$ is an orthogonal transformation and $q\in\mathbb{R}^{M}$ is the
translational part of $f(x)$. The real number $\lambda>0$, a measure of the
expansion or contraction of the similitude, is called its \textit{scaling}
\textit{ratio}. An \textit{isometry} is a similitude of unit scaling ratio and
we say that two sets are isometric if they are related by an isometry.

\subsection{\label{TIFSsec}Tiling iterated function systems}

\begin{definition}
\label{oscdef}The graph IFS $(\mathcal{F},\mathcal{G)}$ is said to obey the
\textbf{open set condition} (OSC) if there are non-empty bounded open sets
$\{\mathcal{O}_{v}:v\in\mathcal{V\}}$ such that for all $d,e\in\mathcal{E}$ we
have $f_{e}(\mathcal{O}_{e^{+}})\subset\mathcal{O}_{e^{-}}$ and $f_{e}%
(\mathcal{O}_{e^{+}})\cap f_{d}(\mathcal{O}_{^{d^{+}}})=\varnothing$ whenever
$e^{-}=d^{-}.$
\end{definition}

The OSC for graph IFS is discussed in \cite{boore} and \cite{das}. The paper
\cite{nguyen}, which discusses separation conditions for graph IFS more
generally, notes that many theorems for IFS carry over to graph IFS. We note
that some IFSs which obey the weaker \textit{restricted OSC} can be
transformed into graph IFSs that obey the OSC \cite{das}.

\begin{remark}
Concerning the problem of finding graph IFS that obey either the OSC or the
restricted OSC, we note the impressive digital computer application IFStile
\cite{IFStile}. The system uses exact integer arithmetic over quadratic and
higher order number fields and searches exhaustively over parameter spaces,
using theorems and methodology of Bandt, especially \cite{bandtneighbor1},
using neighbor maps to identify systems that obey the (restricted) OSC.
\end{remark}

\begin{definition}
\label{defONE} Let $\mathcal{F}=\{{\mathbb{R}}^{M};f_{1},f_{2},\cdots,f_{N}%
\}$, with $N\geq2$, be an IFS of contractive similitudes where the scaling
factor of $f_{n}$ is $\lambda_{n}=s^{a_{n}},$ where $0<s<1$ is fixed,
$a_{n}\in\mathbb{N}$ and $\gcd\{a_{1},a_{2},\cdots,a_{N}\}=1$. Let the graph
IFS $(\mathcal{F},\mathcal{G)}$ obey the OSC. Let%
\begin{equation}
A_{v}\cap A_{w}=\varnothing\label{equation01}%
\end{equation}
for all $v\neq w,$ and let the affine span of $A_{v}$ be $\mathbb{R}^{M}$for
all $v\in\mathcal{V}$. Then $\left(  \mathcal{F},\mathcal{G}\right)  $ is
called a\textit{\ }\textbf{tiling iterated function system} (\textit{tiling
IFS). Let }$a_{\max}=\max\{a_{1},a_{2},\cdots,a_{N}\}$.
\end{definition}

The requirement $A_{v}\cap A_{w}=\varnothing$ whenever $v\neq w$ is without
loss of generality in the following sense. By means of changes of coordinates
applied to some of the maps of the IFS, we can move $A_{v}$ to $T_{\upsilon
}A_{v},$ where $T_{\upsilon}:\mathbb{R}^{M}\rightarrow\mathbb{R}^{M}$ is a
translation, while holding $A_{w}$ fixed for all $w\neq v$. To do this$,$ let%
\[
\widetilde{f}_{e}=\left\{
\begin{array}
[c]{cc}%
T_{v}f_{e}T_{v}^{-1} & \text{if }e^{+}=v\text{ and }e^{-}=v\\
T_{v}f_{e} & \text{if }e^{+}\neq v\text{ and }e^{-}=v\\
f_{e}T_{\upsilon}^{-1} & \text{if }e^{+}=v\text{ and }e^{-}\neq v\\
f_{e} & \text{if }e^{+}\neq v\text{ and }e^{-}\neq v
\end{array}
\right.
\]
and let $\widetilde{\mathcal{F}}=\{f_{e}:e\in\mathcal{E\}}$. Then the
components of the attractor of $\left\{  \widetilde{\mathcal{F}}%
,\mathcal{G}\right\}  $ are $\widetilde{A}_{w}=A_{w}$ for $w\neq v$ and
$\widetilde{A}_{v}=T_{v}A_{v}$ for all $v\in\mathcal{V}$. By repeating this
process for each vertex, we can modify the IFS so that different components of
the attractor have empty intersections. Only the relative positions of the
components are changed, while their geometries are unaltered, and
(\ref{equation01}) holds. This being the case, the OSC is simply
\textquotedblleft there are non-empty open sets $\left\{  \mathcal{O}_{v}%
:v\in\mathcal{V}\right\}  $ such that $f_{e}(\mathcal{O}_{e^{+}})\cap
f_{d}(\mathcal{O}_{^{d^{+}}})=\varnothing$ for all $d,e\in\mathcal{V}$ with
$d\neq e$".

\subsection{\label{tilingdefsec}Tilings in this paper}

According to Grunbaum and Sheppard \cite{grun} a tiling is a countable family
of closed sets $\{t_{1},t_{2},...\}$ which cover $\mathbb{R}^{2}$ without gaps
or overlaps. More explicitly, they say that $\mathbb{R}^{2}=\cup\{t_{i}%
:i\in\mathbb{N\}}$ and the sets $t_{i}$ are called tiles. Here we consider
tilings of subsets of $\mathbb{R}^{M}$ such as fractal blow-ups
\cite{strichartz} where tiles are components of attractors of IFSs, which may
have empty interiors, as well as more standard self-similar tilings, such as
tilings of $\mathbb{R}^{2}$ by congruent squares. More precisely we define in
Subsection \ref{tilemapsec} the tiles and tilings we consider. We refer to our
tilings loosely as `fractal tilings'. In Theorem \ref{thm:three} (1) we show
that the intersection of two tiles $t_{1}$ and $t_{2}$ in a fractal tiling is
small both topologically and measure theoretically, relative to the tiles
themselves. This matches the customary situation:\ in a tiling of
$\mathbb{R}^{2}$ by congruent square tiles, tiles have positive
two-dimensional Lebesgue measure, intersections of distinct tiles have zero
two-dimensional Lebesgue measure and are subsets of their topological boundaries.

\subsection{The tiling map\label{tilemapsec}}

Define subsets of $\Sigma_{\ast}$ as follows:%
\begin{align*}
\Omega_{k}  &  =\{\sigma\in\Sigma_{\ast}:\xi^{-}(\sigma)\leq k<\xi
(\sigma)\},\text{ }\Omega_{0}=\mathcal{[}N]\\
\Omega_{k}^{v}  &  =\{\sigma\in\Omega_{k}:\sigma^{-}=v\},\text{ }\Omega
_{0}^{v}=\{\sigma_{1}\in\lbrack N]:\sigma_{1}^{-}=v\}
\end{align*}
for all $k\in\mathbb{N}$, $v\in\mathcal{V}$. Here $\xi:\Sigma_{\ast
}\rightarrow\mathbb{N}_{0}$ is defined for all $\sigma\in\Sigma_{\ast}$ by
\[
\xi(\sigma)=\sum\limits_{k=1}^{\left\vert \sigma\right\vert }a_{\sigma_{k}%
},\text{ }\xi^{-}(\sigma)=\sum\limits_{k=1}^{\left\vert \sigma\right\vert
-1}a_{\sigma_{k}},\text{ }\xi(\varnothing)=\xi^{-}(\varnothing)=0
\]

\begin{definition}
\label{def10} The \textbf{tiling map }$\Pi$ from ${\Sigma}^{\dag}$ to
collections of subsets of $\mathbb{H(R}^{M})$ is defined as follows. For
$\theta\in\Sigma_{\ast}^{\dag},$%
\[
\Pi(\theta)=f_{_{-\theta}}\pi\left(  \Omega_{\xi(\theta)}^{\theta^{+}}\right)
,\text{ }\Pi(\theta|0)=\pi\left(  \Omega_{0}^{\theta^{-}}\right)
\]
and for $\theta\in\Sigma_{\infty}^{\dag}$,%
\[
\Pi(\theta)=\bigcup\limits_{k\in\mathbb{N}}\Pi(\theta|k)
\]
For $\sigma\in\Omega_{\xi(\theta)}^{\theta^{+}}$ and $\theta\in\Sigma^{\dag},$
the set $f_{_{-\theta}}\pi\left(  \sigma\right)  $ is called a \textbf{tile}
and $\Pi(\theta)$ is called a \textbf{tiling}. The \textbf{support} of the
tiling $\Pi(\theta)$ is the union of its tiles, and $\Pi(\theta)$ is said to
tile its support.
\end{definition}

\begin{theorem}
\label{thm:three} Let $(\mathcal{F},\mathcal{G)}$ be a tiling IFS.

\begin{enumerate}
\item For all $\theta\in{\Sigma}_{\infty}^{\dag},$ for each $k\in
\mathbb{N}_{0},$ $\Pi(\theta|k)$ is a well-defined tiling. In particular, if
$t_{1},t_{2}\in\Pi(\theta|k)$ with $t_{1}\neq t_{2}$, then $t_{1}\cap t_{2}$
is small both topologically and measure theoretically, compared to $t_{1}$.
That is, $\mu_{\mathcal{P}}(t_{1}\cap t_{2})=0$ and, if $x=f_{-(\theta|k)}%
(\pi(\sigma))\in t_{1}\cap t_{2}$, for some $\sigma\in\Sigma_{\infty},$ where
$(\theta|k)^{+}=\sigma^{-}$, then $\sigma$ is not disjunctive (i.e. $\sigma
\in\Sigma_{\infty}\backslash\mathcal{D}$).

\item For all $\theta\in{\Sigma}_{\infty}^{\dag}$ the sequence of tilings
$\left\{  \Pi(\theta|k)\right\}  _{k=1}^{\infty}$ obeys
\begin{equation}
\Pi(\theta|0)\subset\Pi(\theta|1)\subset\Pi(\theta|2)\subset\cdots\text{ }
\label{eqthmONE}%
\end{equation}
In particular, $\Pi(\theta)$ is a well-defined tiling for all $\theta
\in{\Sigma}_{\infty}^{\dag}.$

\item $\Pi(\theta)$ is a tiling of a subset of $\mathbb{R}^{M}$ that is
bounded when $\theta\in{\Sigma}_{\ast}^{\dag}$ and unbounded when $\theta
\in{\Sigma}_{\infty}^{\dag}$.

\item For all $\theta\in{\Sigma}_{\infty}^{\dag}$
\begin{equation}
\Pi(\theta)=\lim_{k\rightarrow\infty}f_{_{-(\theta|k)}}(\{\pi\left(
\sigma\right)  :\sigma\in\Omega_{\xi(\theta|k)},\sigma^{-}=\theta^{+}\})
\label{eqn:whole}%
\end{equation}
The limit here is equivalently the union of an increasing sequence (each set
of sets in the sequence is contained in its successor), or the limit with
respect to the metric defined in Section \ref{metricsec}, using the
Hausdorff-Hausdorff metric on a sphere.

\item Any tile $t\in\Pi(\theta)$ can be written $t=s^{m}EA_{v}$ for some
isometry of the form $E=f_{-\theta}f_{\sigma},$ for some $m\in
\{0,1,2,...,a_{\max}-1\},\theta\in{\Sigma}_{\ast}^{\dag}$, $\sigma\in{\Sigma
}_{\ast}$, $\theta^{+}=\sigma^{-}$, $\sigma^{+}=$ $v\in\mathcal{V}$.
\end{enumerate}
\end{theorem}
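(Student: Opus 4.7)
The plan is to prove the five parts in order, with the nesting identity at the heart of parts (1), (2), and (4); parts (3) and (5) follow from direct scaling estimates and a similitude factorization.

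\emph{Parts (1), (2), (4).} The essential observation is that $f_{-(\theta|k+1)} = f_{-(\theta|k)} \circ f_{\theta_{k+1}}^{-1}$. Given any $\tau \in \Omega_{\xi(\theta|k)}^{(\theta|k)^+}$, the concatenation $\theta_{k+1} \cdot \tau$ is a valid path in $\mathcal{G}$ (the reversed edge $\theta_{k+1}$ terminates at $(\theta|k)^+ = \tau^-$), and a direct $\xi$-computation using $\xi(\theta_{k+1}\tau) = a_{\theta_{k+1}} + \xi(\tau)$ places it in $\Omega_{\xi(\theta|k+1)}^{(\theta|k+1)^+}$, with $f_{-(\theta|k)} f_\tau = f_{-(\theta|k+1)} f_{\theta_{k+1}\cdot\tau}$. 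This yields the nesting $\Pi(\theta|k) \subset \Pi(\theta|k+1)$ of part (2) and, by passing to the union, part (4). For part (1), I first prove that $\pi(\Omega_k^v) = \{f_\sigma(A_{\sigma^+}) : \sigma \in \Omega_k^v\}$ tiles $A_v$ in the essential-disjointness sense: the covering is an iteration of $A_v = \bigcup_{e^- = v} f_e(A_{e^+})$, and the OSC forces two distinct pieces to meet only at images of addresses that diverge at some edge. Transporting by the similitude $f_{-(\theta|k)}$ preserves both the covering and the essential disjointness, and Theorem \ref{ergodictheorem}(3) then yields $\mu_{\mathcal{P}}(t_1 \cap t_2) = 0$.

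\emph{Part (3).} The support of $\Pi(\theta|k)$ is $f_{-(\theta|k)}(A_{(\theta|k)^+})$, the image of a set of positive diameter (since each $A_v$ has affine span $\mathbb{R}^M$) under a similitude of scaling $s^{-\xi(\theta|k)}$. For $\theta \in \Sigma_*^\dag$ the support is bounded. For $\theta \in \Sigma_\infty^\dag$, $\xi(\theta|k) \to \infty$ so the diameter grows without bound, making the nested union $\Pi(\theta)$ unbounded.

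\emph{Part (5).} Each tile has the form $t = f_{-\theta_0} f_{\sigma_0}(A_{\sigma_0^+})$ with $\sigma_0 \in \Omega_{\xi(\theta_0)}^{\theta_0^+}$, and this is a similitude applied to $A_{\sigma_0^+}$ with scaling factor $s^{\xi(\sigma_0) - \xi(\theta_0)}$. Writing $\sigma_0 = \sigma' \cdot e$ with $e$ the last edge, and using the bounds $\xi^-(\sigma_0) \leq \xi(\theta_0) < \xi(\sigma_0)$, one sees that after splitting off the contribution of $e$ and repackaging (adjusting the choice of $\theta$ and $\sigma$ so that $\xi(\theta) = \xi(\sigma)$, which is what makes $E := f_{-\theta} f_\sigma$ an isometry), the residual scaling is $s^m$ with $m$ in the stated range $\{0, 1, \ldots, a_{\max} - 1\}$, $v$ is the terminal vertex of the chosen $\sigma$, and $E$ has the advertised form.

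The main obstacle is the disjunctive-preimage claim in part (1). It requires pinning down via the OSC exactly where two addresses of the same point must first diverge, and then showing that any $\sigma$ with $\pi(\sigma)$ in the overlap fails to be disjunctive because its tail is constrained to approach the OSC-boundary and so cannot realize every finite cylinder pattern. Once this step is secured, the remaining assertions reduce to routine checks using the $\xi$-computation and the contractivity of $\mathcal{F}$.
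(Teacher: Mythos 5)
Your treatment of parts (2)--(5) is essentially the paper's: the nesting in (2) is exactly the paper's algebraic computation (the paper writes it as an index shift inside the defining set, you write it as concatenating $\theta_{k+1}$ onto $\tau$ and checking $f_{-(\theta|k)}f_\tau=f_{-(\theta|k+1)}f_{\theta_{k+1}\tau}$, but the $\xi$-bookkeeping is identical), (3) and (4) follow as you say from the divergence of the expansion factor $s^{-\xi(\theta|k)}$ and the increasing union, and your handling of (5) is if anything more careful than the paper's about the range of $m$. The problem is part (1), and you have in effect flagged it yourself: the assertion that any $\sigma\in\Sigma_\infty$ with $\pi(\sigma)\in t_1\cap t_2$ fails to be disjunctive is the substantive content of the theorem, and your closing paragraph replaces a proof of it with a description of what a proof would have to do. The sentence about the tail being ``constrained to approach the OSC-boundary and so cannot realize every finite cylinder pattern'' is not an argument; nothing you have written rules out a disjunctive address landing in the overlap of two distinct tiles.

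The paper closes this gap by routing part (1) through Theorem \ref{theorem5}: the intersection $t_1\cap t_2$ is an isometric copy of a subset of the inner boundary $\widehat{\mathcal{C}}_{\xi(\theta|k)}$, and Theorem \ref{theorem5}(2) asserts $\widehat{\mathcal{C}}_k\subset\pi(\Sigma_\infty\setminus D)$. That inclusion is itself nontrivial: it rests on the chain OSC $\Rightarrow$ the open set can be chosen to meet $A$ (Schief) $\Rightarrow$ $A\setminus\partial A\neq\emptyset$ (Moran) $\Rightarrow$ a disjunctive $\sigma$ with $\pi(\sigma)$ in the critical set would force $\partial A=A$ (Bandt's argument), a contradiction. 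You would need to supply this, or an equivalent, to complete part (1). Note also that $\mu_{\mathcal{P}}(t_1\cap t_2)=0$ does not follow from Theorem \ref{ergodictheorem}(3) alone: that theorem lives on the code space, and the measure conclusion requires first knowing $\pi^{-1}(t_1\cap t_2)\subset\Sigma_\infty\setminus D$ so that the pushforward measure of the intersection vanishes. As written, your measure claim is downstream of the very step you have left open.
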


\begin{proof}
(1) $\Pi(\theta|0)$ is a tiling in the sense described in Section
\ref{tilingdefsec}. $\Pi(\theta|0)=\pi\left(  \Omega_{0}^{\theta^{-}}\right)
=\pi\left(  \left\{  e\in\left[  N\right]  :e^{-}=\theta^{-}\right\}  \right)
=\{f_{e}(A_{e^{+}}):e^{-}=\theta^{-}\}$ has support $A_{e^{-}}$ and its tiles
are supposed to be $\{f_{e}(A_{e^{+}}):e^{-}=\theta^{-}\}$. We need to check
(i) that they are components of attractors of tiling IFSs and (ii) that their
intersections are relatively small. (i) is true because for each
$e\in\mathcal{[}N],$ the set $f_{e}(A_{e^{+}})$ is a component of the
attractor of the tiling IFS $(f_{e}\mathcal{F}f_{e}^{-1},\mathcal{G)}$. (ii)
This is a consequence of the OSC. It follows from Theorem 5 parts (3) and (4).
Similarly, $\Pi(\theta|k)$ and $\Pi(\theta)$ are tilings as in Section
\ref{tilingdefsec}: the tiles are components of attractors of appropriately
shifted versions of the original tiling IFS and their intersections are
isometric to subsets of the critical set of the original tiling IFS. (2) The
proof is algebraic, independent of topology, essentially the same as for the
case where $V=1$ \cite{barnsleyvince}. Briefly,
\begin{align*}
\Pi(\theta|k+1)  &  =\{f_{\theta_{1}}^{-1}...f_{\theta_{k+1}}^{-1}%
f_{\sigma_{1}}..f_{\sigma_{\left\vert \sigma\right\vert }}(A_{\sigma
_{\left\vert \sigma\right\vert }^{+}}):\xi(\sigma_{1}..\sigma_{\left\vert
\sigma\right\vert -1})\leq\xi(\theta_{1}..\theta_{\left\vert \sigma\right\vert
})<\xi(\sigma_{1}..\sigma_{\left\vert \sigma\right\vert })\}\\
&  \supset\{f_{\theta_{1}}^{-1}...f_{\theta_{k}}^{-1}f_{\sigma_{2}}%
..f_{\sigma_{\left\vert \sigma\right\vert }}(A_{\sigma_{\left\vert
\sigma\right\vert }^{+}}):\xi(\sigma_{2}..\sigma_{\left\vert \sigma\right\vert
-1})\leq\xi(\theta_{2}..\theta_{\left\vert \sigma\right\vert })<\xi(\sigma
_{2}..\sigma_{\left\vert \sigma\right\vert })\}\\
&  =\{f_{\theta_{1}}^{-1}...f_{\theta_{k}}^{-1}f_{\sigma_{1}}..f_{\sigma
_{\left\vert \sigma\right\vert -1}}(A_{\sigma_{\left\vert \sigma\right\vert
-1}^{+}}):\xi(\sigma_{1}..\sigma_{\left\vert \sigma\right\vert -2})\leq
\xi(\theta_{1}..\theta_{\left\vert \sigma\right\vert -1})<\xi(\sigma
_{1}..\sigma_{\left\vert \sigma\right\vert -1})\}\\
&  =\Pi(\theta|k)
\end{align*}
(3) For $\theta\in{\Sigma}_{\ast}^{\dag}$, $\Pi(\theta)=f_{_{-\theta}}%
\pi\left(  \Omega_{\xi(\theta)}^{\theta^{+}}\right)  $ so the support of
$\Pi(\theta)$ is $f_{_{-\theta}}(%
{\textstyle\bigcup}
\{\pi(\sigma):\sigma\in\Omega_{\xi(\theta)}^{\theta^{+}})=f_{_{-\theta}%
}A_{\theta^{+}}\}.$ Here $f_{_{-\theta}}$ is a similitude of expansion factor
$\left\vert s\right\vert ^{-\xi(\theta)}$ which diverges with $\left\vert
\theta\right\vert ,$ and $A_{\theta^{+}}$ spans $\mathbb{R}^{M}$. (4) This
follows from (3). (5) For $t\in\Pi(\theta)$ we have $t=f_{-(\theta
|k)}f_{\sigma}(A_{v})$ for some $k,$ $\theta,\sigma$ and $v,$ with $\xi
^{-}(\sigma)\leq\xi(\theta|k)<\xi(\sigma)$. Here $f_{-(\theta|k)}f_{\sigma
}=s^{-m}E$ where $m=\xi(\theta|k)-\xi(\sigma)$ is an integer that lies between
$1$ and $a_{\max}$ and $E$ is an isometry on $\mathbb{R}^{M}$ of the form
$s^{m}f_{-(\theta|k)}f_{\sigma}$ for some $m\in\{1,2,...,a_{\max}\}$.
\end{proof}

\subsection{\label{criticalsec}How tiles in a tiling can intersect: the
dynamical boundary, critical set and inner boundaries}

\begin{definition}
\label{def6}The \textbf{critical set} of the (attractor of the) tiling IFS
$(\mathcal{F},\mathcal{G)}$ is%
\[
\mathcal{C}:\mathcal{=}\bigcup\limits_{\substack{d\neq e\\d,e\in\mathcal{E}%
}}f_{d}(A_{d^{+}})\cap f_{e}(A_{e^{+}})
\]

\end{definition}

\begin{definition}
\label{def7}The \textbf{dynamical boundary }of the (attractor of the) tiling
IFS $(\mathcal{F},\mathcal{G)}$ is $\theta\in\Sigma_{\ast}^{\dag}$%
\[
\partial A:\mathcal{=}\overline{\bigcup\limits_{\theta\in\Sigma_{\ast}^{\dag}%
}f_{-\theta}(A_{\theta^{+}}\cap\mathcal{C})\cap A_{\theta^{-}}}%
\]
where $f_{-\theta}$ is as defined near the start of Subsection
\ref{tilemapsec}.
\end{definition}

If $(\mathcal{F},\mathcal{G)}$ obeys the OSC, then $A\backslash\partial
A\neq\varnothing$. If $A\backslash\partial A\neq\varnothing$, we say that the
tiling IFS is non-overlapping$.$ See the discussions in \cite{bandtneighbor2,
bandt} which also apply to the present situation. We expect that if a tiling
is non-overlapping then it obeys the OSC, but this has not been proven even in
the case $V=1$. We know of no counterexample.

\begin{definition}
\label{def8}The \textbf{inner} \textbf{boundary }of the (attractor of the)
tiling IFS $(\mathcal{F},\mathcal{G)}$ is
\[
\widehat{\mathcal{C}}:\mathcal{=}\bigcup\limits_{\sigma\in\Sigma_{\ast}%
}f_{\sigma}(A_{\sigma^{+}}\cap\mathcal{C})\cap A_{\sigma^{-}}%
\]

\end{definition}

\begin{definition}
\label{def9}The \textbf{inner boundaries to depth }$k\in\mathbb{N}_{0},$ of
the (attractor of the) tiling IFS $(\mathcal{F},\mathcal{G)}$, are
\[
\widehat{\mathcal{C}}_{k}:\mathcal{=}\bigcup\limits_{\sigma\in\Omega_{k}%
}f_{\sigma}(A_{\sigma^{+}}\cap\mathcal{C})\cap A_{\sigma^{-}}\text{ and
}\widehat{\mathcal{C}}_{k}^{v}:\mathcal{=}\bigcup\limits_{\sigma\in\Omega
_{k}^{v}}f_{\sigma}(A_{\sigma^{+}}\cap\mathcal{C})\cap A_{\sigma^{-}},
\]
where $\Omega_{k}$ and $\Omega_{k}^{v}\ $are as defined at the start of
Subsection \ref{tilemapsec}.
\end{definition}

The following theorem tells us that the critical set of a tiling IFS\ is
small, not only topologically, but also measure theoretically, compared to the attractor.

\begin{theorem}
\label{theorem5}Let $\mathcal{(F},\mathcal{G)}$ be a tiling IFS, let
$\mathcal{C}$ be the critical set, $\mathcal{\partial}A$ be the dynamical
boundary, $\widehat{\mathcal{C}}_{k}$ be the inner boundary to depth
$k\in\mathbb{N}$, and let $D$ be the disjunctive points in $\Sigma_{\infty}$.

(1) Bedford\textbf{\ \cite{bedford}} and Mauldin and
Williams\textbf{\ \cite{mauldin}: }The Hausdorff dimension $\mathcal{D}%
_{H}(A)$ of the attractor $A$ of $\mathcal{(F},\mathcal{G)}$\ is the unique
$t\in\lbrack0,M]$ such that the spectral radius of the matrix%
\[
\mathcal{W}_{w,v}(t)=\sum_{\left\{  e\in\mathcal{E}:e^{+}=v,e^{-}=w\right\}
}s^{ta_{e}}%
\]
equals one. Also $0<\mu_{\mathcal{H}}(A)<\infty$ where $\mu_{\mathcal{H}}$ is,
up to a strictly positive constant factor, the Hausdorff measure on $A.$

(2) $\mathcal{\partial}A\cup\widehat{\mathcal{C}}_{k}\subset\pi(\Sigma
_{\infty}\backslash D).$

(3) $(\partial A\cup(\cup_{k\in\mathbb{N}}\widehat{\mathcal{C}}_{k}))\cap
\Pi(D)=\emptyset$, in the relative topology induced on $A$ by the natural
topology of $\mathbb{R}^{M},$ $\partial A$ is closed and $A\backslash\partial
A$ is open.

(4) $\mathcal{\partial}A$ $\mathcal{\cap}A^{\circ}=\emptyset$ where $A^{\circ
}$ is the interior of $A$ as a subset of $\mathbb{R}^{M}$.

(5) $\mathcal{\mu}_{\mathcal{P}}(\pi^{-1}(\mathcal{C)})=0,$ $\mathcal{\mu
}_{\mathcal{P}}(\pi^{-1}(\mathcal{\partial A)})=0,$ $\mathcal{\mu
}_{\mathcal{P}}(\pi^{-1}(\widehat{\mathcal{C}}_{k}\mathcal{)})=0,$ for all
$\mathcal{P}$.

(6) If $%
{\textstyle\sum\limits_{v}}
\mathcal{W}_{w,v}(t)=1$ then $\mu_{\mathcal{H}}=\mu_{\widehat{\mathcal{P}}%
}\circ\pi^{-1}$ where $\mu_{\widehat{\mathcal{P}}}$ is the stationary measure
on $\Sigma_{\infty}$ obtained when $p_{e}=s^{\mathcal{D}_{H}(A)a_{e}}$ in the
Markov process described before Theorem \ref{ergodictheorem}. In this case for
all $k\in\mathbb{N}$
\[
\mu_{\mathcal{H}}(\mathcal{\partial A)=}0,\mu_{\mathcal{H}}(\mathcal{C)=}%
0,\mu_{\mathcal{H}}(\text{ }\widehat{\mathcal{C}}_{k}\mathcal{)=}0
\]

\end{theorem}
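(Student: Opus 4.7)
The plan is to treat the six parts in order, with (3) doing the principal work and the remaining items following as corollaries.

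Part (1) we cite directly from Bedford \cite{bedford} and Mauldin--Williams \cite{mauldin}; the claim $0<\mu_{\mathcal{H}}(A)<\infty$ for the $\mathcal{D}_H(A)$-dimensional Hausdorff measure comes bundled with that theorem.

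For the heart of (3), the key lemma is $\pi(D)\cap\mathcal{C}=\varnothing$. Suppose $\sigma\in D$ with $\pi(\sigma)\in\mathcal{C}$; then $\pi(\sigma)\in f_d(A_{d^+})\cap f_e(A_{e^+})$ for distinct edges $d\neq e$ sharing initial vertex, and one may take $d=\sigma_1$. Using the OSC open sets $\mathcal{O}_v$ together with the standard fact that the boundary of one of two disjoint open sets cannot intersect the other, one deduces $\pi(\sigma)\in\partial f_{\sigma_1}(\mathcal{O}_{\sigma_1^+})$, so that $\pi(S\sigma)\in\partial\mathcal{O}_{\sigma_1^+}$. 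An induction shows $\pi(S^k\sigma)\in\partial\mathcal{O}_{(S^k\sigma)^-}$ for every $k\geq 1$: at each step, if $\pi(S^k\sigma)$ were in the open piece $f_{\sigma_{k+1}}(\mathcal{O}_{\sigma_{k+1}^+})\subset\mathcal{O}_{\sigma_k^+}$ it would contradict its boundary position, so it lies in $\partial f_{\sigma_{k+1}}(\mathcal{O}_{\sigma_{k+1}^+})$, and applying $f_{\sigma_{k+1}}^{-1}$ propagates the boundary condition. On the other hand, by primitivity there is some $\tau\in\Sigma_\infty$ with $\pi(\tau)\in\mathcal{O}_{\tau^-}$, and by continuity of $\pi$ a cylinder $c[\omega]$ with $\pi(c[\omega])\subset\mathcal{O}_{\omega^-}$; disjunctivity of $\sigma$ forces $S^p\sigma\in c[\omega]$ for some $p$, yielding $\pi(S^p\sigma)\in\mathcal{O}_{(S^p\sigma)^-}$, which contradicts the boundary containment. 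Extending the lemma: if $\rho\in D$ with $\pi(\rho)\in\widehat{\mathcal{C}}_k$ then $\pi(\rho)=f_\sigma(y)$ with $y\in\mathcal{C}$, so $\pi(\rho)$ admits the second address $\sigma\mu$; at the first index $k^*$ where $\rho$ and $\sigma\mu$ disagree, $\pi(S^{k^*}\rho)\in\mathcal{C}$ with $S^{k^*}\rho$ still disjunctive, contradicting the lemma. Similarly, if $\pi(\rho)$ lies in the pre-closure $\bigcup_\theta f_{-\theta}(A_{\theta^+}\cap\mathcal{C})\cap A_{\theta^-}$ of $\partial A$, then prepending the reversal $\tau$ of $\theta$ to $\rho$ yields a disjunctive address of a point in $\mathcal{C}$, again a contradiction. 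The outer closure defining $\partial A$ is absorbed by using that $\Sigma_\infty\setminus D$ is $F_\sigma$ and $\pi$ is continuous, so accumulation points on the image side come from accumulation points in $\Sigma_\infty\setminus D$. Thus $\pi^{-1}(\partial A\cup\bigcup_k\widehat{\mathcal{C}}_k)\subseteq\Sigma_\infty\setminus D$. The topological claims in (3) are immediate: $\partial A$ is closed by definition and $A\setminus\partial A$ is its complement in the relative topology. Part (2) follows by projecting (3) forward.

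Part (4) is the geometric statement $\partial A\cap A^\circ=\varnothing$, which we obtain by showing $\partial A$ sits in the topological boundary of $A$ in $\mathbb{R}^M$: the expansions $f_{-\theta}$ push the critical locus away from the OSC open set, so the remainder in $A_{\theta^-}$ must sit on the topological frontier of the attractor component, and passing to the closure preserves this. Part (5) is then immediate from (3) and Theorem \ref{ergodictheorem}(3): $\mu_{\mathcal{P}}(\pi^{-1}(\partial A\cup\widehat{\mathcal{C}}_k\cup\mathcal{C}))\leq \mu_{\mathcal{P}}(\Sigma_\infty\setminus D)=0$. Part (6) is the Mauldin--Williams identification of $\mu_{\mathcal{H}}$ as the push-forward under $\pi$ of the stationary Markov measure with transition weights $p_e=s^{\mathcal{D}_H(A)a_e}$, which is normalized precisely when $\sum_v\mathcal{W}_{w,v}(\mathcal{D}_H(A))=1$; applying (5) with $\mathcal{P}=\widehat{\mathcal{P}}$ then yields the listed Hausdorff vanishing statements.

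The main obstacle is the inductive OSC boundary argument underlying (3). The delicate point is arranging the chain $\pi(S^k\sigma)\in\partial\mathcal{O}_{(S^k\sigma)^-}$ so that the final appeal to disjunctivity --- placing some shift $S^p\sigma$ into a cylinder whose image lies strictly inside an OSC open set --- produces a genuine contradiction, and then propagating this mechanism cleanly via the shift from $\mathcal{C}$ through $\widehat{\mathcal{C}}_k$ and through the closure operation defining $\partial A$.
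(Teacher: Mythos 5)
Your treatment of the open parts of the argument is sound and in fact more self-contained than the paper's, which outsources the core mechanism to citations of Bandt, Schief and Mor\'an: your OSC boundary-propagation induction giving $\pi(S^k\sigma)\in\partial\mathcal{O}_{(S^k\sigma)^-}$ for all $k$, the contradiction via disjunctivity, and the address manipulations reducing $\widehat{\mathcal{C}}_k$ and the pre-closure set $\bigcup_\theta f_{-\theta}(A_{\theta^+}\cap\mathcal{C})\cap A_{\theta^-}$ to the key lemma are all correct. (Two small attribution points: the existence of a cylinder mapped into $\cup_v\mathcal{O}_v$ requires $A\cap\mathcal{O}\neq\varnothing$, which is Schief's OSC-implies-SOSC theorem for similitudes, not a consequence of primitivity; and part (1) needs the state-splitting reduction to at most one edge per ordered vertex pair before Mauldin--Williams applies, which the paper spells out.)

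The genuine gap is the closure step for $\partial A$. You argue that since $\Sigma_\infty\setminus D$ is an $F_\sigma$ and $\pi$ is continuous, ``accumulation points on the image side come from accumulation points in $\Sigma_\infty\setminus D$.'' This does not work: $\pi(\Sigma_\infty\setminus D)$ is only an $F_\sigma$, not closed, and a convergent sequence of non-disjunctive addresses can perfectly well converge to a disjunctive address (the non-disjunctive points are dense), so knowing the pre-closure set $B$ lies in $\pi(\Sigma_\infty\setminus D)$ gives no control over $\overline{B}=\partial A$. Worse, for part (3) you need the stronger statement $\partial A\cap\pi(D)=\varnothing$ (no point of $\partial A$ admits \emph{any} disjunctive address), which your compactness-flavoured argument cannot reach even for $B$ itself without the lemma. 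The missing idea is the dichotomy the paper invokes from Bandt's Proposition 2.2: the set $B$ is invariant under the expansive maps $f_{-e}$ intersected with $A$, so if $x=\pi(\sigma)\in\partial A$ with $\sigma\in D$, then $\partial A$ (being closed) contains the closure of $\{\pi(S^p\sigma):p\in\mathbb{N}_0\}$, which equals $A$ by disjunctivity of $\sigma$ and continuity of $\pi$; this contradicts $A\setminus\partial A\neq\varnothing$, which you already have from Schief plus Mor\'an's theorem that $\mathcal{O}\cap\partial A=\varnothing$. Equivalently, one shows each point of $\pi(D)$ has an open neighbourhood of the form $f_{\sigma|p}(\mathcal{O}_{(S^p\sigma)^-})$ disjoint from $\partial A$. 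Either repair closes the gap; with it, your derivations of (2), (5) and (6) go through as written.
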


\begin{remark}
The dynamical boundary is a subset of the topological boundary of $A,$ viewed
as a subset of $\mathbb{R}^{M}$. In the relative topology of $A$, that is the
topology of $A$ as a metric space in its own right, the boundary is empty and
the dynamical boundary acts as a kind of boundary of the attractor. In
particular, the dynamical boundary, the critical set, and the inner boundary
to any finite depth, are closed sets in the (relative) topology of $A,$ and
their complements, $A\backslash\partial A,$ $A\backslash\mathcal{C}$,
$A\backslash\widehat{\mathcal{C}}_{k}^{(v)},$ are open. Around every
disjunctive point (i.e. image of a disjunctive point in $\Sigma_{\infty}$
under $\pi$) in the attractor there is an open ball that does not meet any of
the sets $\partial A,$ $\mathcal{C}$, $\widehat{\mathcal{C}}_{k}^{(v)}.$ Also,
Baire's theorem tells us that $\widehat{\mathcal{C}}$ does not contain an open
set of $A$. The complements of $\partial A,$ $\mathcal{C}$,
$\widehat{\mathcal{C}}_{k}^{(v)}$and $\widehat{\mathcal{C}}$ provide types of
`interiors' of $A.$ We say that the critical set, the dynamical boundary, and
the inner boundaries are small in a topological sense.
\end{remark}

\begin{proof}
(1) To apply \cite{mauldin} there must be at most one edge of $\mathcal{G}$
directed from vertex $v$ to vertex $w,$ for all $v$ and $w.$ This can always
be contrived, without changing either the dimension or the geometries of the
components of the attractor using the state-spitting technique of \cite{lind},
described here. If $v,w\in\mathcal{V}$ are such that $d=\left\vert
{\textstyle\sum\limits_{\substack{d^{-}=v\\d^{+}=w}}}
1\right\vert >1$, then introduce new vertices $w^{(1)},w^{(2)},...,w^{(d)}$ to
replace $w$, and new components of the attractor $A_{w^{(1)}}=A_{w^{(2)}%
}=...=A_{w^{(d)}}$ all equal to $A_{w}$, and replace the $d$ outgoing edges
from $v$ to $w$ by one outgoing edge to each of the new vertices. All other
edges associated with the vertex $w$, both inward and outward pointing, are
replaced by copies of them at each of the duplicated vertices. Likewise the
maps associated with the new edges are duplicates of the originals. Now
translate the coincident attractors so that they have empty intersections and
modify the maps accordingly, as described following Definition \ref{defONE},
relating them to the original ones by isometric changes of coordinates.
Repeating this process in connection with every ordered pair of vertices
ensures that there is at most one outward pointing edge from vertex $v$ to
vertex $w,$ for all $v$ and $w$ in $\mathcal{G}$. This reduces the present
situation to that in \cite{mauldin}, who makes this assumption. Clearly the
dimension of the attractor is unaltered. We also have $0<\mu_{H}(A)<\infty$ by
\cite[Theorem 3]{mauldin}. Note that \cite[Theorem 3]{mauldin} requires a
different separation condition than the OSC, but both \cite[Theorem
2.1]{boore} and \cite{das} refer to \cite[Theorem 3]{mauldin} as though the
two conditions are equivalent, and we have assumed that this is true. (2) This
is the generalization to the graph-directed case of the definitions and
argument in \cite[Proposition 2.2]{bandt}. We present the proof in parts
(a)\ and (b) for the case $\mathcal{V}=1.$ The proof is carries over to the
tiling IFS case. We focus on showing that $\mathcal{C}\subset\pi
(\Sigma_{\infty}\backslash D).$ The other containments follow similarly. (a)
The OSC\ implies, for similitudes, the open set $\mathcal{O}=\bigcup
\limits_{v\in\mathcal{V}}\mathcal{O}_{v}$ can be chosen so that $\mathcal{O}%
\cap A\neq\emptyset$ \cite{schief}, which implies $A\backslash\partial
A\neq\emptyset$ because in this case $\mathcal{O\cap}\partial A=\emptyset$ by
\cite[Theorem 2.3 via (iii) implies (i) implies (ii)]{moran}$.$ (b)
$A\backslash\partial A\neq\emptyset$ implies $\partial A\cap\pi(D)=\varnothing
$ because if $x=\pi(\sigma)\in\mathcal{C}$ with $\sigma\in D$ then $\partial
A=A$ as in \cite[Proposition 2.2]{bandt} Prop 2.2. It follows that
$\mathcal{C}\subset\pi(\Sigma_{\infty}\backslash D)$. (3) This follows from
(2) and $\partial A\cap\pi(D)=\varnothing$. (4) This is \cite[Proposition
2.1]{bandt} carried over to the tiling IFS case, using the non-overlappingness
of $A$, namely $A\backslash\partial A\neq\emptyset.$ (5) This follows from (2)
and Theorem \ref{ergodictheorem} part (3). (6) Using the thermodynamic
formalism \cite{bedford} and the assumption that $%
{\textstyle\sum\limits_{v}}
\mathcal{W}_{w,v}(t)=1$, we find that $\mu_{H}=\mu_{\widehat{\mathcal{P}}%
}\circ\pi^{-1}$ is, up to a positive multiplicative constant, the Hausdorff
measure obtained when
\[
p_{e}=s^{\mathcal{D}_{H}(A)a_{e}}/\sum\limits_{d^{+}=e^{+}}s^{\mathcal{D}%
_{H}(A)a_{d}}%
\]

\end{proof}

\section{\label{continuitysec}Continuity properties of $\Pi:{\Sigma}^{\dag
}\rightarrow\mathbb{T}$.}

\subsection{\label{metricsec}A convenient compact tiling space}

Let
\[
\mathbb{T=}\left\{  \Pi(\theta):\theta\in\Sigma^{\dag}\right\}
\]

Let $\rho:\mathbb{R}^{M}\rightarrow\mathbb{S}^{M}$ be the usual $M$%
-dimensional stereographic projection to the $M$-sphere, obtained by
positioning $\mathbb{S}^{M}$ tangent to $\mathbb{R}^{M}$ at the origin. Let
$\mathbb{H(S}^{M})$ be the non-empty closed (w.r.t. the usual topology on
$\mathbb{S}^{M}$) subsets of $\mathbb{S}^{M}.$ Let $d_{\mathbb{H(S}^{M})}$ be
the Hausdorff distance with respect to the round metric on $\mathbb{S}^{M}$,
so that $(\mathbb{H(S}^{M}),d_{\mathbb{H(S}^{M})})$ is a compact metric space.
Let $\mathbb{H(H(S}^{M}))$ be the nonempty compact subsets of $(\mathbb{H(S}%
^{M}),d_{\mathbb{H(S}^{M})})$, and let $d_{\mathbb{H(H(S}^{M}))}$ be the
associated Hausdorff metric. Then $(\mathbb{H(H(S}^{M})),d_{\mathbb{H(H(S}%
^{M}))})$ is a compact metric space. Finally, define a metric $d_{\mathbb{T}}$
on $\mathbb{T}$ by%
\[
d_{\mathbb{T}}(T_{1},T_{2})=d_{\mathbb{H(H(S}^{M}))}(\rho\left(  T_{1}\right)
,\rho\left(  T_{2}\right)  )
\]
for all $T_{1},T_{2}\in\mathbb{T}$.

\begin{theorem}
\label{compactthm}$\mathbb{(T},d_{\mathbb{T}})$ is a compact metric space.
\end{theorem}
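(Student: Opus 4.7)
The plan is to realize $(\mathbb{T},d_{\mathbb{T}})$ as a closed subspace of the compact metric space $(\mathbb{H(H(S}^{M})),d_{\mathbb{H(H(S}^{M}))})$. Since $\rho$ is a homeomorphism of $\mathbb{R}^{M}$ onto $\mathbb{S}^{M}$ minus the north pole, the assignment $T\mapsto\rho(T):=\{\rho(t):t\in T\}$ is injective on collections of compact sets, so $d_{\mathbb{T}}$ inherits the metric axioms by pullback. The compactness of $\mathbb{T}$ therefore reduces to showing that $\mathbb{T}$ is a closed subset of $\mathbb{H(H(S}^{M}))$.

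To establish closedness I would argue sequentially. Suppose $\Pi(\theta^{(n)})\to T^{\ast}$ in $d_{\mathbb{T}}$ with $\theta^{(n)}\in\Sigma^{\dag}$ and $T^{\ast}\in\mathbb{H(H(S}^{M}))$. Because $(\Sigma^{\dag},d_{\Sigma})$ is compact (Subsection \ref{sec:two:one}), pass to a subsequence along which $\theta^{(n)}\to\theta$ for some $\theta\in\Sigma^{\dag}$, and aim to identify $\rho(\Pi(\theta))=T^{\ast}$. For the inclusion $\rho(\Pi(\theta))\subseteq T^{\ast}$, note that for each finite $m\leq\left\vert \theta\right\vert $ the convergence $\theta^{(n)}\to\theta$ in $d_{\Sigma}$ forces $\theta^{(n)}|m=\theta|m$ for all sufficiently large $n$, so by Definition \ref{def10} together with Theorem \ref{thm:three}(2) we have $\Pi(\theta|m)=\Pi(\theta^{(n)}|m)\subseteq\Pi(\theta^{(n)})$ eventually; every tile of $\Pi(\theta|m)$ therefore persists verbatim into $T^{\ast}$, and taking the union over $m$ yields the claim. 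For the reverse inclusion $T^{\ast}\subseteq\rho(\Pi(\theta))$, any element of $T^{\ast}$ is a Hausdorff limit in $\mathbb{H(S}^{M})$ of images $\rho(t_{n})$ with $t_{n}\in\Pi(\theta^{(n)})$; using the normal form $t_{n}=s^{m_{n}}E_{n}A_{v_{n}}$ from Theorem \ref{thm:three}(5), a family that stays within a bounded region of $\mathbb{R}^{M}$ is relatively compact (with $m_{n}$ ranging over a finite set, $v_{n}\in\mathcal{V}$, and $E_{n}$ in a compact set of isometries), so along a further subsequence $t_{n}\to t_{\infty}$, and the matching of initial segments of $\theta^{(n)}$ with $\theta$ identifies $t_{\infty}$ as a tile of $\Pi(\theta)$; families escaping to infinity in $\mathbb{R}^{M}$ collapse under $\rho$ to the single north pole, which is already a limit point of $\rho(\Pi(\theta))$ because the support of $\Pi(\theta)$ is unbounded by Theorem \ref{thm:three}(3).

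The main obstacle is the second inclusion, since $\Pi$ is not continuous in general: one must rule out spurious tiles in $T^{\ast}$ that are absent from $\Pi(\theta)$. The stereographic compactification is engineered to neutralize escape-to-infinity by merging all such behaviour into the single limit point at the north pole, while Theorem \ref{thm:three}(5) supplies the discrete-plus-compact parametrization of tile shapes needed to control limits of tiles remaining in a bounded region. Combining these reductions with the initial-segment matching along $\theta^{(n)}\to\theta$ constitutes the technical heart of the argument.
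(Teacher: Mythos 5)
Your overall strategy --- pull the metric back through $\rho$ and show that $\rho(\mathbb{T})$ is closed in the compact space $(\mathbb{H}(\mathbb{H}(\mathbb{S}^{M})),d_{\mathbb{H}(\mathbb{H}(\mathbb{S}^{M}))})$ --- is genuinely different from the paper's proof, which never invokes compactness of $\Sigma^{\dag}$ or the map $\Pi$ at all: the paper takes an arbitrary sequence of tilings, uses the uniform bound on tile diameters to extract convergence of the patches meeting each ball $B_{R}(O)$, notes that $d_{\mathbb{T}}$ makes behaviour outside $B_{R}(O)$ negligible as $R\rightarrow\infty$, and finishes with a diagonal extraction. Your reduction to closedness is legitimate, and your first inclusion $\rho(\Pi(\theta))\subseteq T^{\ast}$ is exactly the upper semi-continuity argument used for Theorem \ref{ctytheorem}.

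The gap is the second inclusion, and it is not a technicality: the claim $T^{\ast}\subseteq\rho(\Pi(\theta))$ with $\theta=\lim\theta^{(n)}$ is false in general. Theorem \ref{ctytheorem} asserts only upper semi-continuity of $\Pi$, and Example \ref{Example01} is an explicit counterexample: for $f_{1}(x)=x/2$, $f_{2}(x)=(x+1)/2$, the addresses $\theta^{(k)}=11\cdots 12$ converge to $\overline{1}$ in $d_{\Sigma}$, yet the tilings $\Pi(\theta^{(k)})$ converge to a tiling of $[-1,\infty)$, while $\Pi(\overline{1})$ tiles only $[0,\infty)$. The extra tiles of $T^{\ast}$ are not spurious artifacts to be excluded by compactness of the isometry family or by matching initial segments; they are genuinely present, because a tile $t_{n}\in\Pi(\theta^{(n)})\setminus\Pi(\theta^{(n)}|m_{n})$, where $m_{n}$ is the length of agreement between $\theta^{(n)}$ and $\theta$, is not controlled by the common prefix and can converge to a tile lying outside $\Pi(\theta)$. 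What closedness actually requires is that $T^{\ast}=\rho(\Pi(\psi))$ for some possibly \emph{different} address $\psi\in\Sigma^{\dag}$ (in the example, $T^{\ast}$ is realized by an address of the form $2\overline{1}$, not by $\overline{1}$), and producing such a $\psi$ from the data of the sequence is the real content of the theorem. Your final paragraph correctly names this obstacle, but the tools you propose (Theorem \ref{thm:three}(5) plus prefix matching) do not overcome it, so the proof as written does not close.
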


\begin{proof}
We make these comments. There is an absolute upper bound to the diameter of
all tiles in all tilings. Every ball $B_{R}(O),$ the ball centered at the
origin of radius $R$, meets at least one tile of any tiling $T$. The
projection of the collection of sets obtained by intersecting each tiling in
$\mathbb{T}$ with $B_{R}(O)$ and keeping the subset of each set that meets
$B_{R}(O)$ is a compact metric space with respect to $d_{\mathbb{T}}$. Note
that $d_{\mathbb{T}}(T_{1}\cup B_{R}^{C}(O),T_{2}\cup B_{R}^{C}(O))\rightarrow
0$ as $R\rightarrow\infty,$ where $B_{R}^{C}(O)=\mathbb{R}^{M}\backslash
B_{R}(O),$ for any pair of tilings $T_{1},T_{2}\in\mathbb{T}$. A diagonal
argument may be used to prove the theorem, as follows. Any $T\in\mathbb{T}$
can be expressed as an infinite sequence of tiles, with possible repetitions
of tiles. Let $(T_{k})$ be a sequence of tilings. Let $(T_{k_{1}})$ be a
subsequence of $(T_{k})$ that converges inside (the projection of) $B_{1}(O).$
Recursively, let $(T_{k_{n+1}})$ be a subsequence of $(T_{k_{n}})$ that
converges inside $B_{n+1}(O)$. Then the sequence of tilings $(T_{k_{n},n})$
converges to a tiling, with respect to the metric $d_{\mathbb{T}}$.
\end{proof}

See also for example \cite{anderson, bedfordScene, sadun, solomyak, wicks}
where related metrics and topologies are defined. The Hausdorff-Gromov metric
applied to collections of subsets of the $M$-sphere might also be used to
measure distances between tilings. This does not suit the present setting,
where non-trivial isometries of tilings are distinguished.

\subsection{Continuity}

The following definition generalizes a related concept for the case where $A$
is a topological disk and $\left\vert \mathcal{V}\right\vert =1$, see
\cite{tilings}. For $\theta\in{\Sigma}_{\infty}^{\dag}$ define $I(\theta
)\subset{\Sigma}_{\infty}$ to be the set of limit points of $\{\theta
_{l+m}\theta_{l+m-1}...\theta_{m+1}:l,m\in\mathbb{N}\}$. Define for all
$v\in\mathcal{V}$
\[
H_{v}:=\cup\{f_{-\theta}f_{\sigma}(A_{\sigma^{+}}):\theta^{+}=\sigma
^{-}=v,\theta\in\Sigma_{\ast}^{\dag},\sigma\in\Sigma_{\ast},\theta_{\left\vert
\theta\right\vert }\neq\sigma_{1}\}
\]
$H_{v}$ is the union of all images of $A_{w}$ under the stated neighbor maps,
for all $w\in\mathcal{V}$, namely the maps $f_{-\theta}f_{\sigma}$ in the
definition of $H_{v}$. It is a generalization of the same definition in
the\ case $V=1$, \cite{bandtneighbor1, bandtneighbor2, bandtneighbor3}. Define
the \textbf{central open sets} to be%
\[
O_{v}=\{x\in\mathbb{R}^{M}:d(x,A_{v})<d(x,H_{v})\}
\]
It is the case that $\{O_{v}:v\in\mathcal{V}$\} obeys the open set condition
and \textquotedblleft$(\mathcal{F},\mathcal{G)}$ obeys the OSC" if and only if
\textquotedblleft$A_{v}$ is not contained in $\overline{H}_{v}$ for all
$v\in\mathcal{V}$ "$.$ This follows from the argument in \cite{bandtneighbor2}
generalized in obvious ways, for example to ensure that chains of functions of
the form $f_{-\theta}f_{\sigma}$ are consistent with $\mathcal{G}$.

Call $\theta\in{\Sigma}_{\infty}^{\dag}$ \textbf{reversible} if
\[
{\Sigma}_{rev}^{\dag}:=I(\theta)\cap\{\sigma\in{\Sigma}_{\infty}:\pi
(\sigma)\subset\cup_{v}O_{v}\}\neq\varnothing\text{.}%
\]
Equivalently, $\theta\in{\Sigma}_{rev}^{\dag}$ if the following holds: there
exists $\sigma\in{\Sigma}_{\infty}$ with $\pi(\sigma)\in\cup_{v}O_{v}$ such
that, for all $L,M\in\mathbb{N}$ there is $m\geq M$ so that
\[
\sigma_{1}\sigma_{2}...\sigma_{L}=\theta_{m+L}\theta_{m+L-1}...\theta_{m+1}%
\]
Equivalently, in terms of the notion of \textquotedblleft full" words, see
\cite{tilings}, $\theta\in{\Sigma}_{rev}^{\dag}$ if there is a nonempty
compact set $A^{\prime}\subset\cup_{v}O_{v}$ such that for any positive
integer $M$ there exists $n>m\geq M$ so that
\[
f_{\theta_{n}}f_{\theta_{n-1}}...f_{\theta_{m+1}}(A_{\theta_{m+1}^{+}})\subset
A^{\prime}\text{.}%
\]

\begin{theorem}
\label{ctytheorem}Let $(\mathcal{F},\mathcal{G)}$ be a tiling IFS.
\textit{Then}%
\[
\Pi|_{{\Sigma}_{rev}^{\dag}}:{\Sigma}_{rev}^{\dag}\subset{\Sigma}_{\infty
}^{\dag}\rightarrow\mathbb{T}%
\]
is continuous and
\[
\Pi:{\Sigma}_{\infty}^{\dag}\rightarrow\mathbb{T}%
\]
is upper semi-continuous in this sense: if $\Pi(\theta^{(n)})$ is a sequence
of tilings that converges to a tiling $T\in\mathbb{T}$ as $\theta^{(n)}$
converges to $\theta\in{\Sigma}_{\infty}^{\dag},$ then $\Pi(\theta)\subset T$.
\end{theorem}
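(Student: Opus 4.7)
The plan is to handle the two statements separately, reducing first to the elementary piece and then using reversibility to upgrade from containment to equality.

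\medskip

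\textbf{Step 1 (Upper semi-continuity).} Assume $\theta^{(n)}\to\theta$ in $({\Sigma}_\infty^\dag,d_\Sigma)$ and $\Pi(\theta^{(n)})\to T$ in $(\mathbb{T},d_{\mathbb{T}})$. The key observation is that convergence in $d_\Sigma$ says that for every fixed $k\in\mathbb{N}$ there is $n_0=n_0(k)$ with $\theta^{(n)}|k=\theta|k$ for all $n\geq n_0$. By the nesting in Theorem \ref{thm:three}(2), this yields
\[
\Pi(\theta|k)\;=\;\Pi(\theta^{(n)}|k)\;\subset\;\Pi(\theta^{(n)}),\qquad n\geq n_0(k).
\]
Since $\theta|k\in\Sigma_\ast^\dag$, the set $\Pi(\theta|k)$ is a finite collection of bounded tiles in $\mathbb{H}(\mathbb{R}^M)$; such a ``frozen'' finite sub-collection appearing in every sufficiently large term must survive under the Hausdorff--Hausdorff convergence defining $d_{\mathbb{T}}$, so $\Pi(\theta|k)\subset T$ for each $k$. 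Taking the union, $\Pi(\theta)=\bigcup_k \Pi(\theta|k)\subset T$.

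\medskip

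\textbf{Step 2 (Reducing continuity to a reverse inclusion).} Because $\mathbb{T}$ is compact by Theorem \ref{compactthm}, continuity of $\Pi$ at a reversible $\theta$ is equivalent to the statement that every subsequential limit $T$ of $\Pi(\theta^{(n)})$ for $\theta^{(n)}\to\theta$ satisfies $T=\Pi(\theta)$. Step 1 gives $\Pi(\theta)\subset T$, so it suffices to prove $T\subset\Pi(\theta)$. Localising via $d_\mathbb{T}$, it is enough to show: for every $R>0$, every tile of $T$ meeting $B_R(O)$ coincides with a tile of $\Pi(\theta)$.

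\medskip

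\textbf{Step 3 (Exploiting reversibility).} Use the ``full word'' form: there exists a compact $A'\subset\bigcup_v O_v$ and arbitrarily large indices $n>m$ with
\[
f_{\theta_n}f_{\theta_{n-1}}\cdots f_{\theta_{m+1}}(A_{\theta_{m+1}^+})\subset A'.
\]
Applying $f_{-\theta|m}$ to both sides and unravelling gives a tile (or union of tiles) of $\Pi(\theta|n)$ whose preimage under a suitable scaling lies inside the central open sets $\bigcup_v O_v$. The defining property of $O_v$ is that a set lying in $O_v$ is separated from every neighbour map image $f_{-\varphi}f_\sigma(A_w)$ with $\varphi_{|\varphi|}\neq\sigma_1$; carrying this through, the corresponding region of $\Pi(\theta|n)$ is insulated from all tiles coming from any ``continuation'' beyond depth $n$, and agrees with the tiles of $\Pi(\theta|k)$ for every $k\geq n$. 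Because $f_{-\theta|m}$ expands by $s^{-\xi(\theta|m)}\to\infty$ along the reversible subsequence, this insulated region eventually contains $B_R(O)$. This is the step that uses the full generalisation of Bandt's central open set argument \cite{bandtneighbor1,bandtneighbor2} to graph IFS and is the principal technical obstacle, since one must verify that the neighbour-map analysis respects the admissibility constraints imposed by $\mathcal{G}$ and its reversal $\mathcal{G}^\dag$.

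\medskip

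\textbf{Step 4 (Concluding continuity).} Fix $m$ as in Step 3 so that $B_R(O)$ lies in the insulated region of $\Pi(\theta|m)$. For all $n$ large enough, $\theta^{(n)}|m=\theta|m$, so the tiles of $\Pi(\theta^{(n)})$ that meet $B_R(O)$ are forced, by the insulation property, to be exactly the tiles of $\Pi(\theta|m)\subset\Pi(\theta)$. Passing to the limit in $d_\mathbb{T}$ gives that the tiles of $T$ meeting $B_R(O)$ are tiles of $\Pi(\theta)$. Since $R$ was arbitrary, $T\subset\Pi(\theta)$, completing the proof. The only non-routine part is Step 3, and its difficulty is exactly the graph-directed generalisation of the $V=1$ argument in \cite{tilings}; everything else is bookkeeping on nested inclusions and the compactness of $(\mathbb{T},d_{\mathbb{T}})$.
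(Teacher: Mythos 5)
Your argument is essentially the paper's own: the identical nesting/freezing argument for upper semi-continuity, and, for continuity on ${\Sigma}_{rev}^{\dag}$, the same central-open-set mechanism (which the paper packages as the auxiliary tilings $\Xi(\theta)$) showing that reversibility insulates an early-depth region from all tiles added at later depths, so the tiling stabilizes on bounded sets. The one imprecise phrase is ``this insulated region eventually contains $B_R(O)$'' --- what the argument actually yields is that every tile added beyond depth $n$ lies in $f_{-(\theta|n)}(\overline{H}_{v})$, hence at distance of order $s^{-\xi(\theta|n)}$ from a fixed reference point of the support, which suffices because remote tiles contribute negligibly to $d_{\mathbb{T}}$; this is the same level of informality as the paper's own ``the new tiles are located further and further away from the origin as $k$ increases.''
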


\begin{proof}
Proof of upper semi-continuity: let $\{\theta^{(n)}\}$ be a sequence of points
in ${\Sigma}_{\infty}^{\dag}$ that converges to $\theta$ and such that
$\lim\Pi(\theta^{(n)})=T$ with respect to the tiling metric. Let $m$ be given.
Then there is $l_{m}$ so that for all $n\geq l_{m}$ we have $\theta
|m=\theta^{(n)}|m$ and hence $\Pi(\theta|m)=\Pi(\theta^{(n)}|m)\subset
\Pi(\theta^{(n)}).$ Hence we have $\Pi(\theta|m)\subset\underset{n\rightarrow
\infty}{\lim}\Pi(\theta^{(n)})$ and hence, since this is true for all $m,$
$\Pi(\theta)\subset\underset{n\rightarrow\infty}{\lim}\Pi(\theta^{(n)})$.
Proof that $\Pi|_{{\Sigma}_{rev}^{\dag}}:{\Sigma}_{rev}^{\dag}\rightarrow
\mathbb{T}$ is continuous involves blow-ups \cite{strichartz} of central opens
sets. Analogously to the definition of $\Pi,$ define a mapping $\Xi$ from
${\Sigma}^{\dag}$ to subsets of $\mathbb{H(R}^{M})$ as follows. For $\theta
\in\Sigma_{\ast}^{\dag},\theta\neq\varnothing,$%
\[
\Xi(\theta):=\{f_{_{-\theta}}f_{\sigma}(\overline{O_{\sigma^{+}}}):\sigma
\in\Omega_{\xi(\theta)}^{\theta^{+}}\},
\]
and for $\theta\in\Sigma_{\infty}^{\dag}$
\[
\Xi(\theta):=\bigcup\limits_{k\in\mathbb{N}}\Xi(\theta|k).
\]
As is the case for $\Pi,$ increasing families of sets are obtained: each
collection $\Xi(\theta)$ comprises a covering by compact sets of a subset of
$\mathbb{R}^{M}$, the subset being bounded when $\theta\in{\Sigma}_{\ast
}^{\dag}$ and unbounded when $\theta\in{\Sigma}_{\infty}^{\dag}$. For all
$\theta\in{\Sigma}_{\infty}^{\dag}$ the sequence of collections of sets
$\left\{  \Xi(\theta|k)\right\}  _{k=1}^{\infty}$ is nested according to
\[
\Xi(\theta|1)\subset\Xi(\theta|2)\subset\Xi(\theta|3)\subset\cdots\text{ .}%
\]
and we have $\{\Xi(\theta|k)\}$ converges to $\Xi(\theta)$ in the metric
introduced in Section \ref{metricsec}. We refer to $\Xi(\theta)$ as a
\textbf{central open set tiling}. (Examples of such tilings are illustrated in
Figures \ref{fige002} and 5.) In particular, when reversible, the new tiles,
those in $\Xi(\theta|k+1)\backslash$ $\Xi(\theta|k),$ are located further and
further away from the origin as $k$ increases. The result follows.
\end{proof}

\begin{example}
\label{Example01}Let $\mathcal{F}=\{\mathbb{R};f_{1}(x)=x/2,f_{2}%
(x)=(x+1)/2\}$, and consider the sequence of tilings $\{\Pi
(111..(k-times)...12):k\in\mathbb{N\}}$. This sequence converges to a tiling
of $[-1,\infty)$, whilst the sequence of tilings $\{\Pi
(111..(k-times)...11):k\in\mathbb{N\}}$ converges to a tiling of $[0,\infty]$.
\end{example}

\begin{example}
\label{Example02}Example of a central open set tiling. See Figures
\ref{fige002} and \ref{fige003}. In this case the maps of the IFS are, in
complex number representation
\begin{align*}
f_{1}(z)  &  =\frac{z}{2},f_{2}(z)=\frac{1}{2}e^{-\frac{2\pi i}{3}}%
(z-i)-\frac{1}{4}\left(  3-\sqrt{3}i\right)  ,\\
f_{3}(z)  &  =\frac{1}{2}e^{\frac{2\pi i}{3}}(z-i)+\frac{1}{4}\left(
3+\sqrt{3}i\right)  ,f_{4}(z)=\frac{1}{2}z+\frac{i}{2},\\
f_{5}(z)  &  =\frac{1}{2}e^{-\frac{2\pi i}{3}}z+i(1+\frac{\sqrt{3}}{4}%
)-\frac{3}{4},f_{6}(z)=\frac{1}{2}e^{\frac{2\pi i}{3}}z+i(1+\frac{\sqrt{3}}%
{4})+\frac{3}{4}%
\end{align*}
The tilings illustrated in Figure \ref{fige002} are $\Pi(1111...)\ $and
$\Xi(1111...)$.
\end{example}

\begin{figure}[ptb]
\centering
\includegraphics[
height=2.3246in,
width=2.9772in
]{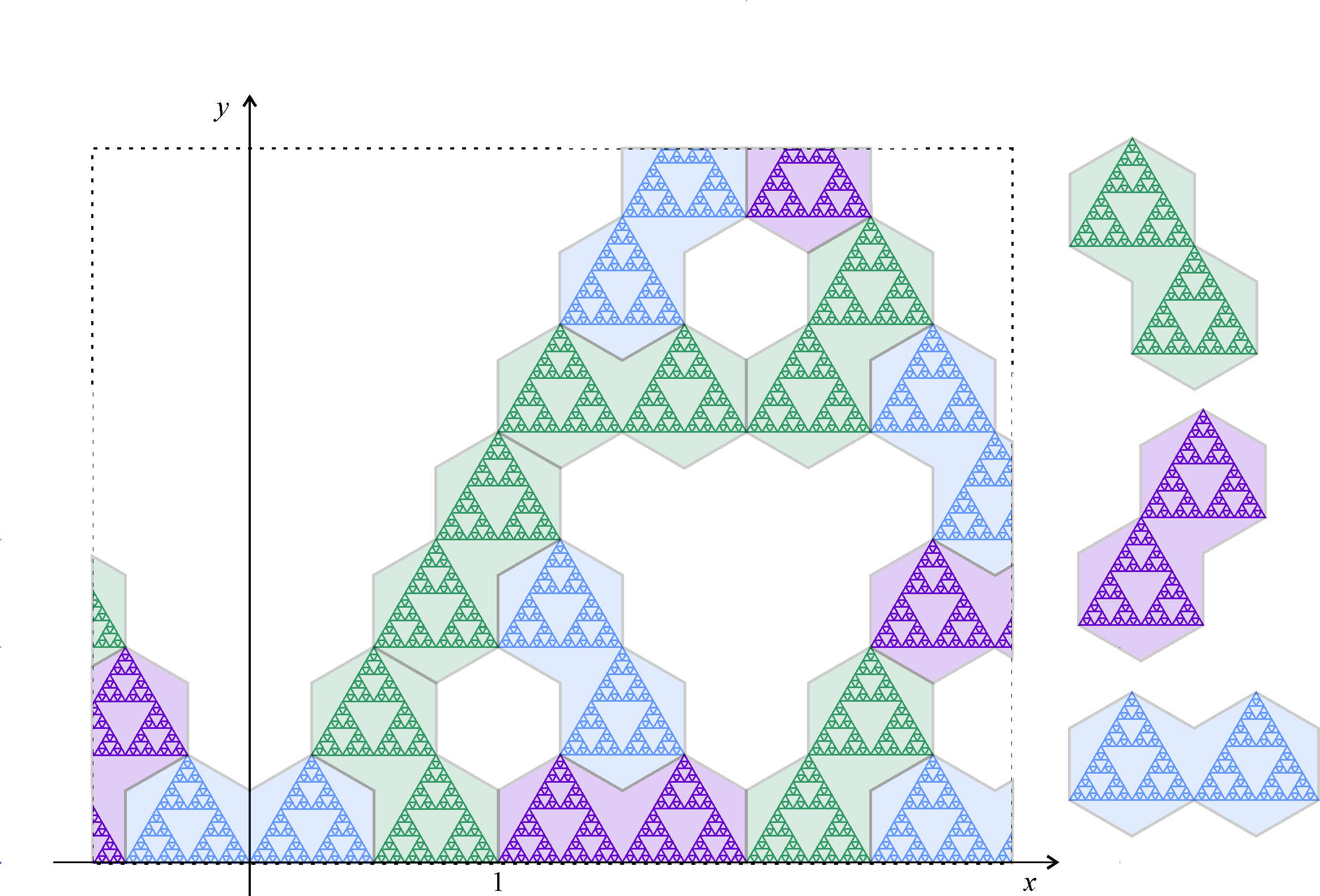}\caption{Example of a central open set tiling. The underlying
fractal tiling is also shown. The three prototiles are shown on the right.}%
\label{fige002}%
\end{figure}

\begin{figure}[ptb]
\centering
\includegraphics[
height=1.0668in,
width=2.5222in
]{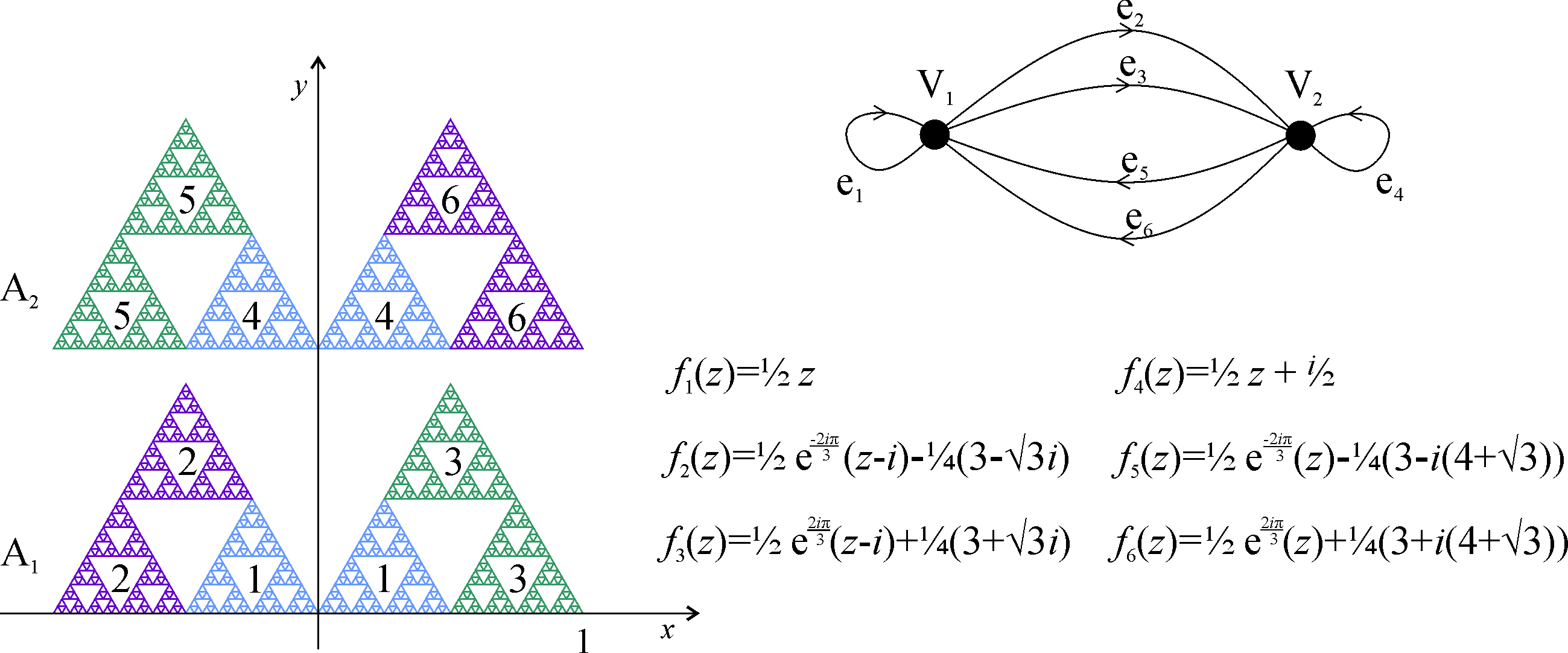}\caption{Illustration of the graph IFS in Figure \ref{fige002}
and Example \ref{Example02}.}%
\label{fige003}%
\end{figure}

\begin{example}
\label{new12}Let $\mathcal{F}$ be the tiling IFS on $\mathbb{R}^{2}$ defined
by $\left\vert \mathcal{V}\right\vert =1$ and the two similitudes
\begin{align*}
f_{1}%
\begin{bmatrix}
x\\
y
\end{bmatrix}
&  =%
\begin{bmatrix}
.6413 & -.3283\\
.3283 & .6413
\end{bmatrix}%
\begin{bmatrix}
x\\
y
\end{bmatrix}
+%
\begin{bmatrix}
.3231\\
-.133
\end{bmatrix}
\\
f_{2}%
\begin{bmatrix}
x\\
y
\end{bmatrix}
&  =%
\begin{bmatrix}
-.2362 & .4620\\
.4620 & .2362
\end{bmatrix}%
\begin{bmatrix}
x\\
y
\end{bmatrix}
+%
\begin{bmatrix}
.8052\\
.5093
\end{bmatrix}
\end{align*}
Part of the associated central open set tiling $\Xi(111...)$ is illustrated in
Figure \ref{new12x}, overlayed on the corresponding tiling $\Xi(111...)$.
Computations are approximate. By inspection, assuming the attractor is
connected and obeys the OSC, this IFS is rigid (see Section 16, Definition 9)
with respect to euclidean transformations.
\end{example}

\begin{figure}[ptb]
\centering
\includegraphics[
height=3.5848in,
width=3.7958in
]{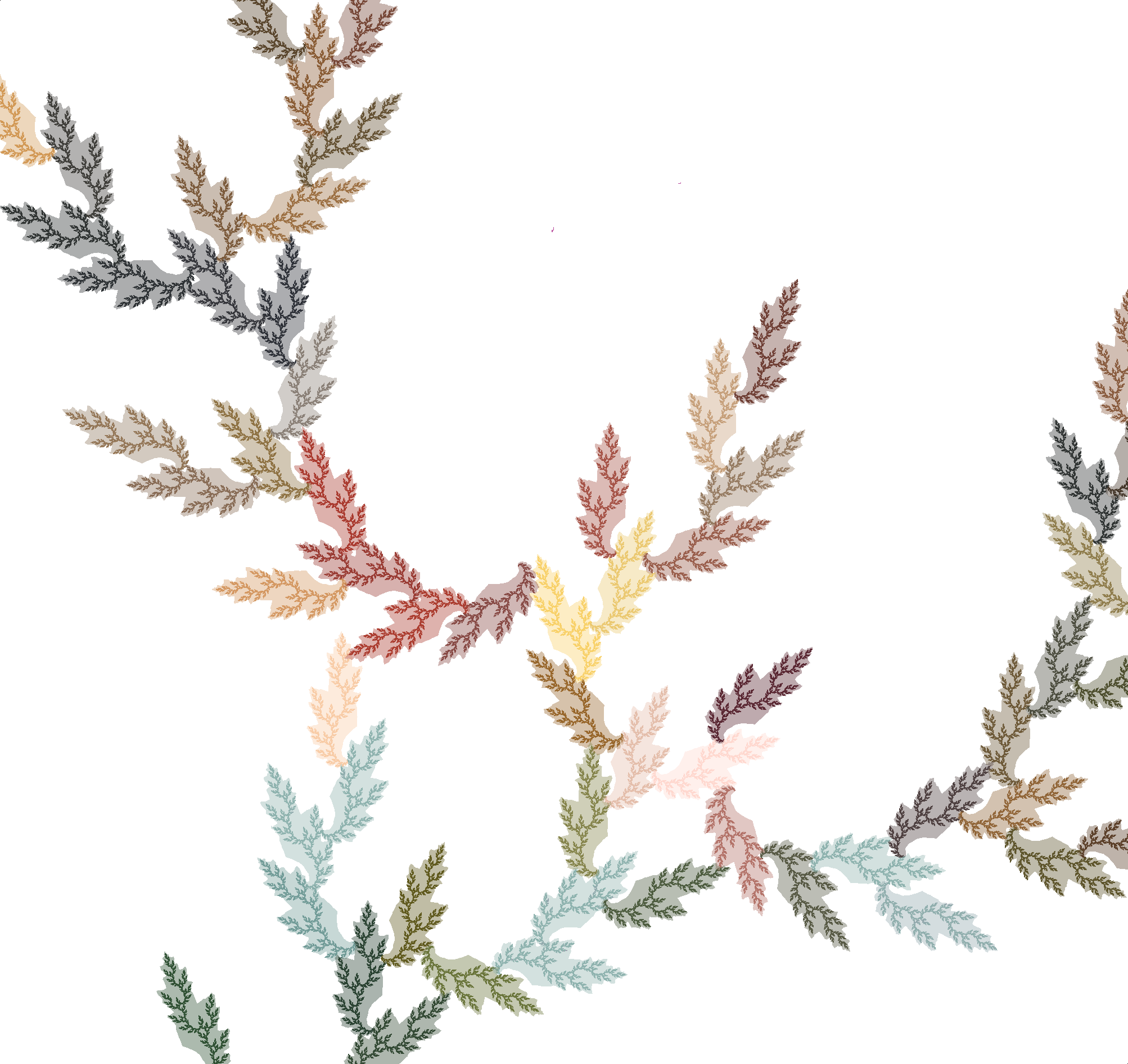}\caption{Part of a central open set tiling. See Example
\ref{new12}. The open set tiles and the underlying fractal tiles are
illustrated, a constant colour for each tile. }%
\label{new12x}%
\end{figure}

\begin{example}
Figure \ref{exacttiling} shows a patch of a central open set tiling associated
with a fractal example in the Introduction. \begin{figure}[ptb]
\centering
\includegraphics[
height=1.7916in,
width=1.8771in
]{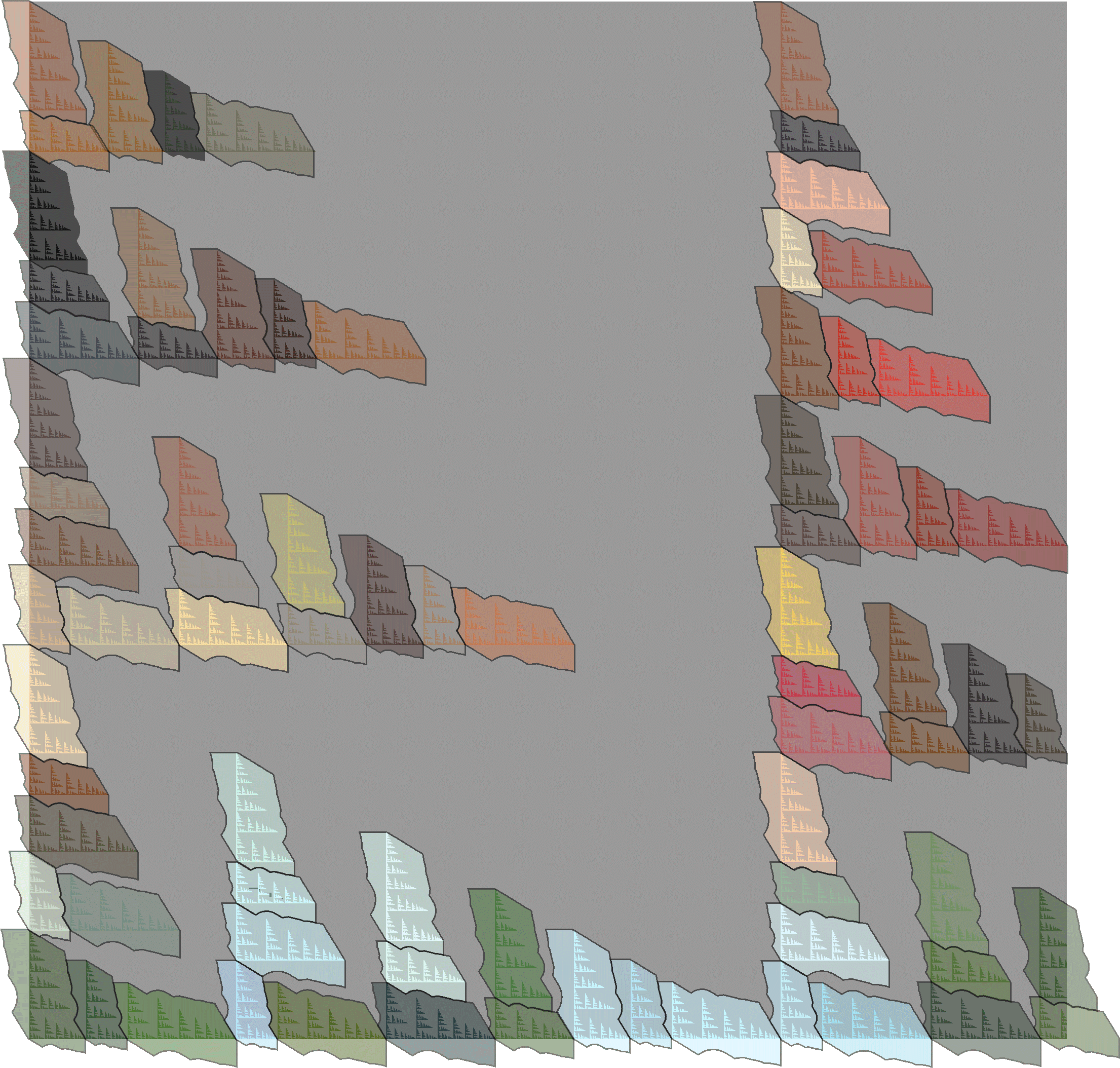}\caption{A patch of a central open set tiling associated
with the fractal example in Figure \ref{fig1-intro}. All tiles that intersect
the grey square are shown. Note the limited ways in which the tiles, of two
sizes, can meet. Each tile has its own colour, but some tiles have the same
colour.}%
\label{exacttiling}%
\end{figure}
\end{example}

\section{\label{symbosection}Symbolic structure : canonical symbolic tilings
and symbolic inflation and deflation}

Write $\Omega_{k}^{(v)}$ to mean any of $\Omega_{k}^{v}$ or $\Omega_{k}.$ The
following lemma tells us that $\Omega_{k+1}^{(v)}$ can be obtained from
$\Omega_{k}^{(v)}$ by adding symbols to the right-hand end of some strings in
$\Omega_{k}^{(v)}$ and leaving the other strings unaltered.

\begin{lemma}
\label{lemma:split}(\textbf{Symbolic Splitting}) For all $k\in\mathbb{N}$ and
$v\in\mathcal{V}$ the following relations hold:%
\[
\Omega_{k+1}^{(v)}=\left\{  \sigma\in\Omega_{k}^{(v)}:k+1<\xi\left(
\sigma\right)  \right\}  \cup\left\{  \sigma j\in\Sigma_{\ast}^{\left(
v\right)  }:\sigma\in\Omega_{k}^{(v)},k+1=\xi\left(  \sigma\right)  \right\}
\text{.}%
\]

\begin{proof}
The assertion follows at once from definition of $\Omega_{k}^{(v)}$.
\end{proof}
\end{lemma}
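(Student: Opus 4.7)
The plan is a direct verification of both inclusions from the defining inequalities $\sigma\in\Omega_k^{(v)}\iff \xi^-(\sigma)\leq k<\xi(\sigma)$, relying on the one-letter-extension identities $\xi^-(\sigma j)=\xi(\sigma)$ and $\xi(\sigma j)=\xi(\sigma)+a_j$ together with the standing fact that each $a_j\geq 1$. The vertex superscript is handled automatically because $(\sigma j)^-=\sigma^-$, so everything reduces to tracking two integer-valued quantities.

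For the $\supseteq$ inclusion I fix $\sigma\in\Omega_k^{(v)}$. Integrality and $k<\xi(\sigma)$ force either $k+1<\xi(\sigma)$ or $k+1=\xi(\sigma)$. In the first case $\xi^-(\sigma)\leq k<k+1<\xi(\sigma)$ puts $\sigma$ itself into $\Omega_{k+1}^{(v)}$. In the second case, any admissible extension $\sigma j$ (with $j^-=\sigma^+$) satisfies $\xi^-(\sigma j)=\xi(\sigma)=k+1$ and $\xi(\sigma j)=k+1+a_j\geq k+2$, so $\sigma j\in\Omega_{k+1}^{(v)}$, and its initial vertex is still $v$.

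For the $\subseteq$ inclusion I take $\tau\in\Omega_{k+1}^{(v)}$, giving $\xi^-(\tau)\leq k+1<\xi(\tau)$. Integrality forces either $\xi^-(\tau)\leq k$ or $\xi^-(\tau)=k+1$. In the first case $\tau$ already belongs to $\Omega_k^{(v)}$ with $k+1<\xi(\tau)$, landing in the first set on the right. In the second case, let $\sigma$ be $\tau$ with its last letter $j$ removed; then $\xi(\sigma)=\xi^-(\tau)=k+1$, and $\xi^-(\sigma)=\xi(\sigma)-a_{\sigma_{|\sigma|}}\leq k$ since $a_{\sigma_{|\sigma|}}\geq 1$. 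Hence $\sigma\in\Omega_k^{(v)}$ with $\xi(\sigma)=k+1$, and $\tau=\sigma j$ lies in the second set on the right.

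This is a bookkeeping argument rather than a substantive one, so there is no real obstacle; the only minor points worth checking are the preservation of the superscript $v$ (immediate) and the boundary case in the second subcase, which is harmless because $k\geq 1$ together with $\xi^-(\tau)=k+1\geq 2$ forces $|\tau|\geq 2$, so $\sigma$ is a legitimate nonempty path.
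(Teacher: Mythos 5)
Your proof is correct and is exactly the routine verification that the paper compresses into ``follows at once from the definition of $\Omega_{k}^{(v)}$'': both directions reduce to unpacking $\xi^{-}(\sigma)\leq k<\xi(\sigma)$, using integrality of $\xi$, the identities $\xi^{-}(\sigma j)=\xi(\sigma)$ and $\xi(\sigma j)=\xi(\sigma)+a_{j}$ with $a_{j}\geq 1$, and the invariance of the initial vertex under appending or deleting a final edge. Your extra checks (nonemptiness of $\sigma$ when the last letter is stripped, preservation of the superscript $v$) are harmless and correctly handled, so there is nothing to add.
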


Define $\alpha_{s}^{-1}:\Omega_{k}^{(v)}\rightarrow2^{\Omega_{k+1}^{(v)}}$ by%
\[
\alpha_{s}^{-1}\sigma=\left\{
\begin{array}
[c]{c}%
\sigma\text{ if }k+1<\xi(\sigma)\\
\{\sigma e:\sigma_{\left\vert \sigma\right\vert }^{+}=e^{-},e\in
\mathcal{E\}}\text{ if }k+1=\xi(\sigma)
\end{array}
\right.
\]
Then
\[
\left\{  \sigma\in\alpha_{s}^{-1}(\omega):\omega\in\Omega_{k}^{v}\right\}
=\Omega_{k+1}^{v}%
\]
This defines symbolic inflation or \textquotedblleft splitting and expansion"
of $\Omega_{k}^{(v)}$, some words in $\Omega_{k+1}^{(v)}$ being the same as in
$\Omega_{k}^{(v)}$ while all the other words in $\Omega_{k}^{(v)}$, namely
those $\sigma$ for which $k+1=\xi(\sigma)$, are split. The inverse operation
is symbolic deflation or \textquotedblleft amalgamation and shrinking",
described by the function%
\[
\alpha_{s}:\Omega_{k+1}^{(v)}\rightarrow\Omega_{k}^{(v)}\text{, }\alpha
_{s}(\Omega_{k+1}^{(v)})=\Omega_{k}^{(v)}%
\]
where $\alpha_{s}(\sigma)$ is the unique $\omega\in\Omega_{k}^{(v)}$ such that
$\sigma=\omega\beta$ for some $\beta\in\Sigma_{\ast}$. Note that $\beta$ may
be the empty string.

We can use $\Omega_{k}^{(v)}$ to define a partition of $\Omega_{m}^{(v)}$ for
$m\geq k$. The partition of $\Omega_{k+j}^{(v)}$ is $\Omega_{k+j}^{(v)}/\sim$
where $x\sim y$ if $\alpha_{s}^{j}(x)=\alpha_{s}^{j}(y)$.

\begin{lemma}
\label{cor:partition}(\textbf{Symbolic Partitions)} For all $m\geq k\geq0,$
the set $\Omega_{k}^{(v)}$ defines a partition $P_{m,k}^{(v)}$ of $\Omega
_{m}^{(v)}$ according to $p\in P_{m,k}^{(v)}$ if and only if there is
$\omega\in\Sigma_{\ast}$ such that
\[
p=\{\omega\beta\in\Omega_{m}^{(v)}:\beta\in\Omega_{k}^{(v)}\}.
\]

\end{lemma}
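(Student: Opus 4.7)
The plan is to exhibit a canonical prefix map $\rho\colon \Omega_m^{(v)}\to \Omega_k^{(v)}$ whose fibers are exactly the blocks asserted by the lemma. Since the fibers of any map automatically partition its domain, once such a $\rho$ is in hand the partition assertion follows immediately, and identifying those fibers with the sets described in the statement finishes the proof.

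The heart of the argument is to show that every $\sigma\in \Omega_m^{(v)}$ has a \emph{unique} prefix lying in $\Omega_k^{(v)}$. Write $\ell=|\sigma|$ and consider the prefix weights $\xi(\sigma|j)$ for $j=0,1,\dots,\ell$. Since each $a_i\geq 1$, this sequence is strictly increasing from $0$ to $\xi(\sigma)>m\geq k$, and by construction $\xi^{-}(\sigma|j)=\xi(\sigma|j-1)$ for $j\geq 1$. Hence the half-open intervals $[\xi^{-}(\sigma|j),\xi(\sigma|j))$ for $j=1,\dots,\ell$ form a disjoint cover of $[0,\xi(\sigma))$, so the integer $k$ lies in exactly one of them; the corresponding prefix $\omega:=\sigma|j$ is the unique one satisfying $\xi^{-}(\omega)\leq k<\xi(\omega)$. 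Since $\omega^{-}=\sigma^{-}=v$, we have $\omega\in\Omega_k^{(v)}$, and we set $\rho(\sigma):=\omega$.

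The remaining steps are routine. The fiber $\rho^{-1}(\omega)$ consists exactly of those $\sigma\in\Omega_m^{(v)}$ of the form $\omega\beta$ for some tail $\beta\in\Sigma_{\ast}$ (empty string allowed, starting vertex $\omega^{+}$), which is the block description in the statement. Moreover, comparing with the recursive description of $\alpha_s$ that comes directly from Lemma~\ref{lemma:split} (namely, $\alpha_s(\tau)$ strips off whatever symbols were appended to the unique $\Omega_{(\cdot)}^{(v)}$-prefix during one step of inflation), a short induction on $j=m-k$ yields $\alpha_s^{\,m-k}(\sigma)=\rho(\sigma)$, so the fibers of $\rho$ coincide with the equivalence classes of $\sim$ introduced just before the lemma. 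The only genuinely delicate point is the uniqueness of the $\Omega_k^{(v)}$-prefix; everything else is bookkeeping with $\xi$ and $\xi^{-}$.
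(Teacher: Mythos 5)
Your proof is correct and takes essentially the same route as the paper: both rest on the key fact that every $\sigma\in\Omega_m^{(v)}$ has a unique prefix lying in $\Omega_k^{(v)}$, and the blocks of the partition are exactly the fibers of the resulting prefix map. The only difference is in how that uniqueness is justified: the paper simply cites Lemma \ref{lemma:split} (implicitly an induction on successive splittings), whereas you derive it directly from the definition of $\Omega_k^{(v)}$ by noting that the half-open intervals $[\xi(\sigma|j-1),\xi(\sigma|j))$ tile $[0,\xi(\sigma))$ so that $k$ falls in exactly one of them --- a self-contained justification of the one genuinely non-routine step.
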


\begin{proof}
This follows from Lemma \ref{lemma:split}: for any $\theta\in\Omega_{m}^{(v)}$
there is a unique $\omega\in\Omega_{k}^{(v)}$ such that $\theta=\omega\beta$
for some $\beta\in\Sigma_{\ast}$. Each word in $\Omega_{m}^{(v)}$ is
associated with a unique word in $\Omega_{k}^{(v)}$. Each word in $\Omega
_{k}^{(v)}$ is associated with a set of words in $\Omega_{m}^{(v)}$.
\end{proof}

According to Lemma \ref{lemma:split}, $\Omega_{k+1}^{(v)}$ may be calculated
by tacking words (some of which may be empty) onto the right-hand end of the
words in $\Omega_{k}^{(v)}$. We can invert this description by expressing
$\Omega_{k}^{(v)}$ as a union of predecessors ($\Omega_{j}^{(v)}$s with $j<k$)
of $\Omega_{k}^{(v)}$ with words tacked onto their other ends, that is, their
left-hand ends.

\begin{theorem}
\label{lem:struct}(\textbf{Symbolic Predecessors)} For all $k\geq a_{\max}+l$,
for all $v\in\mathcal{V}$, for all $l\in\mathbb{N}_{0},$%
\[
\Omega_{k}^{\left(  v\right)  }=\bigsqcup\limits_{\omega\in\Omega_{l}^{\left(
v\right)  }}\omega\Omega_{k-\xi\left(  \omega\right)  }^{\omega^{+}}%
\]

\end{theorem}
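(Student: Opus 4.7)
The claim is really a statement that each word $\sigma \in \Omega_k^{(v)}$ admits a unique factorization $\sigma = \omega\beta$ with $\omega \in \Omega_l^{(v)}$ and $\beta \in \Omega_{k-\xi(\omega)}^{\omega^+}$, together with the converse that each such concatenation lies in $\Omega_k^{(v)}$. The plan is to establish the two containments and the disjointness of the union separately, with the hypothesis $k \ge a_{\max} + l$ entering only to force $\beta$ to be nonempty.

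For the containment $\subseteq$, given $\sigma = \sigma_1\sigma_2\cdots\sigma_n \in \Omega_k^{(v)}$, I would define $\omega = \sigma_1\cdots\sigma_j$ where $j$ is the smallest index with $\xi(\sigma_1\cdots\sigma_j) > l$. The minimality of $j$ immediately yields $\xi^-(\omega) \le l < \xi(\omega)$, and $\omega^- = \sigma^- = v$, so $\omega \in \Omega_l^{(v)}$. Setting $\beta = \sigma_{j+1}\cdots\sigma_n$, the identities $\xi(\sigma) = \xi(\omega) + \xi(\beta)$ and $\xi^-(\sigma) = \xi(\omega) + \xi^-(\beta)$ transport the defining inequalities of $\Omega_k^{(v)}$ into $\xi^-(\beta) \le k - \xi(\omega) < \xi(\beta)$, while $\beta^- = \omega^+$ is forced by $\sigma$ being a valid path in $\mathcal{G}$. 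The hypothesis $k \ge a_{\max} + l$ is used precisely here to rule out $j = n$: if $\omega = \sigma$, then $\xi(\sigma) = \xi^-(\sigma) + a_{\sigma_n} \le l + a_{\max} \le k$, contradicting $\xi(\sigma) > k$.

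For the reverse containment $\supseteq$, given $\omega \in \Omega_l^{(v)}$ and $\beta \in \Omega_{k-\xi(\omega)}^{\omega^+}$, I would simply add $\xi(\omega)$ back to the defining inequalities for $\beta$ to obtain $\xi^-(\omega\beta) \le k < \xi(\omega\beta)$, and observe $(\omega\beta)^- = \omega^- = v$. Disjointness of the pieces is immediate: if $\omega\beta = \omega'\beta'$ with $\omega,\omega' \in \Omega_l^{(v)}$ distinct, then one is a strict prefix of the other, say $\omega$ of $\omega'$; but then $\xi(\omega) \le \xi^-(\omega') \le l$, contradicting $\omega \in \Omega_l^{(v)}$.

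I do not expect any serious obstacle; the proof is essentially bookkeeping on the additive functional $\xi$. The one subtlety to watch is the base case $l = 0$, where $\Omega_0^{(v)}$ is defined as the set of single edges out of $v$ rather than by the inequality $\xi^-(\omega) \le 0 < \xi(\omega)$; however the two descriptions agree since $a_n \ge 1$ forces $\omega$ to have length one in either case, so the argument above goes through unchanged.
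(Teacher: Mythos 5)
Your proof is correct and follows essentially the same route as the paper: both rest on the unique factorization $\sigma=\omega\beta$ with $\omega\in\Omega_{l}^{(v)}$ (which the paper obtains by citing Lemma \ref{cor:partition}, and which you re-derive directly by taking the minimal prefix with $\xi>l$) followed by additivity of $\xi$. Your version is slightly more explicit than the paper's in pinpointing where the hypothesis $k\geq a_{\max}+l$ is used, namely to guarantee $\beta\neq\varnothing$.
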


\begin{proof}
It is clear that the union is indeed a disjoint union. It is easy to check
that the r.h.s. is contained in the l.h.s. Conversely, if $\sigma\in\Omega
_{k}^{\left(  v\right)  }$ then there is unique $\omega\in\Omega_{l}^{\left(
v\right)  }$ such that $\sigma=\omega\beta$ for some $\beta\in\Sigma_{\ast}$
by Corollary \ref{cor:partition}. Because $\omega\beta\in\Sigma_{\ast}$ it
follows that $\beta_{1}$ is an edge that starts where the last edge in
$\omega$ is directed, namely the vertex $\omega^{+}$. Finally, since
$\xi\left(  \omega\beta\right)  =\xi\left(  \omega)+\xi(\beta\right)  $ it
follows that $\beta\in\Omega_{k-\xi\left(  \omega\right)  }^{\omega^{+}}$.
\end{proof}

\section{\label{canonical}Canonical tilings and their relationship to
$\Pi(\theta)$}

\begin{definition}
\label{canondef}We define the \textbf{canonical tilings }of the tiling IFS
$(\mathcal{F},\mathcal{G)}$ to be
\[
T_{k}:=s^{-k}\pi(\Omega_{k}),\text{ }T_{k}^{v}:=s^{-k}\pi(\Omega_{k}^{v})
\]
$k\in\mathbb{N},v\in\mathcal{V},$ also%
\begin{align*}
T_{0}  &  :=\Pi(0):=\cup_{v\in\mathcal{V}}T_{0}^{v},T_{0}^{v}:=\Pi
(e|0):=\{f_{e}(A^{e^{+}}):e^{-}=v\},\\
T_{-1}^{v}  &  :=sA_{v}\text{, }T_{-1}:=\cup_{v\in\mathcal{V}}sA_{v}%
\end{align*}

\end{definition}

A canonical tiling may be written as a disjoint union of images under
isometries applied to other canonical tilings as described in Lemma
\ref{lem:canonical}. More generally we may say, concerning any tiling $T$
which is a union of images under isometries applied to canonical tilings, that
\textquotedblleft$T\ $can be written as an \textbf{isometric combination} of
canonical tilings\textquotedblright.

\begin{lemma}
\label{lem:canonical}For all $k\geq a_{\max}+l$, for all $l\in\mathbb{N}_{0},
$ for all $v\in\mathcal{V}$
\[
T_{k}^{v}=\bigsqcup\limits_{\omega\in\Omega_{l}^{v}}E_{k,\omega}%
T_{k-\xi\left(  \omega\right)  }^{\omega^{+}}\text{ and }T_{k}=\bigsqcup
\limits_{\omega\in\Omega_{l}}E_{k,\omega}T_{k-\xi\left(  \omega\right)
}^{\omega^{+}}%
\]
where $E_{k,\omega}=s^{-k}f_{\omega}s^{k-\xi(\omega)}\in\mathcal{U}$ is an isometry.
\end{lemma}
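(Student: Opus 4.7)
The plan is to derive this lemma as a direct consequence of the symbolic identity in Theorem~\ref{lem:struct}, by applying the address map $\pi$ to both sides and then rescaling by $s^{-k}$. The three things to check are: (a) that $\pi$ distributes over the symbolic decomposition in a way that pulls an $f_\omega$ out of the inner union, (b) that the resulting prefactor $s^{-k} f_\omega s^{k-\xi(\omega)}$ is actually an isometry, and (c) that disjointness of the symbolic pieces passes through to disjointness of the tile collections.

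First I would apply Theorem~\ref{lem:struct} to obtain
\[
\Omega_k^{(v)} = \bigsqcup_{\omega\in\Omega_l^{(v)}}\omega\,\Omega_{k-\xi(\omega)}^{\omega^+}
\]
valid for $k\geq a_{\max}+l$, and then take the $\pi$-image of each concatenated word. Using $\pi(\omega\beta)=f_\omega\,\pi(\beta)$ for any $\beta\in\Sigma$ with $\beta^-=\omega^+$ (which follows from Definition~\ref{attractordef}(iv) in the finite case and from the limit characterization in Theorem~\ref{thm:two} in the infinite case), this yields
\[
\pi\bigl(\Omega_k^{(v)}\bigr) \;=\; \bigsqcup_{\omega\in\Omega_l^{(v)}} f_\omega\bigl(\pi(\Omega_{k-\xi(\omega)}^{\omega^+})\bigr).
\]
Substituting $\pi(\Omega_{k-\xi(\omega)}^{\omega^+})=s^{k-\xi(\omega)}T_{k-\xi(\omega)}^{\omega^+}$ from Definition~\ref{canondef}, and then multiplying through by $s^{-k}$, gives the claimed identity with $E_{k,\omega}=s^{-k}f_\omega\,s^{k-\xi(\omega)}$.

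Next I would verify that $E_{k,\omega}$ is an isometry. Each $f_{\omega_i}$ is a similitude of ratio $s^{a_{\omega_i}}$, so $f_\omega$ is a similitude of ratio $s^{\xi(\omega)}$; conjugating-and-prepending by the scalar similitudes $s^{-k}$ and $s^{k-\xi(\omega)}$ yields total scaling $s^{-k}\cdot s^{\xi(\omega)}\cdot s^{k-\xi(\omega)}=1$, so $E_{k,\omega}\in\mathcal{U}$ is indeed an isometry. Disjointness is the only non-purely-algebraic point: I would argue that the sets $f_\omega\bigl(\pi(\Omega_{k-\xi(\omega)}^{\omega^+})\bigr)\subset f_\omega(A_{\omega^+})$ corresponding to distinct $\omega,\omega'\in\Omega_l^{(v)}$ can only meet in the critical set, and then invoke Theorem~\ref{theorem5}(2)--(3) (small intersections) together with the fact that each piece is a well-defined tiling by Theorem~\ref{thm:three}(1) to conclude that the union of tile collections is actually a disjoint union of tilings. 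The main (mild) obstacle is this last bookkeeping: making precise that ``disjoint union'' is meant in the sense of Definition~\ref{def10}'s notion of tiling, so that the two asserted identities express $T_k^v$ and $T_k$ as isometric combinations of the smaller canonical tilings. Once that is settled, the proof reduces to Theorem~\ref{lem:struct} plus a scaling-factor computation.
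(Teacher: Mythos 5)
Your proposal is correct and is precisely the ``direct calculation using Theorem \ref{lem:struct}'' that the paper's one-line proof refers to: apply $\pi$ to the symbolic decomposition, use $\pi(\omega\beta)=f_\omega\pi(\beta)$ and Definition \ref{canondef}, rescale by $s^{-k}$, and check that the scaling ratios of $s^{-k}$, $f_\omega$, and $s^{k-\xi(\omega)}$ multiply to $1$. Your additional care over disjointness of the tile collections is a reasonable elaboration of a point the paper leaves implicit, not a departure from its argument.
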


\begin{proof}
Direct calculation using Theorem \ref{lem:struct}.
\end{proof}

\begin{theorem}
\label{piusingts}For all $\theta\in\Sigma_{\ast}^{\dag},$%
\[
\Pi(\theta)=E_{\theta}T_{\xi(\theta)}^{\theta^{+}},
\]
where $E_{\theta}=f_{-\theta}s^{\xi(\theta)}\in\mathcal{U}$. Also if
$l\in\mathbb{N}_{0},$ and $\xi(\theta)\geq a_{\max}+l$, then%
\[
\Pi(\theta)=\bigsqcup\limits_{\omega\in\Omega_{l}^{\theta^{+}}}E_{\theta
,\omega}T_{\xi(\theta)-\xi\left(  \omega\right)  }^{\omega^{+}}%
\]
where $E_{\theta,\omega}=f_{_{-\theta}}f_{\omega}s^{\xi(\theta)-\xi(\omega
)}\in\mathcal{U}$ is an isometry.
\end{theorem}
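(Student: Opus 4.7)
The plan is to derive both identities by direct substitution, chaining Definition \ref{def10}, Definition \ref{canondef}, and Lemma \ref{lem:canonical}, with the only nontrivial verification being that the composite maps $E_\theta$ and $E_{\theta,\omega}$ really are isometries.

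First I would prove the equality $\Pi(\theta)=E_\theta T_{\xi(\theta)}^{\theta^+}$ for $\theta\in\Sigma_\ast^\dag$. By Definition \ref{def10},
\[
\Pi(\theta)=f_{-\theta}\,\pi\bigl(\Omega_{\xi(\theta)}^{\theta^+}\bigr),
\]
while by Definition \ref{canondef},
\[
T_{\xi(\theta)}^{\theta^+}=s^{-\xi(\theta)}\pi\bigl(\Omega_{\xi(\theta)}^{\theta^+}\bigr),
\]
so $\pi(\Omega_{\xi(\theta)}^{\theta^+})=s^{\xi(\theta)}T_{\xi(\theta)}^{\theta^+}$ and hence $\Pi(\theta)=f_{-\theta}s^{\xi(\theta)}T_{\xi(\theta)}^{\theta^+}=E_\theta T_{\xi(\theta)}^{\theta^+}$. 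To confirm $E_\theta\in\mathcal{U}$ I would note that since each $f_n\in\mathcal{F}$ is a similitude of ratio $s^{a_n}$ (Definition \ref{defONE}), the composition $f_\theta$ has scaling ratio $s^{a_{\theta_1}+\cdots+a_{\theta_{|\theta|}}}=s^{\xi(\theta)}$. Therefore $f_{-\theta}=f_\theta^{-1}$ has scaling ratio $s^{-\xi(\theta)}$, and composing with the scaling $s^{\xi(\theta)}$ gives a similitude of ratio $1$, i.e.\ an isometry.

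For the refined decomposition, I would apply Lemma \ref{lem:canonical} with $k=\xi(\theta)$ and $v=\theta^+$ (the hypothesis $\xi(\theta)\geq a_{\max}+l$ is exactly what the lemma needs), obtaining
\[
T_{\xi(\theta)}^{\theta^+}=\bigsqcup_{\omega\in\Omega_l^{\theta^+}}E_{\xi(\theta),\omega}\,T_{\xi(\theta)-\xi(\omega)}^{\omega^+}, \qquad E_{\xi(\theta),\omega}=s^{-\xi(\theta)}f_\omega s^{\xi(\theta)-\xi(\omega)}.
\]
Applying $E_\theta=f_{-\theta}s^{\xi(\theta)}$ termwise and simplifying the scalings,
\[
\Pi(\theta)=E_\theta T_{\xi(\theta)}^{\theta^+}=\bigsqcup_{\omega\in\Omega_l^{\theta^+}}\bigl(f_{-\theta}s^{\xi(\theta)}\cdot s^{-\xi(\theta)}f_\omega s^{\xi(\theta)-\xi(\omega)}\bigr)T_{\xi(\theta)-\xi(\omega)}^{\omega^+}=\bigsqcup_{\omega\in\Omega_l^{\theta^+}}E_{\theta,\omega}\,T_{\xi(\theta)-\xi(\omega)}^{\omega^+},
\]
with $E_{\theta,\omega}=f_{-\theta}f_\omega s^{\xi(\theta)-\xi(\omega)}$. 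The same scaling bookkeeping as in the first part—$f_{-\theta}$ has ratio $s^{-\xi(\theta)}$, $f_\omega$ has ratio $s^{\xi(\omega)}$, and the dilation contributes $s^{\xi(\theta)-\xi(\omega)}$—shows the net ratio is $1$, so $E_{\theta,\omega}\in\mathcal{U}$.

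The only subtlety I expect is disjointness of the union: this must be inherited from Lemma \ref{lem:canonical}, where the union $\bigsqcup_{\omega\in\Omega_l^{\theta^+}}E_{\xi(\theta),\omega}T_{\xi(\theta)-\xi(\omega)}^{\omega^+}$ is stated to be disjoint. Since $E_\theta$ is a bijection of $\mathbb{R}^M$, applying it preserves the tilewise disjointness, so the resulting expression for $\Pi(\theta)$ really is a disjoint union of tiles. No genuine obstacle arises; the theorem is essentially a bookkeeping consequence of the earlier definitions and Lemma \ref{lem:canonical}.
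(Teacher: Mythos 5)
Your proposal is correct and follows essentially the same route as the paper: the first identity is obtained by unfolding Definition \ref{def10} and Definition \ref{canondef} (the paper inserts the factor $s^{\xi(\theta)}s^{-\xi(\theta)}$, which is the same bookkeeping), and the second identity is obtained by applying $E_{\theta}$ termwise to the decomposition of Lemma \ref{lem:canonical}. Your added checks that the composite maps have scaling ratio $1$ and that disjointness is preserved are harmless elaborations of what the paper leaves implicit.
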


\begin{proof}
Writing $\theta=\theta_{1}\theta_{2}...\theta_{k}$ so that $\left\vert
\theta\right\vert =k,$ we have from the definitions
\begin{align*}
\Pi(\theta_{1}\theta_{2}...\theta_{k})  &  =f_{_{-\theta_{1}\theta
_{2}...\theta_{k}}}\{\pi\left(  \sigma\right)  :\sigma\in\Omega_{\xi
(\theta_{1}\theta_{2}...\theta_{k})}^{\theta_{k}^{+}}\}\\
&  =f_{_{-\theta_{1}\theta_{2}...\theta_{k}}}s^{\xi(\theta_{1}\theta
_{2}...\theta_{k})}s^{-\xi(\theta_{1}\theta_{2}...\theta_{k})}\{\pi
(\sigma):\sigma\in\Omega_{\xi(\theta_{1}\theta_{2}...\theta_{k})}^{\theta
_{k}^{+}}\}\\
&  =E_{\theta}T_{\xi(\theta)}^{\theta_{\left\vert \theta\right\vert }^{+}}%
\end{align*}
where $E_{\theta}=f_{-\theta}s^{\xi(\theta)}$. The last statement of the
theorem follows similarly from Lemma \ref{lem:canonical}.
\end{proof}

\section{\label{quasisec}Tilings in $\mathbb{T}^{\infty}$ that are
quasiperiodic}

We recall from \cite{barnsleyvince} the following definitions. A subset $P$ of
a tiling $T$ is called a \textit{patch} of $T$ if it is contained in a ball of
finite radius. A tiling $T$ is \textit{quasiperiodic} if, for any patch $P$,
there is a number $R>0$ such that any ball \textit{centered at a point in the
support of }$T,$ of radius $R,$ contains an isometric copy of $P$. Two tilings
are \textit{locally isomorphic} if any patch in either tiling also appears in
the other tiling. A tiling $T$ is\textit{\ self-similar} if there is a
similitude $\psi$ such that $\psi(t)$ is a union of tiles in $T$ for all $t\in
T$. In this case $\psi$ is called a \textit{self-similarity }for\textit{\ }%
$T$. These definitions are consistent with \cite{radin, solomyak2} when
applied to \textquotedblleft classical\textquotedblright\ self-similar tilings
supported on $\mathbb{R}^{M}$.

We say that the tiling IFS $(\mathcal{F},\mathcal{G)}$ is \textit{coprime} if
there is a pair $v,w\in\mathcal{V}$ and there are $\sigma,\omega\in
\Sigma_{\ast}$ with $\sigma^{+}=\omega^{+}=v$ and $\sigma^{-}=\omega^{-}=w$
such that the greatest common factor of $\xi\left(  \sigma\right)  $ and
$\xi\left(  \omega\right)  $ is $1$.

\begin{theorem}
\label{theoremTHREE} Let $(\mathcal{F},\mathcal{G)}$ be a tiling IFS.

\begin{enumerate}
\item If $(\mathcal{F},\mathcal{G)}$ is coprime, then each tiling in
$\mathbb{T}_{\infty}:=\{\Pi(\theta):\theta\in\Sigma_{\infty}^{\dag}\}$ is quasiperiodic.

\item If $(\mathcal{F},\mathcal{G)}$ is coprime, then each pair of tilings in
$\mathbb{T}_{\infty}$ are locally isomorphic.

\item If $\theta\in{\Sigma}_{\infty}^{\dag}$ is eventually periodic, then
$\Pi(\theta)$ is self-similar: if $\theta=\alpha\overline{\beta}$ for some
$\alpha,\beta\in{\Sigma}_{\ast}^{\dag},$ then $f_{-\alpha}f_{-\beta}\left(
f_{-\alpha}\right)  ^{-1}$ is a self-similarity for $\Pi(\theta)$.
\end{enumerate}
\end{theorem}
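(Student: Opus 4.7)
The plan is to handle part (3) first by direct calculation, and then parts (1) and (2) together via a common realization claim driven by the coprime hypothesis.

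For part (3), I would set $\psi := f_{-\alpha}f_{-\beta}(f_{-\alpha})^{-1} = f_{-\alpha}f_{-\beta}f_\alpha$, which is a similitude with expansion factor $s^{-\xi(\beta)}>1$, let $v := \beta^+ = \beta^-$ in $\mathcal{G}^\dag$ (the cycle vertex of $\beta$, required for $\alpha\overline{\beta}$ to be a valid path), and put $k_n := \xi(\alpha) + n\xi(\beta)$. Applying Theorem \ref{piusingts} to the prefixes $\theta|(|\alpha|+n|\beta|) = \alpha\beta^n$ yields
\[
\Pi(\alpha\beta^n) = f_{-\alpha} f_{-\beta}^{\,n}\, \pi\!\left(\Omega_{k_n}^{v}\right), \qquad \Pi(\alpha\beta^{n+1}) = f_{-\alpha} f_{-\beta}^{\,n+1}\, \pi\!\left(\Omega_{k_{n+1}}^{v}\right),
\]
so that $\psi\,\Pi(\alpha\beta^n) = f_{-\alpha} f_{-\beta}^{\,n+1}\, \pi(\Omega_{k_n}^{v})$. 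Iterating Lemma \ref{lemma:split} a total of $\xi(\beta)$ times shows that $\Omega_{k_{n+1}}^v$ refines $\Omega_{k_n}^v$ by splitting some words: every $\sigma \in \Omega_{k_n}^v$ is the prefix of those extensions $\sigma\gamma \in \Omega_{k_{n+1}}^v$, and so $\pi(\sigma) = \bigcup_\gamma \pi(\sigma\gamma)$. Consequently each tile of $\psi\,\Pi(\alpha\beta^n)$ is a union of tiles of $\Pi(\alpha\beta^{n+1}) \subset \Pi(\theta)$. Since $\Pi(\theta)=\bigcup_n\Pi(\alpha\beta^n)$ is nested and every tile of $\Pi(\theta)$ lies in some $\Pi(\alpha\beta^{n_0})$, this proves $\psi$ is a self-similarity.

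For parts (1) and (2), I would establish a realization claim: for each $K \in \mathbb{N}$ and each vertex $v$, there exists $R = R(K,v) > 0$ such that for every vertex $w$ and every sufficiently large $k$, every ball of radius $R$ centered in the support of $T_k^w$ contains an isometric copy of $T_K^v$ arising as a sub-tiling of $T_k^w$. The mechanism is Lemma \ref{lem:canonical}: the decomposition $T_k^w = \bigsqcup_{\omega \in \Omega_l^w} E_{k,\omega} T_{k-\xi(\omega)}^{\omega^+}$ produces an isometric copy of $T_K^v$ whenever there is $\omega \in \Omega_l^w$ with $\omega^+ = v$ and $\xi(\omega) = k-K$. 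Coprimality yields a Frobenius (``Chicken McNugget'') conclusion: the set $\{a\xi(\sigma^*)+b\xi(\omega^*) : a,b\in\mathbb{N}_0\}$ is cofinite in $\mathbb{N}_0$. Strong connectivity and primitivity of $\mathcal{G}$ then let us prepend and append short bridging paths to transfer this cofinite set of attainable $\xi$-lengths from the distinguished pair $(w^*,v^*)$ to every pair $(w,v)$, while counting the decomposition tiles gives the uniform density bound.

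With the realization claim in hand, (1) and (2) follow. Any patch $P \subset \Pi(\theta)$ lies in some $\Pi(\theta|k_0)$, which by Theorem \ref{piusingts} is an isometric copy of $T_K^{\theta^+}$ with $K = \xi(\theta|k_0)$. For (2), given any $\psi \in \Sigma_\infty^\dag$ and all sufficiently large $k$, the realization claim embeds an isometric copy of $T_K^{\theta^+}$, and hence of $P$, inside $T_{\xi(\psi|k)}^{\psi^+} \simeq \Pi(\psi|k) \subset \Pi(\psi)$; interchanging $\theta$ and $\psi$ gives local isomorphism. For (1), applied with $\psi = \theta$, the density statement supplies copies of $P$ at spacing $R(K,\theta^+)$ throughout the support of each $\Pi(\theta|k)$, uniformly in $k$, which is exactly quasiperiodicity of $\Pi(\theta)$. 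The main obstacle is the realization claim, specifically transferring coprimality from the single pair $(w^*,v^*)$ to arbitrary vertex pairs while keeping the density constant $R$ independent of $w$ and $k$; the rest is bookkeeping with Theorem \ref{piusingts} and Lemma \ref{lem:canonical}.
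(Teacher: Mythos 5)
Your proposal follows essentially the same route as the paper's proof: part (3) is the same nested-refinement argument, comparing $\psi\,\Pi(\alpha\beta^{n})$ with $\Pi(\alpha\beta^{n+1})$ via symbolic splitting, and parts (1)--(2) reproduce the paper's scheme of embedding an isometric copy of a fixed canonical tiling $T_{K_2}^{v}$ into every sufficiently deep $T_{k}^{w}$ using Lemma \ref{lem:canonical} together with coprimality, and then extracting a uniform recurrence radius from the diameter $s^{-k}D$ of the supports. The only difference is that you spell out the Frobenius/bridging-path justification of the coprimality step, which the paper leaves implicit with the remark that it ``follows from the recursion in Lemma \ref{lem:canonical}.''
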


\begin{proof}
This uses Theorem \ref{piusingts}, and follows similar lines to \cite[proof of
Theorem 2]{barnsleyvince}. (1) Let $\theta\in\left[  N\right]  ^{\infty}$ be
given and let $P$ be a patch in $\Pi(\theta)$. There is a $K_{1}\in\mathbb{N}$
such that $P$ is contained in $\Pi(\theta|K_{1})$. Hence an isometric copy of
$P$ is contained in $T_{K_{2}}^{\left(  \theta|K_{1}\right)  ^{+}}$ where
$K_{2}=\xi(\theta|K_{1})$. Now choose $K_{3}\in\mathbb{N}$ so that an
isometric copy of $T_{K_{2}}^{\left(  \theta|K_{1}\right)  ^{+}}$ is contained
in each $T_{k}^{v}$ with $k\geq K_{3}.$ That this is possible follows from the
recursion in Lemma \ref{lem:canonical} and the assumption that $(\mathcal{F}%
,\mathcal{G)}$ is coprime. In particular, $T_{K_{2}}\subset T_{K_{3}+i}$ for
all $i\in\{1,2,...,a_{\max}\}$. Now let $K_{4}=K_{3}+a_{\max}$. Then, for all
$k\geq K_{4}$ and all $v\in\mathcal{V}$, the tiling $T_{k}^{v}$ is an
isometric combination of $\{T_{K_{3}+i}^{w}:$\textit{\ }$i=1,2,...,a_{\max}$,
$w\in\mathcal{V}\}$, and each of these tilings contains a copy of $T_{K_{2}%
}^{\left(  \theta|K_{1}\right)  ^{+}}$ and, in particular, a copy of $P $. Let
$D=\max\{\left\Vert x-y\right\Vert :x,y\in A\}$ be the diameter of $A$. The
support of $T_{k}$ is $s^{-k}A$ which has diameter $s^{-k}D.$ Hence
$\cup\{t\in T_{k}\}\subset$ $B_{x}(2s^{-k}D)$, the ball centered at $x$ of
radius $2s^{-k}D$, for all $x\in$ $\cup\{t\in T_{k}\}$. It follows that if
$x\in\cup\{t\in\Pi(\theta^{\prime})\}$ for any $\theta^{\prime}\in\left[
N\right]  ^{\infty}$, then $B(x,2s^{-K_{4}}D)$ contains a copy of $T_{K_{2}}$
and hence a copy of $P$. Therefore all unbounded tilings in $\mathbb{T}$ are
quasiperiodic. (2) This is essentially the same as (1). (3) Let $\theta
=\alpha\overline{\beta}=\alpha_{1}\alpha_{2}\cdots\alpha_{l}\beta_{1}\beta
_{2}\cdots\beta_{m}\beta_{1}\beta_{2}\cdots\beta_{m}\cdots\in{\Sigma}_{\infty
}^{\dag}$, and
\[
E_{\theta|k}:=f_{-(\theta|k)}s^{\xi(\theta|k)},T(\theta|k):=T_{\xi(\theta
|k)}^{(\theta|k)^{+}}%
\]
We have the increasing union
\[
\Pi(\theta)=%
{\textstyle\bigcup\limits_{j\in\mathbb{N}}}
E_{\theta|(l+jm+m)}T(\theta|(l+jm+m))
\]
We can write
\[
\Pi(\theta)=%
{\textstyle\bigcup\limits_{j\in\mathbb{N}}}
E_{\theta|(l+jm)}T(\theta|(l+jm))=f_{-\alpha}%
{\textstyle\bigcup\limits_{j\in\mathbb{N}}}
f_{-\beta}^{j}s^{\xi(\theta|(l+jm))}T(\theta|(l+jm)),
\]
and also
\[
\Pi(\theta)=%
{\textstyle\bigcup\limits_{j\in\mathbb{N}}}
E_{\theta|(l+jm+m)}T(\theta|(l+jm+m))=f_{-\alpha}f_{-\beta}%
{\textstyle\bigcup\limits_{j\in\mathbb{N}}}
f_{-\beta}^{j}s^{\xi(\theta|(l+jm+m))}T(\theta|(l+jm+m))\text{.}%
\]
Here $f_{-\beta}^{j}s^{e(\theta|(l+jm+m))}T(\theta|(l+jm+m))$ is a refinement
of $f_{-\beta}^{j}s^{e(\theta|(l+jm))}T(\theta|(l+jm))$. It follows that
$\left(  f_{-\alpha}f_{-\beta}\right)  ^{-1}\Pi(\theta)$ is a refinement of
$\left(  f_{-\alpha}\right)  ^{-1}\Pi(\theta)$, from which it follows that
$\left(  f_{-\alpha}\right)  \left(  f_{-\alpha}f_{-\beta}\right)  ^{-1}%
\Pi(\theta)$ is a refinement of $\Pi(\theta)$. Therefore, every set in
$\left(  f_{-\alpha}f_{-\beta}\right)  \left(  f_{-\alpha}\right)  ^{-1}%
\Pi(\theta)$ is a union of tiles in $\Pi(\theta)$.
\end{proof}

\section{\label{relsec}Addresses}

Addresses, both relative and absolute, are described in \cite{barnsleyvince}
for the case $\left\vert \mathcal{V}\right\vert =1$. See also \cite{bandt2}.
Here we add information and generalize. The relationship between these two
types of addresses is subtle.

Write $T_{k}^{(v)}$ to mean any of $T_{k}^{v}$ or $T_{k}.$

\begin{definition}
\label{relativedef}The \textbf{relative address} of $t\in T_{k}^{(v)}$ is
defined to be $\varnothing.\pi^{-1}s^{k}(t)\in\varnothing.\Omega_{k}^{(v)}$.
The relative address\textit{\ }of a tile $t\in T_{k}$ depends on its context,
its location relative to $T_{k}$, and depends in particular on $k\in
\mathbb{N}_{0}$. Relative addresses also apply to the tiles of $\Pi(\theta)$
for each $\theta\in\Sigma_{\ast}^{\dag}$ because $\Pi(\theta)=E_{\theta}%
T_{\xi(\theta)}^{\theta_{\left\vert \theta\right\vert }^{+}}$ where
$E_{\theta}=f_{-\theta}s^{\xi(\theta)}$ (by Theorem \ref{piusingts}) is a
known isometry applied to $T_{\xi(\theta)}$. Thus, the relative address of
$t\in\Pi(\theta)$ relative to $\Pi(\theta)$ is $\varnothing.\pi^{-1}%
f_{-\theta}^{-1}(t)$, for $\theta\in\Sigma_{\ast}^{\dag}$. When it is clear
from context we may drop the symbols \textquotedblleft$\varnothing.".$
\end{definition}

\begin{lemma}
\label{bijlemma}The tiles of $T_{k}$ are in bijective correspondence with the
set of relative addresses $\varnothing.\Omega_{k}$. The tiles of $T_{k}^{v}$
are in bijective correspondence with the set of relative addresses
$\varnothing.\Omega_{k}^{v}$.
\end{lemma}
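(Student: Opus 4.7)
The plan is to unpack Definition \ref{canondef} and reduce the lemma to injectivity of $\pi|_{\Omega_k^v}$. By definition, $T_k^v = s^{-k}\pi(\Omega_k^v) = \{s^{-k}f_\sigma(A_{\sigma^+}):\sigma\in\Omega_k^v\}$, and the candidate inverse of the relative-address map sends $\sigma\mapsto s^{-k}\pi(\sigma)$. Surjectivity is immediate, and the common dilation $s^{-k}$ is irrelevant to injectivity, so it suffices to prove that distinct $\sigma,\sigma'\in\Omega_k^v$ yield distinct subsets $\pi(\sigma),\pi(\sigma')$ of $\mathbb{R}^M$. The same argument will cover the statement for $T_k$ by replacing $\Omega_k^v$ with $\Omega_k$ throughout.

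First I would establish the prefix-free property of $\Omega_k^v$. If $\sigma$ were a proper prefix of $\sigma'$, then $\xi(\sigma)\leq\xi^-(\sigma')$ (since $\xi$ is additive in the letters and $\sigma'$ has strictly more of them); combined with the defining inequalities $\xi^-(\sigma')\leq k<\xi(\sigma)$, this forces $k<k$, contradiction. Hence for distinct $\sigma,\sigma'\in\Omega_k^v$ there is a least index $j$ with $\sigma_j\neq\sigma'_j$, and at that index $\sigma_j^-=\sigma'_j^-$ (the common terminal vertex of $\sigma|(j-1)=\sigma'|(j-1)$, equal to $v$ when $j=1$), so $f_{\sigma_j}(A_{\sigma_j^+})\cap f_{\sigma'_j}(A_{\sigma'_j^+})\subset\mathcal{C}$ by Definition \ref{def6}.

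The heart of the proof is to produce a concrete point of $\pi(\sigma)$ not in $\pi(\sigma')$. By Theorem \ref{ergodictheorem}(3) pick a disjunctive $\tau\in\Sigma_{\sigma^+}$ and set $x:=\pi(\sigma\tau)=f_\sigma(\pi(\tau))\in\pi(\sigma)$. Suppose for contradiction that $x\in\pi(\sigma')=f_{\sigma'}(A_{\sigma'^+})$, so $x=f_{\sigma'}(\pi(\tau'))$ for some $\tau'\in\Sigma_{\sigma'^+}$. Cancelling the (possibly empty) common prefix $f_{\sigma_1}\cdots f_{\sigma_{j-1}}=f_{\sigma'_1}\cdots f_{\sigma'_{j-1}}$, which is injective, yields
\[
\pi(\sigma_j\sigma_{j+1}\cdots\sigma_{|\sigma|}\,\tau)\;=\;\pi(\sigma'_j\sigma'_{j+1}\cdots\sigma'_{|\sigma'|}\,\tau'),
\]
and this common value therefore lies in $f_{\sigma_j}(A_{\sigma_j^+})\cap f_{\sigma'_j}(A_{\sigma'_j^+})\subset\mathcal{C}$.

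I expect the main obstacle to be exactly this last step: showing the contradiction, i.e.\ that the above common value cannot lie in $\mathcal{C}$. But that content is already packaged by Theorem \ref{theorem5}: the left-hand infinite path $\sigma_j\sigma_{j+1}\cdots\sigma_{|\sigma|}\tau$ inherits disjunctivity from its tail $\tau$, so its image under $\pi$ lies in $\pi(D)$, which by Theorem \ref{theorem5}(3) is disjoint from $\mathcal{C}$ — the required contradiction. This gives $\pi(\sigma)\neq\pi(\sigma')$, establishing injectivity and hence the bijections asserted in the lemma.
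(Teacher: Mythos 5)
Your proof is correct and takes the same route as the paper: reduce the lemma to injectivity of $\sigma\mapsto s^{-k}\pi(\sigma)$ on $\Omega_{k}^{(v)}$, surjectivity being definitional. The paper's own proof simply asserts that $\pi(\beta)=\pi(\gamma)$ forces $\beta=\gamma$ ``because of the OSC and $A_{v}\cap A_{w}=\emptyset$''; your prefix-free observation and the critical-set/disjunctive-point argument supply exactly the detail being waved at there (the fact you need, $\mathcal{C}\cap\pi(D)=\emptyset$, is established inside the proof of Theorem \ref{theorem5}(2)--(3) rather than being the literal statement of part (3), but it is available).
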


\begin{proof}
The correspondences are provided by the bijective map%
\[
H:\varnothing.\Omega_{k}\rightarrow T_{k}%
\]
defined by $H(\varnothing.\sigma)=s^{-k}\pi(\sigma)$. We have $T_{k}=s^{-k}%
\pi(\Omega_{k})$ so $H$ maps $\varnothing.\Omega_{k}$ onto $T_{k} $. Also $H$
is one-to-one: if $\beta\neq\gamma,$ for $\beta,\gamma\in\Sigma_{\ast}$ then
$f_{\beta}(A)\neq f_{\gamma}(A)$ because $H(\varnothing.\beta)=H(\varnothing
.\gamma)$ implies $\pi(\beta)=\pi(\gamma)$ which implies $\beta=\gamma$
because the tiling IFS\ obeys the open set condition and $A_{v}\cap
A_{w}=\emptyset$ for $v\neq w$. If the requirement $A_{v}\cap A_{w}=\emptyset$
does not hold, it may not be true that $H:\varnothing.\Omega_{k}\rightarrow
T_{k}$ is one-to-one; but it remains true that $H|_{\varnothing.\Omega_{k}%
^{v}}:\varnothing.\Omega_{k}^{v}\rightarrow T_{k}^{v}$ is bijective.
\end{proof}

For precision we should write \textquotedblleft the relative address of $t$
relative to $T_{k}$": however, when the context $t\in T_{k}$ is clear, we may
simply refer to \textquotedblleft the relative address of $t$". For example,
if $t\in ET_{k}$ where $E$ is an isometry that is either known or can be
inferred from the context, then we may say that $t$ has a unique relative address.

\begin{example}
\label{ex01}(Standard 1D binary tiling) For the IFS $\mathcal{F}%
_{0}=\{\mathbb{R};f_{1},f_{2}\}$ with $f_{1}(x)=0.5x,f_{2}(x)=$ $0.5x+0.5$ we
have $\Pi(\theta)$ for $\theta\in\Sigma_{\ast}^{\dag}$ is a tiling by copies
of the tile $t=[0,0.5]$ whose union is an interval of length $2^{\left\vert
\theta\right\vert }$ and is isometric to $T_{\left\vert \theta\right\vert }$
and represented by $tttt....t$ with relative addresses in order from left to
right
\[
\varnothing.111...11,\varnothing.111...12,\varnothing
.111...21,....,\varnothing.222...22,
\]
the length of each string (address) being $\left\vert \theta\right\vert +1.$
Notice that here $T_{k}$ contains $2^{\left\vert \theta\right\vert }-1$ copies
of $T_{0}$ (namely tt) where a copy is $ET_{0}$ where $E\in\mathcal{T}%
_{\mathcal{F}_{0}}$, the group of isometries generated by the functions of
$\mathcal{F}_{0}$.
\end{example}

\begin{example}
\label{ex02}(Fibonacci 1D tilings) $\mathcal{F}_{1}\mathcal{=\{}%
ax,a^{2}x+1-a^{2}\}$ where $a+a^{2}=1,$ $a>0$. The tiles of $\Pi(\theta)$ for
$\theta\in\Sigma_{\ast}^{\dag}$ are images under isometries (that belong to
the group of isometries generated by the IFS) applied to the tiles $[0,a]$ and
$[a,1]$ of the attractor $A=[0,1]$. Writing the tiling $T_{0}$ as $ls$ where
$l$ is a copy of $[0,a]$ and (here) $s$ is a copy of $[0,a^{2}]$ we have:

$T_{0}=ls$ has relative addresses $\varnothing.1,\varnothing.2$ (i.e. the
address of $l$ is $1$ and of $s$ is $2$)

$T_{1}=lsl$ has relative addresses $\varnothing.11,\varnothing.12,\varnothing
.2$

$T_{2}=lslls$ has relative addresses $\varnothing.111,\varnothing
.112,\varnothing.12,\varnothing.21,\varnothing.22$

$T_{3}=lsllslsl$ has relative addresses $\varnothing.1111,\varnothing
.1112,\varnothing.112,\varnothing.121,...$

We remark that $T_{k}$ comprises $F_{k+1}$ distinct tiles and contains exactly
$F_{k}$ copies of $T_{0}$, where $\{F_{k}:k\in\mathbb{N}_{0}\}$ is a sequence
of Fibonacci numbers $\{1,2,3,5,8,13,21,...\}$. Also $T_{4}=lsllslsllslls$
contains two overlapping copies of $T_{2}$.
\end{example}

The following theorem defines hierarchies of canonical tilings. It points out
that each relative address is associated with a specific hierarchy.

\begin{theorem}
\label{hierarchythm}Let $\left(  \mathcal{F},\mathcal{G}\right)  $ be a tiling
IFS. The following \textbf{hierarchy of} canonical tilings is associated with
any given relative address $\sigma\in\Sigma_{\ast}$:
\begin{equation}
F_{0}T_{0}^{\sigma_{\left\vert \sigma\right\vert }|0}\subset F_{1}%
T_{\xi\left(  \sigma_{\left\vert \sigma\right\vert }\right)  }^{\sigma
_{\left\vert \sigma\right\vert }^{+}}\subset F_{2}T_{\xi\left(  \sigma
_{\left\vert \sigma\right\vert }\sigma_{\left\vert \sigma\right\vert
-1}\right)  }^{\sigma_{\left\vert \sigma\right\vert -1}^{+}}\subset
...F_{\left\vert \sigma\right\vert -1}T_{\xi\left(  \sigma_{\left\vert
\sigma\right\vert }\sigma_{\left\vert \sigma\right\vert -1}...\sigma
_{2}\right)  }^{\sigma_{2}^{+}}\subset T_{\xi\left(  \sigma_{\left\vert
\sigma\right\vert }\sigma_{\left\vert \sigma\right\vert -1}...\sigma
_{1}\right)  }^{\sigma_{1}^{+}} \label{hierarchy}%
\end{equation}
where $F_{k}$ is the isometry $s^{-\xi\left(  \sigma\right)  }(f_{-\sigma
_{\left\vert \sigma\right\vert -k}\sigma_{\left\vert \sigma\right\vert
-k-1}...\sigma_{1}}s^{\xi(\sigma_{1}...\sigma_{\left\vert \sigma\right\vert
-k})})^{-1}s^{\xi\left(  \sigma\right)  }$ for $k=0,1,...,\xi\left(
\sigma\right)  $.
\end{theorem}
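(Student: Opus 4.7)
The plan is to derive the hierarchy \eqref{hierarchy} by iterating Lemma \ref{lem:canonical}, which at $l=1$ specialises to the statement that for every edge $e$ with $e^{-}=v$,
\[
T_{k}^{v}\supset E_{k,e}\,T_{k-\xi(e)}^{e^{+}},\qquad E_{k,e}=s^{-k}f_{e}\,s^{k-\xi(e)},
\]
and $E_{k,e}$ is an isometry. Each single containment in the chain will be recognised as one such instance, where the selected edge is read off the address $\sigma$ one letter at a time (from the right end).

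Concretely, I would start from the outermost tiling $T_{\xi(\sigma)}^{\sigma_{1}^{+}}$ and peel off one edge at a time. At step $k$ I apply the above inclusion with $e=\sigma_{j}$ at the vertex dictated by the previous step, identifying, inside the current canonical tiling, an isometrically embedded copy of a strictly smaller canonical tiling of the form $T_{\xi(\sigma_{|\sigma|}\cdots\sigma_{j+1})}^{\sigma_{j+1}^{+}}$. Iterating this $|\sigma|$ times produces the entire chain down to the initial tiling $T_{0}^{\sigma_{|\sigma|}|0}$, and the isometry accumulated after $k$ peels is the ordered composition of $k$ factors of the form $s^{-k_{j}}f_{\sigma_{j}}s^{k_{j}-\xi(\sigma_{j})}$, one contributed by each application of the inclusion.

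The final task is to check that this composition simplifies to the closed-form $F_{k}$ in the statement. This is a direct algebraic calculation: consecutive scaling exponents telescope, leaving $f_{\sigma_{1}}f_{\sigma_{2}}\cdots f_{\sigma_{|\sigma|-k}}=(f_{-\sigma_{|\sigma|-k}\cdots\sigma_{1}})^{-1}$ conjugated by appropriate powers of $s$, which matches the form of $F_{k}$ asserted. The main obstacle is the bookkeeping of vertices and exponents across the cascade — in particular, verifying that the source vertex $e^{+}$ produced by Lemma \ref{lem:canonical} at each step coincides with the superscript $\sigma_{j}^{+}$ in the next term of the chain, and that the residual scaling factor at the innermost step matches the $s^{\xi(\sigma_{1}\cdots\sigma_{|\sigma|-k})}$ inside the inverse. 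Once these index conventions are aligned, both the containments and the explicit formula for $F_{k}$ fall out of the iterated application of Lemma \ref{lem:canonical}.
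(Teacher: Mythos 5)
Your proposal is correct in substance, but it takes a genuinely different route from the paper's. The paper does not iterate Lemma \ref{lem:canonical}: it observes that the reversal $\theta=\sigma_{\left\vert \sigma\right\vert }\sigma_{\left\vert \sigma\right\vert -1}\cdots\sigma_{1}$ is a path in $\Sigma_{\ast}^{\dag}$, invokes the already-established nesting $\Pi(\theta|0)\subset\Pi(\theta|1)\subset\cdots\subset\Pi(\theta)$ from Theorem \ref{thm:three}(2), converts each term to the form $f_{-(\theta|j)}s^{\xi(\theta|j)}T_{\xi(\theta|j)}^{(\theta|j)^{+}}$ via the first identity of Theorem \ref{piusingts}, and then applies the single isometry $s^{-\xi(\sigma)}f_{\sigma}$ on the left to the entire chain; the closed form of $F_{k}$ falls out at once, with no composition of step isometries to telescope. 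Your route proves the same links one at a time, and since each inclusion $\Pi(\theta|j)\subset\Pi(\theta|j+1)$ is, after conjugation, exactly a one-edge instance of Lemma \ref{lem:canonical}, the two arguments rest on the same underlying symbolic fact; what the paper's version buys is that the isometry bookkeeping you describe as ``the main obstacle'' disappears. Two repairs are needed to make your version airtight. First, the single-edge decomposition is the case $l=0$ of Lemma \ref{lem:canonical} (it is $\Omega_{0}^{v}$, not $\Omega_{1}^{v}$, that consists of the edges leaving $v$). Second, and more substantively, Lemma \ref{lem:canonical} carries the hypothesis $k\geq a_{\max}+l$, which fails for the innermost links of the chain whenever $\xi(\sigma_{\left\vert \sigma\right\vert }\cdots\sigma_{j})<a_{\max}$, so you cannot simply cite the lemma there. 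The containment you actually need, $E_{k,e}T_{k-a_{e}}^{e^{+}}\subset T_{k}^{e^{-}}$ for $k\geq a_{e}$, does hold without that hypothesis --- it reduces to the inclusion $e\,\Omega_{k-a_{e}}^{e^{+}}\subset\Omega_{k}^{e^{-}}$, which is immediate from the definition of $\Omega_{k}$ --- but this must be stated and checked directly rather than derived from the lemma as written.
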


\begin{proof}
The chain of inclusions
\[
\Pi(\sigma_{\left\vert \sigma\right\vert }|0)\subset\Pi(\sigma_{\left\vert
\sigma\right\vert })\subset\Pi(\sigma_{\left\vert \sigma\right\vert }%
\sigma_{\left\vert \sigma\right\vert -1})\subset...\subset\Pi(\sigma
_{\left\vert \sigma\right\vert }\sigma_{\left\vert \sigma\right\vert
-1}...\sigma_{1})
\]
can be rewritten%
\begin{align*}
T_{0}^{\sigma_{\left\vert \sigma\right\vert }|0}  &  \subset f_{-\sigma
_{\left\vert \sigma\right\vert }}s^{\xi\left(  \sigma_{\left\vert
\sigma\right\vert }\right)  }T_{\xi\left(  \sigma_{\left\vert \sigma
\right\vert }\right)  }^{\sigma_{\left\vert \sigma\right\vert }^{+}}\subset
f_{-\sigma_{\left\vert \sigma\right\vert }\sigma_{\left\vert \sigma\right\vert
-1}}s^{\xi\left(  \sigma_{\left\vert \sigma\right\vert }\sigma_{\left\vert
\sigma\right\vert -1}\right)  }T_{\xi\left(  \sigma_{\left\vert \sigma
\right\vert }\sigma_{\left\vert \sigma\right\vert -1}\right)  }^{\sigma
_{\left\vert \sigma\right\vert -1}^{+}}\subset...\\
&  \subset f_{-\sigma_{\left\vert \sigma\right\vert }\sigma_{\left\vert
\sigma\right\vert -1}...\sigma_{1}}s^{\xi\left(  \sigma_{\left\vert
\sigma\right\vert }\sigma_{\left\vert \sigma\right\vert -1}...\sigma
_{1}\right)  }T_{\xi\left(  \sigma_{\left\vert \sigma\right\vert }%
\sigma_{\left\vert \sigma\right\vert -1}...\sigma_{1}\right)  }^{\sigma
_{1}^{+}}%
\end{align*}
Apply the isometry $E=s^{-\xi\left(  \sigma\right)  }f_{\sigma}$ on the left
throughout to obtain%
\begin{align*}
s^{-\xi\left(  \sigma\right)  }f_{\sigma_{1}\sigma_{2}...\sigma_{\left\vert
\sigma\right\vert }}T_{0}^{\sigma_{\left\vert \sigma\right\vert }|0}  &
\subset s^{-\xi\left(  \sigma\right)  }f_{\sigma_{1}\sigma_{2}...\sigma
_{\left\vert \sigma\right\vert -1}}s^{\xi\left(  \sigma_{\left\vert
\sigma\right\vert }\right)  }T_{\xi\left(  \sigma_{\left\vert \sigma
\right\vert }\right)  }^{\sigma_{\left\vert \sigma\right\vert }^{+}}\\
&  \subset s^{-\xi\left(  \sigma\right)  }f_{\sigma_{1}\sigma_{2}%
...\sigma_{\left\vert \sigma\right\vert -2}}s^{\xi\left(  \sigma_{\left\vert
\sigma\right\vert }\sigma_{\left\vert \sigma\right\vert -1}\right)  }%
T_{\xi\left(  \sigma_{\left\vert \sigma\right\vert }\sigma_{\left\vert
\sigma\right\vert -1}\right)  }^{\sigma_{\left\vert \sigma\right\vert -1}^{+}%
}\\
&  \subset...\\
&  \subset s^{-\xi\left(  \sigma\right)  }f_{\sigma_{1}}s^{\xi\left(
\sigma_{\left\vert \sigma\right\vert }\sigma_{\left\vert \sigma\right\vert
-1}...\sigma_{2}\right)  }T_{\xi\left(  \sigma_{\left\vert \sigma\right\vert
}\sigma_{\left\vert \sigma\right\vert -1}...\sigma_{2}\right)  }^{\sigma
_{2}^{+}}\\
&  \subset T_{\xi\left(  \sigma\right)  }^{\sigma_{1}^{+}}%
\end{align*}
which is equivalent to equation \ref{hierarchy}.
\end{proof}

\subsection{Absolute addresses}

The set of \textit{absolute addresses} associated with $(\mathcal{F}%
,\mathcal{G)}$ is
\[
\mathbb{A}:=\{\theta.\sigma:\theta\in\Sigma_{\ast}^{\dag},\,\sigma^{-}%
=\theta^{+},\,\theta_{\left\vert \theta\right\vert }\neq\sigma_{1}\}.
\]
Define $\widehat{\Pi}:\mathbb{A\rightarrow\{}t\in T:T\in\mathbb{T\}}$ by
\[
\widehat{\Pi}(\theta.\omega)=f_{-\theta}.f_{\sigma}(A_{\sigma^{+}}).
\]
The condition $\theta_{\left\vert \theta\right\vert }\neq\sigma_{1}$ is
imposed. We say that $\theta.\sigma$ is an \textit{absolute address} of the
tile $f_{-\theta}.f_{\omega}(A)$. It follows from Definition \ref{defONE} that
the map $\widehat{\Pi}$ is surjective: every tile of $\mathbb{\{}t\in
T:T\in\mathbb{T\}}$ possesses at least one absolute address.

Although tiles have unique relative addresses, relative to the $T_{k}^{v}$ to
which they are being treated as belonging, they may have many different
absolute addresses. The tile $[1,1.5]$ of Example \ref{ex01} has the two
absolute addresses $1.21$ and $21.211$, and many others.

\subsection{Relationship between relative and absolute addresses}

\begin{theorem}
\label{relabsthm}If $t\in\Pi(\theta)$ with $\theta\in\Sigma_{\ast}^{\dag}$ has
relative address $\omega$ relative to $\Pi(\theta),$ then an absolute address
of $t$ is $\theta_{1}\theta_{2}...\theta_{l}.S^{\left\vert \theta\right\vert
-l}\omega$ where $l\in\mathbb{N}$ is the unique index such that
\begin{equation}
t\in\Pi(\theta_{1}\theta_{2}...\theta_{l})\text{ and }t\notin\Pi(\theta
_{1}\theta_{2}...\theta_{l-1}) \label{*eqn}%
\end{equation}

\end{theorem}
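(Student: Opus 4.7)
My plan is to use the nested structure $\Pi(\theta_1)\subset\Pi(\theta_1\theta_2)\subset\cdots\subset\Pi(\theta)$ from Theorem \ref{thm:three}(2) to locate $t$ at the smallest level $l$ where it appears, read off its relative address there, and show that concatenating $\theta_1\cdots\theta_l$ with this relative address produces a valid member of $\mathbb{A}$.

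Since $t\in\Pi(\theta)$ and the $\Pi(\theta_1\cdots\theta_k)$ are nested increasing in $k$, there is a unique smallest $l$, $1\leq l\leq|\theta|$, with $t\in\Pi(\theta_1\cdots\theta_l)$. By Definition \ref{def10}, $t=f_{-(\theta|l)}\pi(\tau)$ for some $\tau\in\Omega_{\xi(\theta|l)}^{(\theta|l)^+}$, the relative address of $t$ in $\Pi(\theta|l)$. To identify $\tau$, I compare the two expressions for $t$: combining $t=f_{-\theta}\pi(\omega)=f_{-(\theta|l)}\pi(\tau)$ with the factorization $f_{-\theta}=f_{-(\theta|l)}\circ f_{\theta_{l+1}}^{-1}\circ\cdots\circ f_{\theta_{|\theta|}}^{-1}$ yields
\[
\pi(\omega)=f_{\theta_{|\theta|}}\circ\cdots\circ f_{\theta_{l+1}}\circ\pi(\tau)=\pi(\theta_{|\theta|}\theta_{|\theta|-1}\cdots\theta_{l+1}\tau).
\]
Injectivity of $\pi$ on $\Sigma_{\ast}$, established in the proof of Lemma \ref{bijlemma} from the OSC together with $A_v\cap A_w=\varnothing$, then forces $\omega=\theta_{|\theta|}\theta_{|\theta|-1}\cdots\theta_{l+1}\tau$, so $\tau=S^{|\theta|-l}\omega$.

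It remains to verify that $\theta_1\cdots\theta_l.\,S^{|\theta|-l}\omega$ satisfies the two defining conditions of $\mathbb{A}$. The vertex compatibility $(\theta|l)^+=(S^{|\theta|-l}\omega)^-$ is automatic from $\tau\in\Omega_{\xi(\theta|l)}^{(\theta|l)^+}$. The nondegeneracy $\theta_l\neq(S^{|\theta|-l}\omega)_1=\omega_{|\theta|-l+1}$ is where I invoke the minimality of $l$: otherwise one more cancellation gives $t=f_{-(\theta|(l-1))}f_{S^{|\theta|-l+1}\omega}(A_{\omega^+})$, and the routine $\xi$-inequalities (using $\xi^-(\omega)\leq\xi(\theta)<\xi(\omega)$ and the identity $\xi(S^{|\theta|-l+1}\omega)-\xi(\theta|(l-1))=\xi(\omega)-\xi(\theta)$) show $S^{|\theta|-l+1}\omega\in\Omega_{\xi(\theta|(l-1))}^{(\theta|(l-1))^+}$, placing $t$ in $\Pi(\theta|(l-1))$ and contradicting the choice of $l$. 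Then $\widehat{\Pi}(\theta_1\cdots\theta_l.\,S^{|\theta|-l}\omega)=f_{-(\theta|l)}f_{S^{|\theta|-l}\omega}(A_{\omega^+})=t$, as required.

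The main obstacle I expect is the bookkeeping across the cancellation: verifying the vertex identity $\theta_l^-=\omega_{|\theta|-l+1}^-$ using that $\theta$ is a path in $\mathcal{G}^\dag$ while $\omega$ is a path in $\mathcal{G}$ and that the cancelled pairs $\theta_{l+j}=\omega_{|\theta|-l-j+1}$ for $j\geq 1$ force agreement of endpoints, and then confirming the $\xi$-conditions that correctly place $S^{|\theta|-l}\omega$ and $S^{|\theta|-l+1}\omega$ in the $\Omega$-sets at levels $l$ and $l-1$ respectively.
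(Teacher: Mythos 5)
Your proposal is correct and follows essentially the same route as the paper's proof: identify the unique minimal level $l$ via the nested inclusions, then show that the trailing symbols $\theta_{l+1}\cdots\theta_{|\theta|}$ cancel against the leading symbols of $\omega$, with minimality of $l$ supplying the nondegeneracy condition $\theta_l\neq\omega_{|\theta|-l+1}$ required for membership in $\mathbb{A}$. You merely make explicit (via injectivity of $\pi$ on $\Omega_{\xi(\theta)}^{\theta^+}$ from Lemma \ref{bijlemma}) what the paper compresses into its ``$|_{cancel}$'' notation.
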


\begin{proof}
Recalling that%
\[
\Pi(\theta|0)\subset\Pi(\theta_{1})\subset\Pi(\theta_{1}\theta_{2}%
)\subset...\subset\Pi(\theta_{1}\theta_{2}...\theta_{\left\vert \theta
\right\vert -1})\subset\Pi(\theta),
\]
we have the disjoint union
\[
\Pi(\theta)=\Pi(\theta|0)\cup\left(  \Pi(\theta_{1})\backslash\Pi
(\varnothing)\right)  \cup\left(  \Pi(\theta_{1}\theta_{2})\backslash
\Pi(\theta_{1})\right)  \cup...\cup\left(  \Pi(\theta)\backslash\Pi(\theta
_{1}\theta_{2}...\theta_{\left\vert \theta\right\vert -1})\right)  .
\]
So there is a unique $l$ such that Equation (\ref{*eqn}) is true. Since
$t\in\Pi(\theta)$ has relative address $\varnothing.\sigma$ relative to
$\Pi(\theta)$ we have
\[
\varnothing.\sigma=\varnothing.\pi^{-1}f_{-\theta}^{-1}(t)
\]
and so an absolute address of $t$ is
\[
\theta.\sigma|_{cancel}=\theta.\pi^{-1}f_{-\theta}^{-1}(t)|_{cancel}%
\]
where $|_{cancel}$ means equal symbols on either side of $``."$ are removed
until there is a different symbol on either side. Since $t\in\Pi(\theta
_{1}\theta_{2}...\theta_{l})$ the terms $\theta_{l+1}\theta_{l+2}%
...\theta_{\left\vert \theta\right\vert }$ must cancel yielding the absolute
address%
\[
\theta.\sigma|_{cancel}=\theta_{1}\theta_{2}...\theta_{l}.\sigma
_{|\theta|-l+1}...\sigma_{|\sigma|}%
\]

\end{proof}

\subsection{\label{specificinflation}Inflation and deflation of $\Pi(\theta)$
when $\theta$ is known}

\begin{definition}
\label{infldef1} The \textbf{deflation operator} $\alpha$ and its inverse, the
\textbf{inflation} \textbf{operator} $\alpha^{-1}$, \textit{both restricted to
canonical tilings }$T_{k}^{v}$ where $k\in\mathbb{N}$ and $v\in\mathcal{V}$
are specified, is defined by%
\[
\alpha T_{k}^{v}=T_{k-1}^{v},\text{ }\alpha^{-1}T_{k-1}^{v}=T_{k}^{v}%
\]
for all specified $k\in\mathbb{N}$, $v\in\mathcal{V}$. The domains of $\alpha$
and $\alpha^{-1}$ are extended to include any specified isometry
$E\in\mathcal{U}$ applied to $T_{k}^{v}$, by defining
\begin{align}
\alpha ET_{k}^{v}  &  =\left(  sEs^{-1}\right)  \alpha T_{k}^{v}=\left(
sEs^{-1}\right)  T_{k-1}^{v}\label{caninflation}\\
\alpha^{-1}ET_{k-1}^{v}  &  =\left(  s^{-1}Es\right)  T_{k}^{v}\nonumber
\end{align}
for all $k\in\mathbb{N}$, $v\in\mathcal{V}$.
\end{definition}

Note that $\alpha^{m}\alpha^{n}(ET_{k}^{v})$ is well-defined and equals
$\alpha^{m+n}\left(  ET_{k}^{v}\right)  $ for all $n,m\in\mathbb{N}_{0}$ with
$n+m\geq-k$ and $n\geq-k$ where we define $\alpha^{0}$ to be an identity map.

Note that the tiling $\alpha^{-1}T_{k-1}^{v}$ may be calculated by replacing
each tile $t\in T_{k-1}^{v}$ whose relative address (relative to $T_{k-1}^{v}
$) $\varnothing.\sigma$ obeys $\xi(\sigma)=k-1$ by the set of tiles in
$T_{k}^{v}$ whose relative addresses (relative to $T_{k}^{v}$) are
$\varnothing.\sigma i$ where $i^{-}=\sigma^{+}$; and (ii) replacing each tile
$t\in T_{k-1}^{v}$ whose relative address $\varnothing.\sigma$ obeys
$\xi(\sigma)>k-1$ by $s^{-1}t$. Conversely, $\alpha T_{k}^{v}$ can be
calculated by replacing each tile in $T_{k}^{v}$ whose relative addresses
(relative to $T_{k}^{v}$) take the form $\varnothing.\sigma i$ where
$i^{-}=\sigma^{+}$ for some fixed $\sigma$ with $\xi(\sigma)=k,$ by the tile
in $T_{k-1}^{v}$ whose relative address (relative to $T_{k-1}^{v})$ is
$\varnothing.\sigma$.

\begin{definition}
\label{uniondef}The domains of $\alpha$ and $\alpha^{-1}$ are extended to
include $E\Pi(\theta),$ for any specified isometry $E\in\mathcal{U}$ and
$\theta\in\Sigma^{\dag}$, by defining:
\begin{align*}
\alpha(E\Pi(\theta|k))  &  =sEf_{-(\theta|k)}s^{\xi(\theta|k)-1}T_{\xi
(\theta|k)-1}^{(\theta|k)^{+}}\text{ for all }k\leq\left\vert \theta
\right\vert ,k\in\mathbb{N}_{0}\\
\alpha^{-1}(E\Pi(\theta|k))  &  =s^{-1}Ef_{-(\theta|k)}s^{\xi(\theta
|k)+1}T_{\xi(\theta|k)+1}^{(\theta|k)^{+}}\text{ for all }k\leq\left\vert
\theta\right\vert ,k\in\mathbb{N}_{0}\\
\alpha^{K}(E\Pi(\theta))  &  =\bigcup\limits_{k=0}^{\infty}s^{K}%
Ef_{-(\theta|k)}s^{\xi(\theta|k)-K}T_{\xi(\theta|k)-K}^{(\theta|k)^{+}}\text{
if }\left\vert \theta\right\vert =\infty,K\in\mathbb{N}_{0}\\
\alpha^{-K}(E\Pi(\theta))  &  =\bigcup\limits_{k=0}^{\infty}s^{-K}%
Ef_{-(\theta|k)}s^{\xi(\theta|k)+K}T_{\xi(\theta|k)+K}^{(\theta|k)^{+}}\text{
if }\left\vert \theta\right\vert =\infty,K\in\mathbb{N}_{0}%
\end{align*}

\end{definition}

Theorem \ref{keythm2} tells us that the unions in this definition are
increasing unions of nested sequences, and hence that the actions of $\alpha$
and $\alpha^{-1}$ are well-defined on their extended domains, provided that
the indices $K\in\mathbb{N}_{0}$, $E\in\mathcal{U},$ $\theta\in\Sigma^{\dag}$
are specified.

\begin{theorem}
\label{keythm2} Let $(\mathcal{F}$,$\mathcal{G)}$ be a tiling IFS. Then
\begin{equation}
\alpha^{K}(E\Pi(\theta|M))\subset\alpha^{K}(E\Pi(\theta|M+1))\subset...
\label{betaequation}%
\end{equation}
for all $M\in\mathbb{N},$ $K\in\mathbb{Z},$ $K<\xi(\theta|M),$ $\theta
\in\Sigma_{\infty}^{\dag}$. Then tilings produced by the actions of
$\alpha^{K}$ on $E\Pi(\theta)$ are well defined by Definition \ref{uniondef}.
Moreover, for all $\theta\in\Sigma^{\dag}$, $n\in\left[  N\right]  ,$
$k\in\mathbb{N}_{0},$ with $E_{\theta|k}:=f_{-(\theta|k)}s^{\xi(\theta|k)},$
we have the following identities%
\begin{align}
\alpha^{a_{\theta_{1}}}\Pi(\theta)  &  =s^{a_{\theta_{1}}}f_{\theta_{1}}%
^{-1}\Pi(S\theta)\label{alphaequation}\\
\alpha^{-a_{n}}\Pi(\theta)  &  =s^{-a_{n}}f_{n}\Pi(n\theta)\nonumber\\
\Pi(S^{k}\theta)  &  =\alpha^{\xi(\theta|k)}E_{\theta|k}^{-1}\Pi
(\theta)\nonumber
\end{align}
In the last equality, we require $k<|\theta|$.
\end{theorem}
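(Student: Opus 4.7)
The plan is threefold: prove the nested inclusion (\ref{betaequation}), deduce that Definition~\ref{uniondef} yields well-defined tilings, and then verify the identities (\ref{alphaequation}) by direct computation. All three parts rest on the explicit description obtained by combining $\Pi(\theta|M) = E_{\theta|M} T_{\xi(\theta|M)}^{(\theta|M)^+}$ from Theorem~\ref{piusingts} with the rule $\alpha^K(ET_k^v) = (s^K E s^{-K})T_{k-K}^v$ from Definition~\ref{infldef1} and the identity $s^j T_j^v = \pi(\Omega_j^v)$, which collapses $\alpha^K(E\Pi(\theta|M))$ to
\[
\alpha^K(E\Pi(\theta|M)) = s^K E f_{-(\theta|M)}\,\pi\!\bigl(\Omega_{\xi(\theta|M)-K}^{(\theta|M)^+}\bigr),
\]
so that its tiles are precisely the sets $s^K E f_{-(\theta|M)} f_\sigma(A_{\sigma^+})$ indexed by $\sigma\in\Omega_{\xi(\theta|M)-K}^{(\theta|M)^+}$.

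For the inclusion, I would take such a tile on the left-hand side and use $f_{-(\theta|M)} = f_{-(\theta|M+1)}f_{\theta_{M+1}}$ to rewrite it as $s^K E f_{-(\theta|M+1)} f_{\theta_{M+1}\sigma}(A_{\sigma^+})$; the task then reduces to verifying $\theta_{M+1}\sigma\in\Omega_{\xi(\theta|M+1)-K}^{(\theta|M+1)^+}$. The vertex condition holds because, as an edge of $\mathcal{G}$, $\theta_{M+1}$ runs from $(\theta|M+1)^+$ to $(\theta|M)^+$, so prepending it to a path based at $(\theta|M)^+$ produces one based at $(\theta|M+1)^+$. The defining $\xi$ inequalities follow immediately upon adding $a_{\theta_{M+1}}$ to both sides of $\xi^-(\sigma)\le \xi(\theta|M)-K<\xi(\sigma)$. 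The hypothesis $K<\xi(\theta|M)$ keeps the indexing set non-empty, and since $\xi(\theta|M+1)>\xi(\theta|M)$ both sides of the inclusion are defined. Well-definedness of $\alpha^K(E\Pi(\theta))$ for $|\theta|=\infty$ is then automatic: given any fixed $K$, eventually $K<\xi(\theta|M)$, so from that point on the sequence in Definition~\ref{uniondef} is an ascending chain of nested tilings and the union is a genuine limit.

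The three identities fall out of the same explicit formula. For the first, the $M$-th term of $\alpha^{a_{\theta_1}}\Pi(\theta)$ is $s^{a_{\theta_1}} f_{-(\theta|M)} s^{\xi(\theta|M)-a_{\theta_1}} T_{\xi(\theta|M)-a_{\theta_1}}^{(\theta|M)^+}$; substituting $f_{-(\theta|M)} = f_{\theta_1}^{-1}f_{-(S\theta|M-1)}$, $\xi(\theta|M)-a_{\theta_1} = \xi(S\theta|M-1)$, and $(\theta|M)^+ = (S\theta|M-1)^+$ collapses it to $s^{a_{\theta_1}} f_{\theta_1}^{-1}\Pi(S\theta|M-1)$, and the union over $M$ produces $s^{a_{\theta_1}}f_{\theta_1}^{-1}\Pi(S\theta)$. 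The inflation identity $\alpha^{-a_n}\Pi(\theta)=s^{-a_n}f_n\Pi(n\theta)$ is the same calculation with $\theta$ replaced by $n\theta$ and $K$ by $-a_n$, using $(n\theta)|(M{+}1) = n(\theta|M)$. For the third identity, a direct computation gives $E_{\theta|k}^{-1}E_{\theta|M} = s^{-\xi(\theta|k)}f_{-(S^k\theta|M-k)}s^{\xi(\theta|M)}$, so the $M$-th term of $\alpha^{\xi(\theta|k)}E_{\theta|k}^{-1}\Pi(\theta)$ reduces to $\Pi(S^k\theta|M-k)$ and the union over $M\ge k$ is $\Pi(S^k\theta)$. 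The main obstacle throughout is keeping the vertex labels of $\mathcal{G}$ and $\mathcal{G}^{\dag}$ straight; once $\theta_{M+1}$ is recognized as an edge of $\mathcal{G}$ directed from $(\theta|M+1)^+$ to $(\theta|M)^+$, everything else is routine bookkeeping.
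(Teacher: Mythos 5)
Your proposal is correct and follows essentially the same route as the paper's proof: both reduce the nesting in (\ref{betaequation}) to the symbolic containment $\Omega_{\xi(\theta|M+1)-K}^{(\theta|M+1)^{+}}\supset\{\theta_{M+1}\sigma:\sigma\in\Omega_{\xi(\theta|M)-K}^{(\theta|M)^{+}}\}$, and both obtain the identities by factoring $f_{-(\theta|m)}s^{\xi(\theta|m)}$ out of the increasing union using Theorem \ref{piusingts}. Your explicit verification of the $\xi$-inequalities and of the third identity via $E_{\theta|k}^{-1}E_{\theta|M}$ merely fills in steps the paper leaves as "follows similarly," so no substantive difference.
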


\begin{proof}
The crucial point is that the unions in Definition \ref{uniondef} are
increasing unions (i.e. each successive collection of tiles contains its
predecessor). The nestedness in Equation (\ref{betaequation}) follows from the
equivalence of the following statements.
\begin{align*}
s^{K}Ef_{-\left(  \theta|k\right)  }s^{\xi(\theta|k)-K}T_{\xi(\theta
|k)-K}^{(\theta|k)^{+}}  &  \subset s^{K}Ef_{-\left(  \theta|k+1\right)
}s^{\xi(\theta|k+1)-K}T_{\xi(\theta|k+1)-K}^{(\theta|k+1)^{+}}\\
f_{-\left(  \theta|k\right)  }s^{\xi(\theta|k)-K}T_{\xi(\theta|k)-K}%
^{(\theta|k)^{+}}  &  \subset f_{-\left(  \theta|k+1\right)  }s^{\xi
(\theta|k+1)-K}T_{\xi(\theta|k+1)-K}^{(\theta|k+1)^{+}}\\
s^{+\xi(\theta|k)-K}T_{\xi(\theta|k)-K}^{(\theta|k)^{+}}  &  \subset
f_{-\theta_{k+1}}s^{\xi(\theta|k+1)-K}T_{\xi(\theta|k+1)-K}^{(\theta|k+1)^{+}%
}\\
\{f_{\sigma}(A^{(\theta|k)^{+}})  &  :\sigma\in\Omega_{\xi(\theta
|k)-K}^{(\theta|k)^{+}}\}\subset f_{-\theta_{k+1}}\{f_{\sigma}(A^{(\theta
|k+1)^{+}}):\sigma\in\Omega_{\xi(\theta|k+1)-K}^{(\theta|k+1)^{+}}\}\\
\Omega_{\xi(\theta|k+1)-K}^{(\theta|(k+1))^{+}}  &  \supset\{\theta
_{k+1}\sigma:\sigma\in\Omega_{\xi(\theta|k)-K}^{(\theta|k+1)^{+}},\theta
_{k+1}^{+}=\sigma^{-}\}
\end{align*}
Next we prove that $\alpha^{\xi(\theta|m)}\Pi(\theta)=s^{\xi(\theta
|m)}f_{-\theta|m}\Pi(S^{m}\theta)$ and in particular that $\alpha
^{a_{\theta_{1}}}\Pi(\theta)=s^{a_{\theta_{1}}}f_{\theta_{1}}^{-1}\Pi
(S\theta).$
\begin{align*}
\alpha^{\xi(\theta|m)}\Pi(\theta)  &  =\bigcup\limits_{k=K}^{\infty}%
s^{\xi(\theta|m)}f_{-(\theta|k)}s^{\xi(\theta|k)-\xi(\theta|m)}T_{\xi
(\theta|k)-\xi(\theta|m)}^{\left(  \theta|k\right)  ^{+}}\\
&  =\bigcup\limits_{k=m}^{\infty}s^{\xi(\theta|m)}f_{-(\theta|k)}s^{\xi
(\theta|k)-\xi(\theta|m)}T_{\xi(\theta|k)-\xi(\theta|m)}^{\left(
\theta|k\right)  ^{+}}\\
&  =\bigcup\limits_{k=m}^{\infty}s^{\xi(\theta|m)}f_{-(\theta|m)}%
f_{-(S^{m}\theta|k-m)}s^{\xi(\theta|m)}s^{\xi(S^{m}\theta|k-m)}T_{\xi
(S^{m}\theta|k-m)}^{\left(  \theta|k\right)  ^{+}}\\
&  =s^{\xi(\theta|m)}f_{-(\theta|m)}\bigcup\limits_{k=m}^{\infty}%
f_{-(S^{m}\theta|k-m)}s^{\xi(S^{m}\theta|k-m)}T_{\xi(S^{m}\theta
|k-m)}^{\left(  \theta|k\right)  ^{+}}\\
&  =s^{\xi(\theta|m)}f_{-\left(  \theta|m\right)  }\Pi(S^{m}\theta)
\end{align*}
Proofs of the remaining two equalities in Equation (\ref{alphaequation})
follow similarly.
\end{proof}

\begin{remark}
Notice that for $\alpha$ or $\alpha^{-1}$ to act on a tiling $\Pi(\theta)$, as
in Theorem \ref{keythm2}$,$ it is necessary that $\theta$ is known: that is,
$\alpha$ acts on the function $\Pi:\Sigma^{\dag}\rightarrow\Pi(\Sigma^{\dag})$
or equivalently on the graph $\{\left(  \Pi(\theta),\theta\right)  :\theta
\in\Sigma^{\dag})\}$. For example the statement $\Pi(\theta)=\Pi(\psi)$ does
not imply $\alpha\Pi(\theta)=\alpha\Pi(\psi)$ without more information.
\end{remark}

\section{\label{secrigid}Rigid tiling IFSs}

Call a tiling $T$ an \textbf{isometric combination of canonical tilings} if it
can be written in the form
\[
T=\cup_{i\in\mathcal{I}}E_{i}T_{k_{i}}^{v_{i}}%
\]
where $\mathcal{I}$ is a countable index set, $v_{i}\in\mathcal{V}$, $k_{i}%
\in\mathbb{N}_{0}$ for all $i\in\mathcal{I}$, and it is assumed that
$E_{i},v_{i},k_{i}$ are known for all $i\in\mathcal{I}$. For example the
tiling $\Pi(\theta)$ where $\theta$ is given is an isometric combination of
canonical tilings for all $\theta\in\Sigma^{\dag}$. Inflation and deflation of
a tiling $T$ may not be well-defined when it is represented as an isometric
combination of canonical tilings. For example it can occur that $T=T_{v}%
^{k}=\cup_{i\in\mathcal{I}}E_{i}T_{k_{i}}^{v_{i}}$ but $\alpha T\neq\cup
_{i\in\mathcal{I}}\alpha\left(  E_{i}T_{k_{i}}^{v_{i}}\right)  $ as the
following example shows.

\begin{example}
\label{ex3} In $\mathbb{R}$ let $f_{1}(x)=\frac{1}{2}x,$ $f_{2}(x)=\frac{1}%
{4}x+\frac{1}{2},$ $f_{3}(x)=\frac{1}{4}x+\frac{1}{4},$ $f_{4}(x)=\frac{1}%
{2}x+2,$ $f_{5}(x)=\frac{1}{2}x+\frac{3}{2}$ and let $Ex=x-1$. Then observe
that
\begin{align*}
A_{1}  &  =f_{1}(A_{1})\cup f_{2}(A_{1})\cup f_{3}(A_{2}),A_{2}=f_{4}%
(A_{1})\cup f_{5}(A_{2})\\
T_{0}^{1}  &  =\{[0,\frac{1}{2}],[\frac{1}{2},\frac{3}{4}],[\frac{3}%
{4},1]\},T_{0}^{2}=\{[2,\frac{5}{2}],[\frac{5}{2},3]\}\\
T_{1}^{1}  &  =T_{0}^{1}\cup ET_{0}^{2},\\
\alpha T_{1}^{1}  &  =T_{0}^{1}\neq\alpha T_{0}^{1}\cup\alpha ET_{0}^{2}%
=T_{0}^{1}\cup sE[0,1]
\end{align*}
Note that $EsT_{0}^{2}\subset T_{0}^{1}$ where $Es[2,3]=[\frac{1}{2},1]$ and
$s=\frac{1}{2}.$
\end{example}

In this Section \ref{secrigid} we define the notions of a rigid tiling IFS
$(\mathcal{F},\mathcal{G)}$ and a rigid tiling $T$. We extend the definitions
of $\alpha$ and $\alpha^{-1}$ so that they act directly on tilings, in such a
way that if $T$ is a rigid tiling and $T=\cup_{i\in\mathcal{I}}E_{i}T_{k_{i}%
}^{v_{i}}$ with $v_{i}\in\mathcal{V}$ and $k_{i}\in\mathbb{N}$ is an isometric
combination, then
\[
\alpha T=\cup_{i\in\mathcal{I}}\alpha\left(  E_{i}T_{k_{i}}^{v_{i}}\right)
=\cup_{i\in\mathcal{I}}sE_{i}s^{-1}T_{k_{i}-1}^{v_{i}}%
\]
and similarly for $\alpha^{-1}$ independently of the specific representation
of $T$ as an isometric combination.

\subsection{Definitions\label{rigidsec1}}

Let $\mathcal{U}$ be any set of isometries on $\mathbb{R}^{M}$ that contains
the set of isometries $\{s^{m}f_{-\theta}f_{\sigma}:m\in\{0,1,...,a_{\max
}-1\},\theta\in{\Sigma}_{\ast}^{\dag},\sigma\in{\Sigma}_{\ast},\theta
^{+}=\sigma^{-},$ $m+\xi(\sigma)-\xi(\theta)=0\}.$ It may be a group such as
the group of translations or the Euclidean group on $\mathbb{R}^{M}$.

\begin{definition}
\label{meetdef} If $P$ and $Q$ are sets of subsets of $\mathbb{R}^{M}$ we say
\textquotedblleft$P$ \textbf{meets} $Q$\textquotedblright, to mean that $P\cap
Q\neq\varnothing$ and $\left(  \cup P\right)  \cap$ $\left(  \cup Q\right)
=\cup\left(  P\cap Q\right)  $. We also say that \textquotedblleft$P$ is a
\textbf{copy} of $Q$\textquotedblright\ to mean that there is $E\in
\mathcal{U}$ such that $P=EQ$. For example, \textquotedblleft$T_{k}^{v}$ meets
a copy of $T_{l}^{w}$\textquotedblright\ is shorthand for \textquotedblleft
there is $E\in\mathcal{U}$ such that $T_{k}^{v}\cap ET_{l}^{w}\neq\varnothing$
and the union of the set of tiles $T_{k}^{v}\cap ET_{l}^{w}$ is $s^{-k}%
A_{v}\cap Es^{-l}A_{w}$\textquotedblright$.$
\end{definition}

\begin{definition}
\label{localdef} The tilings $\mathbb{T}:=\left\{  \Pi(\theta):\theta\in
\Sigma^{\dag}\right\}  $ and the tiling IFS $(\mathcal{F},\mathcal{G)}$ are
each said to be \textbf{rigid} (with respect to $\mathcal{U}$) when the
following three statements are true for all $E\in\mathcal{U}$, and all
$v,w\in\mathcal{V}$:

A(i) if $T_{0}^{v}$ meets $Es^{k}T_{0}^{w}$ for some $k\in\mathbb{\{}%
0,1,...,a_{\max}-1\}$ then $E=Id$, $k=0,$ and $v=w$;

A(ii) if $ET_{0}^{v}$ tiles $A_{w}$ then $E=Id$ and $v=w;$

A(iii) if $A_{w}=Es^{k}A_{v}$ for some $k\in\mathbb{N}_{0},$ then $E=Id,$
$k=0,$ and $v=w.$
\end{definition}

Definition \ref{localdef} is weaker than the definition of strongly rigid in
the case $\left\vert \mathcal{V}\right\vert =1$ in \cite{barnsleyvince}. For
tiles with non-empty interiors, if $\mathcal{U}$ is the group of translations
on $\mathbb{R}^{M},$ and $a_{\max}=1$, rigidity is largely equivalent to
recognizability \cite{anderson} and to the unique composition property
\cite{solomyak2}. Rigidity extends these concepts to tilings involving more
than one scaling factor, more general sets of transformations, and to the
context of more general fractal tilings.

\begin{lemma}
\label{keylemma} Let the family of tilings $\mathbb{T}:=\left\{  \Pi
(\theta):\theta\in\Sigma^{\dag}\right\}  $ and the tiling IFS $(\mathcal{F}%
,\mathcal{G)}$ be rigid. If $s^{k}T_{0}^{v}$ meets $ET_{l}^{w}$ for some
$k,l\in\mathbb{N}_{0},$ $v,w\in\mathcal{V}$, $E\in\mathcal{U}$, then $k=0$ and
$T_{0}^{v}\subset ET_{l}^{w}$.
\end{lemma}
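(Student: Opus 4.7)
The plan is to induct on $l$, with rigidity condition A(i) handling the base case and Lemma \ref{lem:canonical} providing the reduction in the inductive step. I first extract structural information from any shared tile. If $t^* \in s^k T_0^v \cap E T_l^w$, write $t^* = s^k f_e(A_{e^+}) = E s^{-l} f_\sigma(A_{\sigma^+})$ for some edge $e$ with $e^- = v$ and some $\sigma \in \Omega_l^w$. Equating the two descriptions gives $A_{e^+} = F(A_{\sigma^+})$ where $F$ is a similitude with scaling factor $s^j$, $j = \xi(\sigma) - l - k - a_e$. Because each attractor component has positive finite Hausdorff measure (Theorem \ref{theorem5}(1)), and because A(iii) precludes any nontrivial scaling identification of components, the scaling is forced to be trivial: $j = 0$, so $\xi(\sigma) - l = k + a_e$, $F$ is an isometry, and a second application of A(iii) yields $e^+ = \sigma^+$, $F = Id$, and an explicit formula for $E$ in terms of $k, e, l, \sigma$. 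The constraint $\xi(\sigma) - l \leq a_{\max}$ also gives $k \leq a_{\max} - 1$, placing $k$ in the range A(i) addresses.

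For the base case $l = 0$, the hypothesis is $s^k T_0^v$ meets $E T_0^w$. Applying $E^{-1}$ throughout, and using that ``meets'' is symmetric and preserved under isometries, yields $T_0^w$ meets $E^{-1} s^k T_0^v$. With $k \in \{0, \ldots, a_{\max} - 1\}$ and $E^{-1} \in \mathcal{U}$ (as in the paper's main examples where $\mathcal{U}$ is a group), rigidity A(i) forces $E^{-1} = Id$, $k = 0$, and $v = w$; consequently $E = Id$ and $s^k T_0^v = T_0^v = E T_0^w$, yielding the desired containment.

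For the inductive step $l \geq 1$, I decompose $T_l^w$ via Lemma \ref{lem:canonical}, iterating the decomposition if necessary to obtain pieces of level strictly less than $l$: $T_l^w = \bigsqcup_\omega E_{l,\omega} T_{l-\xi(\omega)}^{\omega^+}$, so that $E T_l^w = \bigsqcup_\omega (E E_{l,\omega}) T_{l-\xi(\omega)}^{\omega^+}$. The shared tile $t^*$ lies in exactly one piece, say $Q_0 = (E E_{l,\omega_0}) T_{l-\xi(\omega_0)}^{\omega_0^+}$. The meets hypothesis restricts to $Q_0$: the measure-theoretic disjointness of piece supports (Theorem \ref{theorem5}(5)) and the fact that any tile of $s^k T_0^v$ with positive Hausdorff measure of overlap with $Q_0$'s support must coincide with a tile of $Q_0$ (Theorem \ref{thm:three}(1)) together show that $s^k T_0^v$ meets $Q_0$. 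The inductive hypothesis at level $l - \xi(\omega_0) < l$ then delivers $k = 0$ and $T_0^v \subset Q_0 \subset E T_l^w$.

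The main obstacle is the range $l \in \{1, \ldots, a_{\max} - 1\}$, where Lemma \ref{lem:canonical}'s decomposition requires $l \geq a_{\max}$ and hence does not directly apply. In this small-$l$ regime, one extends the decomposition via the degenerate level $T_{-1}^v = s A_v$ built into the canonical-tiling framework, or exhibits directly a level-$0$ sub-structure of $T_l^w$ containing $t^*$ (via a prefix of $\sigma$ with $\xi$-value equal to $l$, when one exists) to which A(i) applies. A secondary subtlety is that ``meets'' must restrict honestly to a sub-piece; this uses the measure-zero nature of critical sets and inner boundaries (Theorem \ref{theorem5}(5)), which prevents any tile of $s^k T_0^v$ from straddling two pieces on a positive-measure set.
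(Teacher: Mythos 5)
Your overall strategy (induction on $l$, with A(i) closing the base case) matches the paper's, but your inductive step diverges from it in a way that leaves two genuine gaps. The paper's inductive step is a dichotomy: either $s^{k}T_{0}^{v}$ meets some copy of $T_{0}^{x}$ contained in $ET_{L+1}^{w}$, in which case A(i) immediately gives $k=0$ and $T_{0}^{v}\subset ET_{L+1}^{w}$; or it meets no such copy, in which case the deflation $\alpha$ of Definition \ref{infldef1} is applied to $ET_{L+1}^{w}$ while $s^{k}T_{0}^{v}$ is merely shrunk by $s$, producing ``$s^{k+1}T_{0}^{v}$ meets $(sEs^{-1})T_{L}^{w}$'', whereupon the inductive hypothesis forces $k+1=0$, a contradiction. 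This works uniformly for every $l\geq 1$ and never requires the ``meets'' relation to restrict to a sub-piece. Your replacement of this step by the decomposition of Lemma \ref{lem:canonical} is where the trouble lies.

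First, the restriction of ``meets'' to the piece $Q_{0}$ containing the shared tile does not follow from your measure-theoretic argument. Definition \ref{meetdef} demands the exact set identity $(\cup P)\cap(\cup Q)=\cup(P\cap Q)$, not an identity up to Hausdorff-measure zero. If a second shared tile $t^{**}$ lies in an adjacent piece $Q_{1}$ and touches the support of $Q_{0}$ along a portion of the critical set not contained in $t^{*}$, then $(\cup s^{k}T_{0}^{v})\cap(\cup Q_{0})$ strictly contains $\cup(s^{k}T_{0}^{v}\cap Q_{0})$ and the hypothesis of your inductive step fails; Theorem \ref{theorem5}(5) says such overlaps are null, which is irrelevant to an exact equality of sets. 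Second, the range $1\leq l\leq a_{\max}-1$ is not a peripheral technicality: there Lemma \ref{lem:canonical} is unavailable and $T_{l}^{w}$ genuinely contains ``large'' tiles $s^{-l}f_{\sigma}(A_{\sigma^{+}})$ with $|\sigma|=1$ and $a_{\sigma_{1}}>l$ that belong to no isometric copy of a lower-level canonical tiling, so the shared tile $t^{*}$ may fail to lie in any piece $Q_{0}$ at all. You flag this but your two proposed repairs (invoking $T_{-1}^{v}$, or ``exhibiting a level-$0$ sub-structure'') are not carried out, and the second one presupposes a prefix of $\sigma$ with $\xi$-value exactly $l$, which need not exist. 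Your preliminary scaling analysis forcing $k\leq a_{\max}-1$ before invoking A(i) is a sound and welcome addition (the paper leaves it implicit), but as it stands the inductive engine of your proof does not run.
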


\begin{proof}
If $s^{k}T_{0}^{v}$ meets $ET_{0}^{w}$ then $k=0,$ $E=Id,$ $v=w.$ In
particular, if $s^{k}T_{0}^{v}$ meets $ET_{0}^{w}$ then $k=0,$ and $T_{0}%
^{v}\subset ET_{0}^{w}.$ Suppose that if $s^{k}T_{0}^{v}$ meets $ET_{l}^{w}$
then $k=0,$ and $T_{0}^{v}\subset ET_{l}^{w},$ for all $l=0,1,2,..L$. If
$s^{k}T_{0}^{v}$ meets $ET_{L+1}^{w},$ but does not meet any copy of
$T_{0}^{x}$ contained in $ET_{L+1}^{w}$ we can apply $\alpha$ to $ET_{l}^{w}$
and at the same time shrink $s^{k}T_{0}^{v}$ without modification, yielding
that $s^{k+1}T_{0}^{v}$ meets $T_{l-1}^{w}$ where $E^{\prime}=sEs^{-1}%
\in\mathcal{U}$. This implies $k=-1$ which is false. We conclude that
$s^{k}T_{0}^{v}$ meets a copy of $T_{0}^{x}$ contained in $ET_{L+1}^{w}$ which
implies $k=0$ and $T_{0}^{v}\subset ET_{L+1}^{w}$.
\end{proof}

\begin{theorem}
\label{equivalence} If the family of tilings $\mathbb{T}:=\left\{  \Pi
(\theta):\theta\in\Sigma^{\dag}\right\}  $ and the tiling IFS $(\mathcal{F}%
,\mathcal{G)}$ are rigid then the following four statements are true.

B(i) if $E\in\mathcal{U},$ $v,w\in\mathcal{V},$ and $T_{0}^{v}\ $meets
$ET_{0}^{w}$, then $v=w$ and $E=Id$;

B(ii) if $E\in\mathcal{U}$, $v,w\in\mathcal{V},$ and $k,l\in\mathbb{N}_{0}$
are such that $T_{k}^{v}\ $meets $ET_{l}^{w},$ then%
\[
\text{either }T_{k}^{v}\subset ET_{l}^{w}\text{ or }ET_{l}^{w}\subset
T_{k}^{v}%
\]

B(iii) if $E\in\mathcal{U}$, $v,w\in\mathcal{V},$ and $ET_{0}^{v}$ tiles
$A_{w}$, then $E=Id$ and $v=w$;

B(iv) if $A_{w}=Es^{k}A_{v}$ for some $E\in\mathcal{U},v\in\mathcal{V}%
,k\in\mathbb{N}_{0},\ $then $E=Id,k=0,$ and $v=w.$

If $\left\vert \mathcal{V}\right\vert =1$ or if each $T_{0}^{v}$ possesses a
tile isometric to $s^{a_{\max}}A_{w}$, for some $w$ that may depend on $v$,
then the two sets of conditions, \{A(i),A(ii),A(iii)\} and
\{B(i),B(ii),B(iii),B(iv)\} are equivalent.
\end{theorem}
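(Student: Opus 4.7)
The plan is to handle the forward implication (A's yield B's) first, then the reverse implication under the extra hypothesis.

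For the forward direction, most of the content is immediate: B(i) is the specialization of A(i) to $k = 0$, B(iii) is literally A(ii), and B(iv) is literally A(iii). Only B(ii), the containment dichotomy for two canonical tilings that meet, requires real work. Suppose $T_k^v$ meets $E T_l^w$ with $E \in \mathcal{U}$; by symmetry, assume $k \le l$ (otherwise swap the roles of the two tilings, replacing $E$ with $E^{-1}$ and conjugating). I would apply the deflation operator $\alpha^k$ term-by-term to both tilings: by iterating Definition \ref{infldef1}, $\alpha^k T_k^v = T_0^v$ and $\alpha^k(E T_l^w) = (s^k E s^{-k}) T_{l-k}^w$. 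Setting $E' := s^k E s^{-k}$, deflation is a geometric contraction combined with symbolic amalgamation that preserves the meets relation, so $T_0^v$ meets $E' T_{l-k}^w$. Lemma \ref{keylemma} now applies directly (with its scaling parameter equal to $0$) and yields $T_0^v \subset E' T_{l-k}^w$; applying the inflation $\alpha^{-k}$, which likewise preserves containment, gives $T_k^v \subset E T_l^w$, establishing B(ii).

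For the converse direction under the extra hypothesis, A(ii) is literally B(iii) and A(iii) is literally B(iv), so only A(i) needs attention. The case $k = 0$ is exactly B(i). For $k \in \{1,\dots,a_{\max}-1\}$, I would assume toward a contradiction that $T_0^v$ meets $E s^k T_0^w$. Because the two tilings meet, there is a tile $t$ common to both; writing $t = f_e(A_{e^+})$ as a tile of $T_0^v$ (so $e^- = v$) and $t = E s^k f_{e'}(A_{e'^+})$ as a tile of $E s^k T_0^w$ (so $e'^- = w$), the composite $(E s^k f_{e'})^{-1} f_e$ is a similitude of scaling factor $s^{a_e - k - a_{e'}}$ mapping $A_{e^+}$ onto $A_{e'^+}$. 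The additional hypothesis --- either $|\mathcal{V}| = 1$, or the existence in each $T_0^{v'}$ of a tile isometric to $s^{a_{\max}} A_{w'(v')}$ --- is used to ensure that the resulting similitude can be recast as an element of $\mathcal{U}$ times a non-negative power of $s$, at which point B(iv) forces $a_e = k + a_{e'}$, $e^+ = e'^+$, and the composite to be $Id$. Promoting this tile-level match to a full meeting of $T_0^v$ against a genuine canonical tiling and then invoking B(i) collapses the remaining freedom to $v = w$, $E = Id$, $k = 0$, contradicting $k \ge 1$.

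The principal obstacle is the converse step at $k \ge 1$: passing from a single shared tile between the non-canonical tilings $T_0^v$ and $E s^k T_0^w$ to a statement about canonical tilings to which B(i)--B(iv) can be applied. The extra hypothesis is designed precisely to prevent pathological scale mismatches from obstructing this promotion, by guaranteeing that the minimum scale $s^{a_{\max}}$ is realized as an actual tile of each $T_0^{v'}$ and hence available for the edge-matching argument. A subsidiary technical point throughout is the requirement that intermediate similitudes constructed during the deflation/inflation arguments belong to $\mathcal{U}$, which is ensured by the closure assumptions built into the definition of $\mathcal{U}$ in Subsection \ref{rigidsec1}.
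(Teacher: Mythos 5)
Your proposal matches the paper's approach: the published proof of this theorem is the single sentence ``Follows from Lemma \ref{keylemma}'', and your argument makes that reduction explicit --- B(i), B(iii), B(iv) as direct specializations of A(i), A(ii), A(iii), and B(ii) obtained by deflating to a configuration where Lemma \ref{keylemma} applies, then inflating the resulting containment back. Your write-up is in fact more detailed than the paper's; the two steps you assert without justification (that term-by-term deflation preserves the ``meets'' relation between the two tilings, and that the auxiliary similitudes arising in the converse direction and in the $k>l$ swap actually belong to $\mathcal{U}$) are precisely the points the authors also leave implicit.
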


\begin{proof}
Follows from Lemma \ref{keylemma}.
\end{proof}

\begin{corollary}
\label{contain} Let the family of tilings $\mathbb{T}:=\left\{  \Pi
(\theta):\theta\in\Sigma^{\dag}\right\}  $ and the tiling IFS $(\mathcal{F}%
,\mathcal{G)}$ be rigid. If $\theta,\varphi\in\Sigma_{\ast}^{\dag},$ and
$\Pi(\theta)\ $meets $E\Pi(\varphi),$ then
\[
\text{either }\Pi(\theta)\subset E\Pi(\varphi)\text{ or }E\Pi(\varphi
)\subset\Pi(\theta)
\]

\end{corollary}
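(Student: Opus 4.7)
My plan is to reduce the claim directly to part B(ii) of Theorem \ref{equivalence}, which already supplies the dichotomy for canonical tilings $T_{k}^{v}$. First, since $\theta,\varphi\in\Sigma_{\ast}^{\dag}$, Theorem \ref{piusingts} represents both sides as single isometric images of canonical tilings:
\[
\Pi(\theta)=E_{\theta}T_{\xi(\theta)}^{\theta^{+}},\qquad E\Pi(\varphi)=(EE_{\varphi})T_{\xi(\varphi)}^{\varphi^{+}},
\]
where $E_{\theta}=f_{-\theta}s^{\xi(\theta)}$ and $E_{\varphi}=f_{-\varphi}s^{\xi(\varphi)}$ are isometries in $\mathcal{U}$.

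Next I would apply the common isometry $E_{\theta}^{-1}$ to both families. The \textquotedblleft meets\textquotedblright\ relation of Definition \ref{meetdef} is preserved under simultaneous application of an isometry, so the hypothesis becomes: $T_{\xi(\theta)}^{\theta^{+}}$ meets $E'T_{\xi(\varphi)}^{\varphi^{+}}$, where $E':=E_{\theta}^{-1}EE_{\varphi}$. At this point Theorem \ref{equivalence}(B(ii)), applied to these two canonical tilings, yields either $T_{\xi(\theta)}^{\theta^{+}}\subset E'T_{\xi(\varphi)}^{\varphi^{+}}$ or the reverse inclusion. Transporting the chosen inclusion back by $E_{\theta}$ converts it into one of the two conclusions in the corollary, since $E_{\theta}E'=EE_{\varphi}$.

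The main obstacle I anticipate is verifying that Theorem \ref{equivalence}(B(ii)) is legitimately available for the composite $E'$, i.e.\ that $E'\in\mathcal{U}$. When $\mathcal{U}$ is a group (the typical case, for instance the Euclidean group or the translation group), this is automatic from $E,E_{\theta},E_{\varphi}\in\mathcal{U}$. In the general setting of Subsection \ref{rigidsec1}, where $\mathcal{U}$ is only required to contain the basic isometries $s^{m}f_{-\theta'}f_{\sigma'}$, one must either invoke a closure hypothesis on $\mathcal{U}$ or unwind $E'=s^{-\xi(\theta)}f_{\theta}Ef_{-\varphi}s^{\xi(\varphi)}$ into that standard form by direct calculation, using the edge-indexed structure of $\theta$ and $\varphi$. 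An alternative fallback, should that verification prove delicate, would be to apply Lemma \ref{keylemma} tile by tile: pick a common tile in $\Pi(\theta)\cap E\Pi(\varphi)$, use Theorem \ref{thm:three}(5) to write each tile as $s^{m}(\text{neighbor isometry})A_{v}$ with the neighbor isometry already lying in the generating set of $\mathcal{U}$, and propagate the containment given by the lemma through the finitely many tiles of the two finite tilings.
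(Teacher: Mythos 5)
Your proposal is correct and follows essentially the route the paper intends: the paper gives no explicit proof of this corollary, presenting it as an immediate consequence of Theorem \ref{equivalence}, and your reduction via Theorem \ref{piusingts} to statement B(ii) for the canonical tilings $T_{\xi(\theta)}^{\theta^{+}}$ and $E'T_{\xi(\varphi)}^{\varphi^{+}}$ is exactly that argument made explicit. Your flagged concern about whether $E'=E_{\theta}^{-1}EE_{\varphi}$ lies in $\mathcal{U}$ is a genuine subtlety the paper glosses over (its $\mathcal{U}$ need not be a group), and your fallback via Lemma \ref{keylemma} is a reasonable way to cover that case.
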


\subsection{\label{extendinflation}Inflation and deflation of rigid tilings}

Let $\mathbb{Q}$ be the set of all tilings $T$ that can be written in the form
$T=\cup_{i\in\mathcal{I}}E_{i}T_{k_{i}}^{v_{i}}$ where $i$ is a countable
index set, $E_{i}\in\mathcal{U}$, $k_{i}\in\mathbb{N}_{0},$ and $v_{i}%
\in\mathcal{V}$ for all $i\in\mathcal{I}$. Let $\mathbb{Q}^{\prime}$ be the
set of all tilings $T^{\prime}$ that can be written in the form $T^{\prime
}=\cup_{i\in\mathcal{I}}E_{i}T_{k_{i}-1}^{v_{i}}$ where $i$ is a countable
index set, $E_{i}\in\mathcal{U}$, $k_{i}\in\mathbb{N}_{0},$ and $v_{i}%
\in\mathcal{V}$ for all $i\in\mathcal{I}$.

The following definition extends the domains of $\alpha$ and $\alpha^{-1}$ to
$\mathbb{Q}$ and $\mathbb{Q}^{\prime}$ respectively, in the case of rigid
tilings. It generalizes the definition of strongly rigid in
\cite{barnsleyvince} to the graph directed case. It relies on the fact,
assured by Lemma \ref{keylemma}, that no \textquotedblleft spurious
copies\textquotedblright\ of any $T_{0}^{v}$ can occur in any tiling in
$\mathbb{Q}$.

\begin{definition}
\label{inflationdef} Let $\left(  \mathcal{F},\mathcal{G}\right)  $ be a rigid
tiling IFS. \textbf{Deflation} $\alpha:\mathbb{Q}\rightarrow\mathbb{Q}%
^{\prime}$ is defined by $\alpha(T)=\{\alpha(t):t\in T\}$ for all $t\in
T\in\mathbb{Q}$, where
\[
\alpha(t):=\left\{
\begin{array}
[c]{cc}%
sEA_{v} & \text{if }t\in ET_{0}^{v}\subset T\text{ for some }E\in
\mathcal{U}\text{, }v\in\mathcal{V}\text{,}\\
st & \text{otherwise}%
\end{array}
\right.
\]
$ET_{0}^{v}$ is called the set of \textbf{partners }of $t\in ET_{0}^{v}$. If
$t_{1}$ and $t_{2}$ are partners of $t,$ then $\alpha(t_{1})=\alpha(t_{2})$.
\textbf{Inflation} $\alpha^{-1}:\mathbb{Q}^{\prime}\mathbb{\rightarrow Q}$ is
defined by $\alpha^{-1}T=\{\alpha^{-1}(t):t\in T\}$ for all $t\in
T\in\mathbb{Q}^{\prime}$, where%
\[
\alpha^{-1}(t):=\left\{
\begin{array}
[c]{cc}%
s^{-1}t & \text{if }t\neq EsA_{v}\text{ for any }E\in\mathcal{U}\text{, }%
v\in\mathcal{V}\text{,}\\
ET_{0}^{v} & \text{if }t=EsA_{v}%
\end{array}
\right.
\]
for all $T\in\mathbb{Q}^{\prime}$.
\end{definition}

Conditions \textit{A(ii) }and \textit{A(iii)} ensure that inflation,
represented by the operator $\alpha^{-1},$ is well-defined on $\mathbb{Q}%
^{\prime}$. Call a tile in any tiling in $\mathbb{Q}^{\prime}$ which is
isometric to $sA_{v}$ for some $v\in\mathcal{V}$ a \textbf{large tile}. To
inflate a tiling $T^{\prime}$ in $\mathbb{Q}^{\prime}$, first replace each
large tile in $T^{\prime}$ by the corresponding unique (by \textit{A(ii)})
copy of $sT_{0}^{v}$ (for all $v$), yielding a set of sets $T^{\prime}$, and
then apply the similitude $s^{-1}$ to $T^{\prime}$ to yield $T\in\mathbb{Q}$.
Similarly, deflation is well-defined, because by Lemma \ref{keylemma} no
copies of $s^{k}T_{0}^{v}$ with $k>0$ can occur in any $T_{l}^{w}$.

Condition $A(iii)$ ensures that, given the canonical tiling $T_{k}^{v},$ we
can infer the values of the indices $v$ and $k$. In the case $a_{\max}=1,$ a
consequence of rigidity (with respect to the translation group) is that
canonical tilings are recognizable, as discussed in Section \ref{relationsec}.

For rigid tilings $\alpha:\mathbb{Q}\rightarrow\mathbb{Q}^{\prime}$ and
$\alpha^{-1}:\mathbb{Q}^{\prime}\mathbb{\rightarrow Q}$ are well-defined.
Every copy of $T_{0}^{w}$ in $T_{k}^{v}$ is related via $\alpha^{-1}$ to a
large tile in $T_{k-1}^{v}$. There is a one-to-one correspondence between the
large tiles in $T_{k-1}^{v}$ and copies of $T_{0}^{x}$ in $T_{k}^{v}.$ In
particular we find that $\alpha$ and $\alpha^{-1}$ in Definition
\ref{inflationdef} are consistent with the definition in Section
\ref{specificinflation}. The following theorem says that, for rigid tilings,
inflation and deflation are well defined, in particular they interact in an
unconfusing manner on isometric combinations.

\begin{theorem}
\label{welldefined} If $(\mathcal{F}$,$\mathcal{G)}$ is rigid, then the
following statements are true for all $E,E^{\prime}\in\mathcal{U}$,
$k,l\in\mathbb{N}$, $v,w\in\mathcal{V}$, and index sets $\mathcal{I}%
,\mathcal{I}^{\prime},\mathcal{J},\mathcal{J}^{\prime}$,

(i) $ET_{0}^{v}\subset T_{k}^{w}$ if and only if $sEA_{v}\in T_{k-1}^{w}$

(ii) $\alpha$ and $\alpha^{-1}$ in Definition \ref{inflationdef} are
consistent with Definition \ref{infldef1} in Section \ref{specificinflation},
that is
\[
\alpha(ET_{k}^{v})=\bigcup\limits_{t\in ET_{k}^{v}}\alpha(t)\text{ and }%
\alpha^{-1}(ET_{k}^{v})=\bigcup\limits_{t\in ET_{k}^{v}}\alpha^{-1}(t)
\]

(iii) if $ET_{k}^{v}\subset E^{\prime}T_{l}^{w}$, then
\[
\alpha\left(  ET_{k}^{v}\right)  \subset\alpha\left(  E^{\prime}T_{l}%
^{w}\right)  \text{ and }\alpha^{-1}\left(  ET_{k}^{v}\right)  \subset
\alpha^{-1}\left(  E^{\prime}T_{l}^{w}\right)
\]

(iv) if $\cup_{i\in\mathcal{I}}E_{i}T_{k_{i}}^{v_{i}}\in\mathbb{Q}$, and
$\cup_{j\in\mathcal{J}}E_{j}T_{k_{j}}^{v_{j}}\in\mathbb{Q}^{\prime}$, then
\[
\alpha(\cup_{i\in\mathcal{I}}E_{i}T_{k_{i}}^{v_{i}})=\cup_{i\in\mathcal{I}%
}\alpha(E_{i}T_{k_{i}}^{v_{i}})\text{ and }\alpha^{-1}(\cup_{j\in\mathcal{J}%
}E_{j}T_{k_{j}}^{v_{j}})=\cup_{j\in\mathcal{J}}\alpha^{-1}(E_{j}T_{k_{j}%
}^{v_{j}})
\]

(v) if $\cup_{i\in\mathcal{I}}E_{i}T_{k_{i}}^{v_{i}}\subset\cup_{i\in
\mathcal{I}^{\prime}}E_{i}^{\prime}T_{k_{i}^{\prime}}^{v_{i}^{\prime}}%
\in\mathbb{Q}$ and $\cup_{j\in\mathcal{J}}E_{j}T_{k_{j}}^{v_{j}}\subset
\cup_{j\in\mathcal{J}^{\prime}}E_{j}^{\prime}T_{k_{j}^{\prime}}^{v_{j}%
^{\prime}}\in\mathbb{Q}^{\prime},$ then
\[
\alpha(\cup_{i\in\mathcal{I}}E_{i}T_{k_{i}}^{v_{i}})\subset\alpha(\cup
_{i\in\mathcal{I}^{\prime}}E_{i}^{\prime}T_{k_{i}^{\prime}}^{v_{i}^{\prime}%
})\text{ and }\alpha^{-1}(\cup_{j\in\mathcal{J}}E_{j}T_{k_{j}}^{v_{j}}%
)\subset\alpha^{-1}(\cup_{j\in\mathcal{J}^{\prime}}E_{j}^{\prime}%
T_{k_{j}^{\prime}}^{v_{j}^{\prime}})
\]

\end{theorem}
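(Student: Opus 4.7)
The plan is to prove (i)--(v) in order, with (i) doing the real geometric work and (ii)--(v) following by formal manipulation from the \emph{locality} of the partner relation under rigidity. Throughout, Lemma \ref{keylemma} is the engine: it rules out any scaled copy $s^{k}T_{0}^{v}$ with $k>0$ sitting inside an $E T_{l}^{w}$, and thus prevents ``spurious partnerings'' of the kind illustrated in Example \ref{ex3}.

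For (i), I would apply the Symbolic Splitting Lemma \ref{lemma:split} to $\Omega_{k}^{w}$ versus $\Omega_{k-1}^{w}$. A word $\sigma\in\Omega_{k-1}^{w}$ with $\xi(\sigma)=k$ is split into $\{\sigma e:e^{-}=\sigma^{+}\}\subset\Omega_{k}^{w}$. Translating through $\pi$ and rescaling by $s^{-k}$, the single tile $s^{-(k-1)}f_{\sigma}(A_{\sigma^{+}})$ of $T_{k-1}^{w}$ -- which is a large tile, since $f_{\sigma}$ has ratio $s^{k}$ so $s^{-(k-1)}f_{\sigma}=sE$ with $E:=s^{-k}f_{\sigma}\in\mathcal{U}$ -- corresponds in $T_{k}^{w}$ to the cluster $\{s^{-k}f_{\sigma}f_{e}(A_{e^{+}}):e^{-}=\sigma^{+}\}=ET_{0}^{\sigma^{+}}$. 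This gives the direction large tile $\Rightarrow$ copy of $T_{0}^{v}$. For the converse, if $ET_{0}^{v}\subset T_{k}^{w}$, Lemma \ref{keylemma} (together with the OSC and $A_{v}\cap A_{w}=\varnothing$) forces the partition of $EA_{v}$ induced by $ET_{0}^{v}$ to coincide with the splitting partition, so $E=s^{-k}f_{\sigma}$ for a unique $\sigma\in\Omega_{k-1}^{w}$ with $\xi(\sigma)=k$ and $\sigma^{+}=v$.

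Claim (ii) is then bookkeeping: for $t\in ET_{k}^{v}$, Definition \ref{inflationdef} sends $t$ to $sE'A_{x}$ if $t$ lies in some partner group $E'T_{0}^{x}\subset ET_{k}^{v}$ and to $st$ otherwise. By (i), assembling these outputs over all $t$ yields exactly $(sEs^{-1})T_{k-1}^{v}$, matching Definition \ref{infldef1}; the $\alpha^{-1}$ case is symmetric via the bijection in (i). For (iii), suppose $ET_{k}^{v}\subset E'T_{l}^{w}$: by Lemma \ref{keylemma} no two partner groups can conflict, so every partner group inside $ET_{k}^{v}$ is simultaneously a partner group inside $E'T_{l}^{w}$. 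Hence the tile-wise deflations agree on the overlap, and the inclusion survives $\alpha$ (and dually $\alpha^{-1}$).

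For (iv) and (v), the key observation is that whether a tile $t\in\bigcup_{i}E_{i}T_{k_{i}}^{v_{i}}$ belongs to a partner group $E'T_{0}^{x}$ is a \emph{local} datum, and by Lemma \ref{keylemma} it yields the same group whether computed inside a single summand or across the whole union. Consequently $\alpha$ distributes over arbitrary countable unions in $\mathbb{Q}$ (and $\alpha^{-1}$ over those in $\mathbb{Q}'$), giving (iv); combining (iv) with (iii) tile by tile yields (v). The main obstacle is precisely the well-definedness of the partner relation, and this is exactly what Lemma \ref{keylemma} (a direct consequence of rigidity) supplies; once (i) is established, parts (ii)--(v) are essentially routine.
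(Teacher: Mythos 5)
Your proposal is correct and follows essentially the same route as the paper: the paper's entire proof is the one-line remark that the statements follow from Theorem \ref{equivalence}, which itself rests on Lemma \ref{keylemma}, and your argument is precisely that reduction carried out in detail (with the Symbolic Splitting Lemma \ref{lemma:split} making the large-tile/partner-group correspondence in part (i) explicit, after which (ii)--(v) are the same formal bookkeeping the paper leaves to the reader).
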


\begin{proof}
These statements follow from Theorem \ref{equivalence}.
\end{proof}

\begin{corollary}
\label{corollaryX} Let $(\mathcal{F}$,$\mathcal{G)}$ be rigid and $\Pi
(\theta)\subset E\Pi(\psi)$, for some $\theta,\psi\in\Sigma^{\dag}$. Then
$\alpha^{k}\Pi(\theta)\subset s^{k}Es^{-k}\alpha^{k}\Pi(\psi)$ for all
$k\in\mathbb{N}$, with $k\leq\min\{\xi(\theta),\xi(\psi)\}$ when both $\theta$
and $\psi$ lie in $\Sigma_{\ast}^{\dag}$. Also $\alpha^{-k}\Pi(\theta)\subset
s^{-k}Es^{k}\alpha^{-k}\Pi(\psi)$.
\end{corollary}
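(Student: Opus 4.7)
The plan is to reduce the corollary to the distributivity and monotonicity of deflation and inflation on isometric combinations of canonical tilings given by Theorem \ref{welldefined}. First, express both sides of the containment $\Pi(\theta) \subset E\Pi(\psi)$ as isometric combinations of canonical tilings. By Theorem \ref{piusingts}, for $\theta \in \Sigma_\ast^\dag$ we have $\Pi(\theta) = E_\theta T_{\xi(\theta)}^{\theta^+}$ with $E_\theta = f_{-\theta} s^{\xi(\theta)} \in \mathcal{U}$, and for $\theta \in \Sigma_\infty^\dag$ the tiling is the increasing union $\Pi(\theta) = \bigcup_m E_{\theta|m} T_{\xi(\theta|m)}^{(\theta|m)^+}$ supplied by Theorem \ref{thm:three}(2). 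Writing $\Pi(\psi)$ likewise, the tiling $E\Pi(\psi)$ is the isometric combination $\bigcup_m (E\, E_{\psi|m})\, T_{\xi(\psi|m)}^{(\psi|m)^+}$.

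Next I would apply Theorem \ref{welldefined}(v) to the containment $\Pi(\theta) \subset E\Pi(\psi)$, which immediately yields $\alpha^k \Pi(\theta) \subset \alpha^k(E\Pi(\psi))$ (whenever the $k$-fold deflation is defined: when $\theta,\psi\in\Sigma_\ast^\dag$ the hypothesis $k \le \min\{\xi(\theta),\xi(\psi)\}$ keeps the indices $\xi(\theta)-k$ and $\xi(\psi)-k$ in $\mathbb{N}_0$), and analogously $\alpha^{-k}\Pi(\theta) \subset \alpha^{-k}(E\Pi(\psi))$. By Theorem \ref{welldefined}(iv), $\alpha^k$ distributes over the union. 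Iterating the first line of (\ref{caninflation}) gives $\alpha^k(F T_l^v) = (s^k F s^{-k})\, T_{l-k}^v$ for any $F\in\mathcal{U}$ and $l \ge k$, so
\[
\alpha^k\bigl(E E_{\psi|m} T_{\xi(\psi|m)}^{(\psi|m)^+}\bigr) = s^k E E_{\psi|m} s^{-k}\, T_{\xi(\psi|m)-k}^{(\psi|m)^+} = (s^k E s^{-k})\bigl(s^k E_{\psi|m} s^{-k}\, T_{\xi(\psi|m)-k}^{(\psi|m)^+}\bigr).
\]
Taking the union over $m$ and factoring the isometry $s^k E s^{-k}$ out of each term yields $\alpha^k(E\Pi(\psi)) = (s^k E s^{-k})\,\alpha^k \Pi(\psi)$. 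Combined with the containment above this gives the first assertion $\alpha^k\Pi(\theta) \subset s^k E s^{-k}\,\alpha^k\Pi(\psi)$. The identical argument, using the second line of (\ref{caninflation}) in place of the first, gives $\alpha^{-k}(E\Pi(\psi)) = (s^{-k} E s^k)\,\alpha^{-k}\Pi(\psi)$ and hence the second assertion.

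The main obstacle — and the only place where rigidity is essential — is the hidden step that $\alpha^k$ and $\alpha^{-k}$ act consistently and unambiguously on the combinations above, regardless of how the decomposition into canonical pieces is chosen. Example \ref{ex3} shows that without rigidity, $\alpha$ may fail to distribute over an isometric combination, because a tile may match $E s A_v$ in more than one way. Rigidity, via Lemma \ref{keylemma} and Theorem \ref{equivalence}, guarantees that the \emph{large tiles} and their \emph{partners} of Definition \ref{inflationdef} are uniquely determined, so that the hypotheses of Theorem \ref{welldefined}(iv) and (v) are satisfied and the term-by-term deflation/inflation both distributes over the union and preserves the containment, closing the argument.
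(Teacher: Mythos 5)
Your proposal is correct and follows essentially the same route as the paper, whose proof of Corollary \ref{corollaryX} is simply the remark that it ``follows directly using the above identities'': you have expanded exactly those identities, namely the decomposition of $\Pi(\theta)$ and $E\Pi(\psi)$ into isometric combinations of canonical tilings via Theorem \ref{piusingts}, the conjugation formula $\alpha^{k}(E\,\cdot)=(s^{k}Es^{-k})\alpha^{k}(\cdot)$ coming from Definitions \ref{infldef1} and \ref{uniondef}, and the distributivity and monotonicity guaranteed under rigidity by Theorem \ref{welldefined}(iv)--(v). Your closing observation that rigidity is precisely what makes the term-by-term action of $\alpha^{\pm k}$ representation-independent (cf.\ Example \ref{ex3}) is the correct identification of where the hypothesis enters.
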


\begin{proof}
This follows directly using the above identities.
\end{proof}

\section{\label{charismasec}Characterization of isometric rigid tilings}

Define for all $k\in\mathbb{N}$ and $v,w\in\mathcal{V}$
\[
\Lambda_{k}^{v,w}=\{\sigma\in\Sigma_{\ast}:\xi(\sigma)=k,\sigma^{-}%
=v,\sigma^{+}=w\}\subset\Omega_{k-1}^{v}%
\]

\begin{theorem}
\label{keythm} Let $\left(  \mathcal{F},\mathcal{G}\right)  $ be a rigid
tiling IFS. For all $k\in\mathbb{N}_{0}$ there is a bijection between
$\Lambda_{k}^{v,w}$ and the set of isometric copies of $T_{0}^{w}$ contained
in $T_{k}^{v}$. The bijection is provided by the map $H:\Lambda_{k}%
^{v,w}\rightarrow\mathcal{R}(H)\subset$ $T_{k}^{v}$ defined by
\[
H(\sigma)=s^{-k}f_{\sigma}(T_{0}^{w})
\]
where $\mathcal{R(H)}$ is the range of $H$.
\end{theorem}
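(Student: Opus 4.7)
The plan is to use the deflation operator $\alpha$ provided by rigidity to reduce the problem of counting isometric copies of $T_0^w$ inside $T_k^v$ to counting certain ``large'' tiles inside $T_{k-1}^v$, and then to identify those large tiles with elements of $\Lambda_k^{v,w}$ via Lemma~\ref{bijlemma} and rigidity.

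First I would verify that $H$ is well-defined. The map $s^{-k}f_\sigma$ has scaling ratio $s^{-k}\cdot s^{\xi(\sigma)}=1$ since $\xi(\sigma)=k$, so it is an isometry, and hence $H(\sigma)$ is an isometric copy of $T_0^w$. For the containment $H(\sigma)\subset T_k^v$, I would note that for each edge $e$ with $e^-=w=\sigma^+$ the word $\sigma e$ lies in $\Omega_k^v$, because $\xi^-(\sigma e)=\xi(\sigma)=k<k+a_e=\xi(\sigma e)$ and $(\sigma e)^-=\sigma^-=v$. Hence
\[
H(\sigma)=\{s^{-k}f_{\sigma e}(A_{e^+}):e^-=w\}\subset s^{-k}\pi(\Omega_k^v)=T_k^v.
\]

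Next, by Theorem~\ref{welldefined}(i) the assignment $ET_0^w\mapsto sEA_w$ is a bijection between isometric copies of $T_0^w$ contained in $T_k^v$ and tiles of $T_{k-1}^v$ that can be written in the form $sE'A_w$ with $E'\in\mathcal{U}$ (the ``large'' tiles isometric to $sA_w$). By Lemma~\ref{bijlemma}, the tiles of $T_{k-1}^v$ are indexed bijectively by $\tau\in\Omega_{k-1}^v$, with $\tau\leftrightarrow s^{-(k-1)}f_\tau(A_{\tau^+})$. Writing $s^{-(k-1)}f_\tau=E_\tau s^{m_\tau}$ with $E_\tau$ an isometry forces $m_\tau=\xi(\tau)-k+1$, and $\tau\in\Omega_{k-1}^v$ gives $\xi(\tau)\geq k$, hence $m_\tau\geq 1$. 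The tile at $\tau$ has the form $sE'A_w$ precisely when $m_\tau=1$ and $\tau^+=w$: the former equality forces $\xi(\tau)=k$, and rigidity condition A(iii), applied to the relation $A_{\tau^+}=(s^{k-1}f_\tau^{-1}sE')A_w$ in which the composite is an isometry in $\mathcal{U}$, forces $\tau^+=w$. Together with $\tau^-=v$ inherited from $\Omega_{k-1}^v$, this characterises exactly $\tau\in\Lambda_k^{v,w}$. Reversing the chain, $\sigma\in\Lambda_k^{v,w}$ yields the large tile $s^{-(k-1)}f_\sigma(A_w)\in T_{k-1}^v$ whose $\alpha^{-1}$-image in $T_k^v$ is $(s^{-k}f_\sigma)T_0^w=H(\sigma)$, completing the bijection.

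The main obstacle is the clean application of rigidity condition A(iii); in particular, one must verify that the composite isometry $s^{k-1}f_\tau^{-1}sE'$ lies in $\mathcal{U}$, which is guaranteed by the minimality-type requirement imposed on $\mathcal{U}$ at the beginning of Section~\ref{rigidsec1}. Once this subtlety is dispatched, the remainder of the argument is purely bookkeeping of scaling factors combined with the bijective correspondence of Lemma~\ref{bijlemma}.
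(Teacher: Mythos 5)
Your proof is correct and follows essentially the same route as the paper's: both arguments deflate a copy $ET_0^w\subset T_k^v$ to a large tile of $T_{k-1}^v$ and then use the address parametrization of the tiles of $T_{k-1}^v$ together with rigidity (condition A(iii)) to force $\xi(\tau)=k$ and $\tau^+=w$, i.e.\ $\tau\in\Lambda_k^{v,w}$. The only organizational difference is that you package this as a chain of bijections (via Theorem \ref{welldefined}(i) and Lemma \ref{bijlemma}) where the paper checks injectivity and surjectivity of $H$ separately, but the substance is identical.
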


\begin{proof}
(i) It is readily checked that $H(\Lambda_{k}^{v,w})\subset T_{k}^{v}$. (ii)
Suppose $H\left(  \sigma\right)  =H(\omega)$ for $\sigma,\omega\in\Lambda
_{k}^{v,w}$. Then $\xi(\sigma)=\xi(\omega)=k,$ $\sigma^{+}=\omega^{+}=w$,
$\sigma^{-}=\omega^{-}=v$ and
\[
s^{-k}f_{\sigma}(T_{0}^{w})=s^{-k}f_{\omega}(T_{0}^{w})\Rightarrow f_{\sigma
}(A_{v})=f_{\omega}(A_{v})\Rightarrow\sigma=\omega
\]
(iii) Suppose that $ET_{0}^{w}\subset T_{k}^{v}$ is an isometric copy of
$T_{0}^{w}$ that is contained in $T_{k}^{v}$. Then we need to show that
$ET_{0}^{w}$ is in $\mathcal{R}(H).$ We have
\[
\alpha ET_{0}^{w}\subset\alpha T_{k}^{v}\Rightarrow sEs^{-1}sA_{w}\in
T_{k-1}^{v}\Rightarrow sEs^{-1}sA_{w}=s^{-k+1}f_{\sigma}(A_{w})
\]
for some $\sigma$ such that $\sigma^{+}=w$, $\sigma^{-}=v$, $\xi(\sigma)=k,$
because the r.h.s. must be a tile in $T_{k-1}^{v}$ congruent to $sA_{w}$. It
follows that $E=s^{-k}f_{\sigma}$ where $\sigma\in\Lambda_{k}^{v,w}$and so
$H(\sigma)=ET_{0}^{w}\in\mathcal{R}(H)$, because any copy of $T_{0}^{w}$ in
$ET_{k}^{v}$ must equal the result of application of $\alpha^{-1}$ to a copy
of $sA_{v}$ in $T_{k-1}^{v}$.
\end{proof}

\begin{theorem}
\label{basictheorem} Let $\left(  \mathcal{F},\mathcal{G}\right)  $ be a
tiling IFS.

(i) If $\theta,\psi\in\Sigma_{\infty}^{\dag}$, $S^{p}\theta=S^{q}\psi,$
$E=f_{-\theta|p}(f_{-\psi|q})^{-1},$ $\left(  \theta|p\right)  ^{+}=\left(
\psi|q\right)  ^{+}$, and $\xi\left(  \theta|p\right)  =\xi\left(
\psi|q\right)  ,$ then $\Pi(\theta)=E\Pi(\psi)$ where $E$ is an isometry.

(ii) Let $\left(  \mathcal{F},\mathcal{G}\right)  $ be rigid, and let
$\Pi(\theta)=E\Pi(\psi)$ where $E\in\mathcal{U}$ is an isometry, for some pair
of addresses $\theta,\psi\in\Sigma_{\infty}^{\dag}.$ Then there are
$p,q\in\mathbb{N}$ such that $S^{p}\theta=S^{q}\psi,$ $E=f_{-\left(
\theta|p\right)  }(f_{-\left(  \psi|q\right)  })^{-1},\left(  \theta|p\right)
^{+}=\left(  \psi|q\right)  ^{+}$, and $\xi\left(  \theta|p\right)
=\xi\left(  \psi|q\right)  .$
\end{theorem}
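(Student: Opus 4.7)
For part (i), the plan is a direct symbolic computation. For $k\geq p$ factor $\theta|k=(\theta|p)\cdot(S^{p}\theta|(k-p))$ and use $S^{p}\theta=S^{q}\psi$ to replace the tail: with $m:=q+k-p$ one obtains $\xi(\theta|k)=\xi(\psi|m)$, $(\theta|k)^{+}=(\psi|m)^{+}$, and $f_{-(\theta|k)}=E\,f_{-(\psi|m)}$, where $E:=f_{-(\theta|p)}(f_{-(\psi|q)})^{-1}$. The hypothesis $\xi(\theta|p)=\xi(\psi|q)$ makes the scaling factor of $E$ equal to $1$, so $E$ is an isometry; substituting into Definition \ref{def10} gives $\Pi(\theta|k)=E\Pi(\psi|m)$, and taking the union over $k\to\infty$ yields $\Pi(\theta)=E\Pi(\psi)$.

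For part (ii), the plan is to track the base piece $T_{0}^{\theta^{-}}=\Pi(\theta|0)$, which lies inside both growing families $\{\Pi(\theta|k)\}$ and $\{E\Pi(\psi|l)\}$ whose unions coincide. By Theorem \ref{keythm} the inclusion $T_{0}^{\theta^{-}}\subset\Pi(\theta|k)=E_{\theta|k}T_{\xi(\theta|k)}^{(\theta|k)^{+}}$ is encoded by a unique path $\tau_{k}\in\Lambda_{\xi(\theta|k)}^{(\theta|k)^{+},\theta^{-}}$, and the identity $f_{-\theta|k}f_{\tau_{k}}=Id$ (a consequence of rigidity B(i)) pins it down to $\tau_{k}=(\theta_{k},\theta_{k-1},\ldots,\theta_{1})$, the reversal of $\theta|k$. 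Similarly, for $l$ sufficiently large, $T_{0}^{\theta^{-}}\subset E\Pi(\psi|l)$ is encoded by $\sigma_{l}\in\Lambda_{\xi(\psi|l)}^{(\psi|l)^{+},\theta^{-}}$ with $E=f_{\sigma_{l}}^{-1}(f_{-\psi|l})^{-1}$, and the recursion $\sigma_{l+1}=\psi_{l+1}\sigma_{l}$ prepends a $\psi$-symbol. Using Corollary \ref{contain} (every meeting pair of canonical copies yields one-sided containment) together with the fact that $\tau_{k}$ and $\sigma_{l}$ describe the same underlying position in $\Pi(\theta)=E\Pi(\psi)$, a matching is forced: for some finite $p,q$ one has $\sigma_{q}=\tau_{p}=(\theta_{p},\theta_{p-1},\ldots,\theta_{1})$. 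Reading off the endpoints and weight of this path yields $(\theta|p)^{+}=(\psi|q)^{+}$ and $\xi(\theta|p)=\xi(\psi|q)$; substituting $\sigma_{q}=\tau_{p}$ into $E=f_{\sigma_{q}}^{-1}(f_{-\psi|q})^{-1}$ gives $E=f_{-(\theta|p)}(f_{-(\psi|q)})^{-1}$.

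To obtain $S^{p}\theta=S^{q}\psi$, apply the identity $\Pi(S^{k}\theta)=\alpha^{\xi(\theta|k)}E_{\theta|k}^{-1}\Pi(\theta)$ of Theorem \ref{keythm2} to both sides of $\Pi(\theta)=E\Pi(\psi)$, the second application being legitimised by Corollary \ref{corollaryX}. The formula for $E$ together with $\xi(\theta|p)=\xi(\psi|q)$ makes all intermediate isometries cancel, yielding $\Pi(S^{p}\theta)=\Pi(S^{q}\psi)$. Running the same matching argument on this simpler equation (now with $E=Id$), together with the injectivity of $\sigma\mapsto f_{\sigma}$ supplied by the OSC and $A_{v}\cap A_{w}=\varnothing$, then forces $S^{p}\theta=S^{q}\psi$, completing the proof.

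The main obstacle I anticipate is the matching assertion in the second paragraph: rigorously ensuring that the two growing descriptions $\{\tau_{k}\}$ and $\{\sigma_{l}\}$ coincide at some finite stage rather than forever differing by a nonempty intermediate path. This is where the primitive strongly connected structure of $\mathcal{G}$, the coprimality $\gcd(a_{1},\ldots,a_{N})=1$, and the combinatorics of the prepending recursion for $\sigma_{l}$ must be combined with Corollary \ref{contain} to preclude infinite strict interleaving.
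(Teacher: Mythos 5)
Your part (i) is the same direct computation the paper leaves as ``readily checked,'' and it is fine. For part (ii) your setup is close to the paper's --- both use Corollary \ref{contain} to nest $E\Pi(\psi|l)$ among the $\Pi(\theta|k)$, Theorem \ref{keythm} to convert containments of copies of $T_{0}^{w}$ into paths in $\Lambda$, and Theorem \ref{keythm2} with Corollary \ref{corollaryX} to deflate --- but the step you flag as the ``main obstacle'' is the entire content of the theorem, and your proposal does not supply it. In your notation $\sigma_{l}$ factors as $\rho_{l}\cdot(\theta_{k_{l}}\cdots\theta_{1})$, where $\rho_{l}$ is the relative address of the largest $\Pi(\theta|k_{l})$ contained in $E\Pi(\psi|l)$; your matching $\sigma_{q}=\tau_{p}$ is exactly the assertion that $\rho_{l}=\varnothing$ for some $l$, i.e.\ that $E\Pi(\psi|l)$ coincides exactly with some $\Pi(\theta|k)$ at a finite stage. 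Nothing in the prepending recursion, the primitivity of $\mathcal{G}$, or the coprimality of the $a_{i}$ rules out $\rho_{l}$ being a nonempty path for every $l$; those are not the tools that close this gap.

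What actually closes it in the paper is the two-sided squeeze $\Pi(\theta|k)\subset E\Pi(\psi|l)\subset\Pi(\theta|k+1)$: applying Theorem \ref{keythm} to each inclusion separately gives two expressions $E=f_{-(\theta|k)}f_{\sigma}^{-1}(f_{-(\psi|l)})^{-1}$ and $E=f_{-(\theta|k+1)}f_{\tilde{\sigma}}(f_{-(\psi|l)})^{-1}$ with $\xi(\sigma)+\xi(\tilde{\sigma})=a_{\theta_{k+1}}$; equating them yields $f_{\theta_{k+1}}=f_{\tilde{\sigma}}f_{\sigma}$, and injectivity of $\omega\mapsto f_{\omega}$ (from the OSC together with $A_{v}\cap A_{w}=\varnothing$) forces $\tilde{\sigma}\sigma=\theta_{k+1}$ as words, so one of $\sigma,\tilde{\sigma}$ is empty and one of the two inclusions is an equality. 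That single comparison is what delivers $\xi(\theta|p)=\xi(\psi|q)$, the stated formula for $E$, and (letting $l$ grow) $S^{p}\theta=S^{q}\psi$. Your final paragraph, deflating both sides to reduce to the case $E=Id$, is a reasonable way to finish once the matching is in hand, but without the comparison argument the proof of (ii) is incomplete.
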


\begin{proof}
Part (i) is readily checked. Proof of (ii). (A) Begin by choosing
$L\in\mathbb{N}_{0}$ such that $\Pi(\theta|0)\cap E\Pi(\psi|L)\neq\emptyset$.
Note that $\Pi(\theta|0)\subset E\Pi(\psi|L)$. (B) Let $l\in\mathbb{N}_{0}$
with $l\geq L$. Using Corollary \ref{contain} we can choose $k=k_{l}$ so that
\begin{equation}
\Pi(\theta|k)\subset E\Pi(\psi|l)\subset\Pi(\theta|k+1) \label{containment}%
\end{equation}
(C) Using Theorem \ref{keythm2} and Corollary \ref{corollaryX}, we can apply
$\alpha^{\xi(\theta|k)}$ to both sides of $\Pi(\theta|k)\subset E\Pi(\psi|l)$
to obtain
\[
\alpha^{\xi(\theta|k)}\Pi(\theta|k)\subset\alpha^{\xi(\theta|k)}E\Pi(\psi|l)
\]
Writing $w=(\theta|k)^{+}$, $v=(\psi|l)^{+}$ and using the first part of
Theorem \ref{piusingts}, we now have%
\begin{align*}
s^{\xi(\theta|k)}f_{-\left(  \theta|k\right)  }T_{0}^{w}  &  \subset
s^{\xi(\theta|k)}Ef_{-\left(  \psi|l\right)  }s^{\xi(\psi|l)-\xi(\theta
|k)}T_{\xi(\psi|l)-\xi\left(  \theta|k\right)  }^{v}\\
&  \Rightarrow s^{-\xi(\psi|l)+\xi(\theta|k)}\left(  f_{-\left(
\psi|l\right)  }\right)  ^{-1}E^{-1}f_{-\left(  \theta|k\right)  }T_{0}%
^{w}\subset T_{\xi(\psi|l)-\xi\left(  \theta|k\right)  }^{v}%
\end{align*}
Now apply the Theorem \ref{keythm} to conclude that there is $\sigma\in
\Lambda_{\xi(\psi|l)-\xi(\theta|k)}^{v,w}$ with $\sigma^{+}=v$ and $\sigma
^{-}=w$ so that
\[
s^{-\xi(\psi|l)+\xi(\theta|k-1)}\left(  f_{-\left(  \psi|l\right)  }\right)
^{-1}E^{-1}f_{-\left(  \theta|k-1\right)  }T_{0}^{w}=s^{-\xi(\psi
|l)+\xi(\theta|k-1)}f_{\sigma}T_{0}^{w}%
\]
This implies
\[
E=f_{-\left(  \theta|k\right)  }f_{\sigma}^{-1}\left(  f_{-\left(
\psi|l\right)  }\right)  ^{-1}%
\]
We also have $E\Pi(\psi|l)\subset\Pi(\theta|k+1)$ which, following the same
steps, yields%
\[
E=f_{-\left(  \theta|k+1\right)  }f_{\tilde{\sigma}}\left(  f_{-\left(
\psi|l\right)  }\right)  ^{-1}%
\]
for some $\tilde{\sigma}\in\Lambda_{\xi(\theta|k)-\xi(\psi|l)}^{x,y}$ where
$x=\theta_{k+1}^{+},y=\psi_{l}^{+}=v$. Comparing the two expression for $E$ we
conclude%
\begin{align*}
f_{-\left(  \theta|k+1\right)  }f_{\tilde{\sigma}}\left(  f_{-\left(
\psi|l\right)  }\right)  ^{-1}  &  =f_{-\left(  \theta|k\right)  }f_{\sigma
}^{-1}\left(  f_{-\left(  \psi|l\right)  }\right)  ^{-1}\\
&  \Rightarrow f_{-\theta_{k}}=f_{\sigma}^{-1}f_{\tilde{\sigma}}^{-1}%
\end{align*}
which implies either $\tilde{\sigma}=\emptyset$, $\sigma=\theta_{k},$ and
$v=w$, or $\sigma=\emptyset$ and $\tilde{\sigma}=\theta_{k}$ and $w=x$. It
follows that either $E=f_{-(\theta|k)}\left(  f_{-\psi|l}\right)  ^{-1}$ or
$f_{-(\theta|k+1)}\left(  f_{-\psi|l}\right)  ^{-1}$. That is, one or other of
the two inclusion symbols in (\ref{containment}) can be replaced by an
equality sign. It follows that either $E=f_{-(\theta|k)}\left(  f_{-\psi
|l}\right)  ^{-1}$ where $\xi(\theta|k)=\xi(\psi|l)$ or $f_{-(\theta
|k+1)}\left(  f_{-\psi|l}\right)  ^{-1}$ where $\xi(\theta+1|k)=\xi(\psi|l)$.
(D) The rest of the proof follows from the arbitrarily large size of $l$.
\end{proof}

\begin{corollary}
\label{aperithm}If $(\mathcal{F}$,$\mathcal{G)}$ is rigid (with respect to
$\mathcal{U)}$ then $\Pi(\theta)=E\Pi(\theta)$ for some $E\in\mathcal{U}$ and
$\theta\in\Sigma_{\infty}^{\dag}$, if and only if $E=Id$. In particular, if
$\mathcal{U}$ contains the group of Euclidean translations on $\mathbb{R}%
^{M},$ then $\Pi(\theta)$ is non-periodic for all $\theta\in\Sigma_{\infty
}^{\dag}$.
\end{corollary}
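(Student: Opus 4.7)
The plan is to reduce the corollary directly to Theorem~\ref{basictheorem}(ii) applied in the special case $\psi = \theta$. The ``if'' direction is trivial: the identity satisfies $\Pi(\theta) = Id\cdot\Pi(\theta)$. For the converse, suppose $\Pi(\theta) = E\Pi(\theta)$ with $E \in \mathcal{U}$. Then Theorem~\ref{basictheorem}(ii), applied with $\psi = \theta$, produces $p, q \in \mathbb{N}$ such that $S^p\theta = S^q\theta$, $(\theta|p)^+ = (\theta|q)^+$, $\xi(\theta|p) = \xi(\theta|q)$, and $E = f_{-(\theta|p)}(f_{-(\theta|q)})^{-1}$.

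The crucial observation is that the map $k \mapsto \xi(\theta|k)$ is strictly increasing: by Definition~\ref{defONE} each exponent $a_n$ lies in $\mathbb{N}$, so $\xi(\theta|k+1) - \xi(\theta|k) = a_{\theta_{k+1}} \geq 1$. Hence the equality $\xi(\theta|p) = \xi(\theta|q)$ forces $p = q$, whence $\theta|p = \theta|q$ as finite words, so $f_{-(\theta|p)} = f_{-(\theta|q)}$ and therefore $E = Id$. This establishes the first assertion of the corollary.

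For the final clause, suppose $\mathcal{U}$ contains the group of Euclidean translations and that $\Pi(\theta)$ were periodic for some $\theta \in \Sigma_\infty^\dag$; then there would be a nontrivial translation $E \in \mathcal{U}$ with $\Pi(\theta) = E\Pi(\theta)$, and the first part forces $E = Id$, a contradiction. I do not expect a serious obstacle here: the argument is essentially a short bookkeeping consequence of Theorem~\ref{basictheorem}(ii) combined with the strict positivity of the exponents $a_n$. The only subtle point worth checking is that Theorem~\ref{basictheorem}(ii) is genuinely available with $\theta = \psi$ (so that the produced $p,q$ can coincide and force $E = Id$ rather than yielding a contradiction), which is fine because nothing in its hypotheses excludes this case.
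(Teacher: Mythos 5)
Your proposal is correct and is essentially the derivation the paper intends: the corollary is stated immediately after Theorem \ref{basictheorem} with no separate proof, and the intended argument is exactly your reduction to part (ii) with $\psi=\theta$, where $\xi(\theta|p)=\xi(\theta|q)$ together with the strict monotonicity of $k\mapsto\xi(\theta|k)$ (each $a_{n}\geq 1$) forces $p=q$ and hence $E=Id$. The non-periodicity clause then follows exactly as you state.
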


\section{\label{notrigidsec}Inflation and deflation of tilings which may not
be rigid}

In this section we explore consequences of $\Pi(\theta)=E\Pi(\psi)$ without
requiring rigidity. An example of what we do require is: if $\Pi(\theta
)=E\Pi(\psi),$ where $\theta,\psi\in\Sigma_{\infty}^{\dag}$ and $E\in
\mathcal{U}$ are known, then $\alpha^{n}\Pi(\theta)=\alpha^{n}(E\Pi(\psi))$
for all $n$. In this example $\alpha$ acts on graphs of functions as described
earlier, but the resulting tilings on both sides of the equation coincide.
This is always true when the system is rigid. But it occurs more commonly as
illustrated by the following examples.

\begin{example}
Let $V=1$ with $\mathcal{F=\{}\mathbb{R}^{1};f_{1}(x)=x/2,f_{2}(x)=(x+1)/2\}$.
Tilings $\Pi(\theta)$ for $\theta\in\Sigma_{\infty}^{\dag}$ take one of three
forms: either (i) $\Pi(\theta)=\{\mathbb{[}n/2,(n+1)/2]:n=...-2,-1,0,1,2...\}$
or (ii) $\Pi(\theta)$ is a translation of the tiling $\mathbb{\{[}%
n/2,(n+1)/2]:n\in\mathbb{N}_{0}\}$ or (iii) it is an integer translation of
$\mathbb{\{[}-(n+1)/2,-n/2]:n\in\mathbb{N}_{0}\}$. Also $\alpha\Pi(\theta)$
takes the form of $\Pi(\theta)$ and if $\Pi(\theta)=\Pi(\psi),$ then
$\alpha\Pi(\theta)=\alpha\Pi(\psi)$.
\end{example}

\begin{example}
\label{lastex} Let $V=1$ and $\left\vert \mathcal{E}\right\vert \mathcal{=}2$
in $\mathbb{R}^{3}$ with $f_{1}$,$f_{2}$ defined respectively by
\[%
\begin{bmatrix}
0 & -s & 0\\
s & 0 & 0\\
0 & 0 & \frac{1}{2}%
\end{bmatrix}%
\begin{bmatrix}
x\\
y\\
z
\end{bmatrix}
+%
\begin{bmatrix}
0\\
s\\
0
\end{bmatrix}
,%
\begin{bmatrix}
s^{2} & 0 & 0\\
0 & -s^{2} & 0\\
0 & 0 & \frac{1}{2}%
\end{bmatrix}%
\begin{bmatrix}
x\\
y\\
z
\end{bmatrix}
+%
\begin{bmatrix}
0\\
1\\
\frac{1}{2}%
\end{bmatrix}
\]
where $s^{2}+s^{4}=1$, $0<s.$ See Figure \ref{beeplayersa}. It is easy to see
that, if $\theta$ and $\psi$ $\in\Sigma_{\infty}^{\dag}$ and $\Pi(\theta
)=E\Pi(\psi)$ for some translation $E,$ then $\alpha^{K}\Pi(\theta)=\alpha
^{K}\left(  E\Pi(\psi)\right)  $ for all $K\in\mathbb{N}$.
\end{example}

\begin{figure}[ptb]
\centering\includegraphics[
height=2.3246in,
width=2.0216in
]{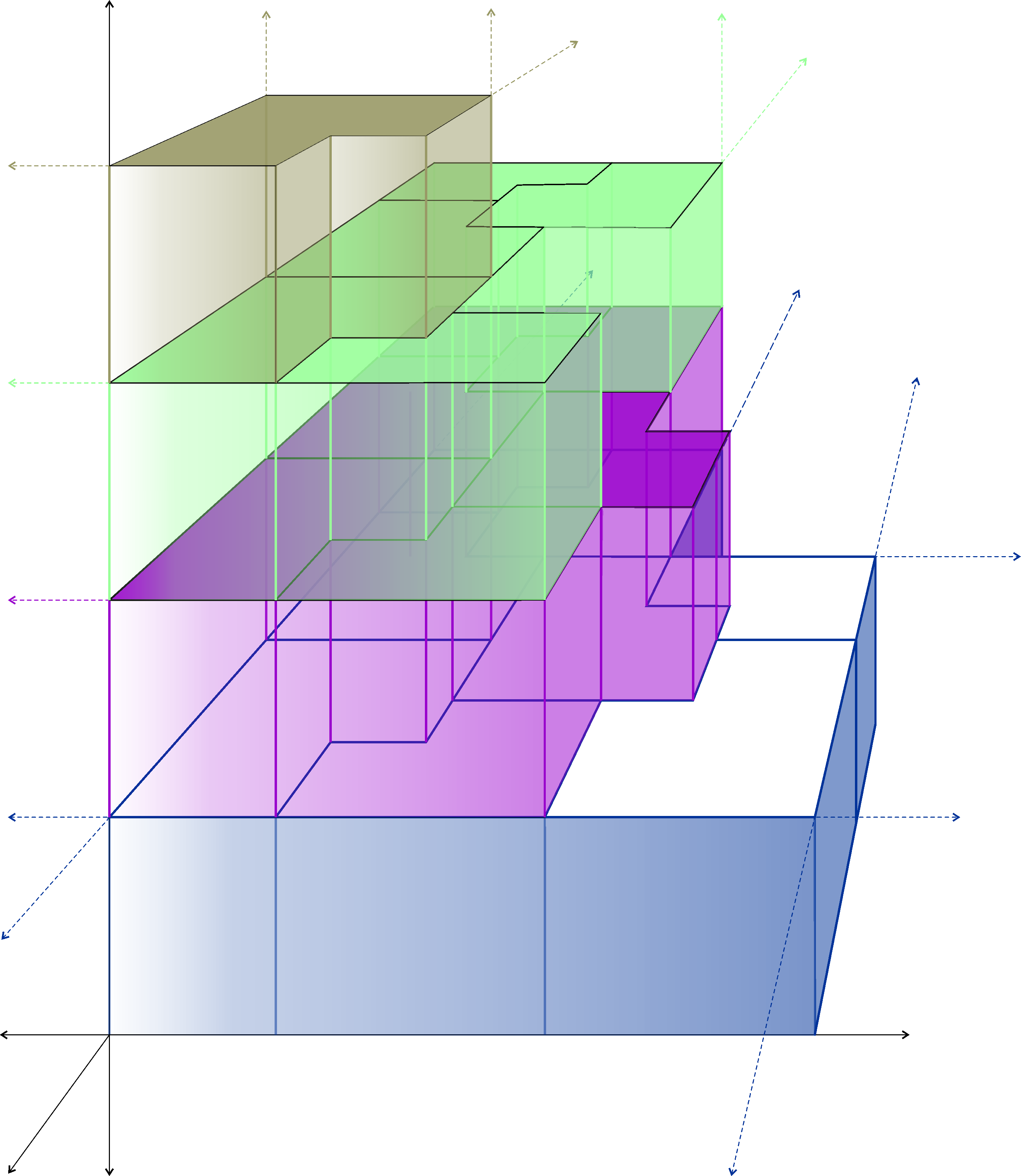}\caption{Illustration related to a 3D tiling that is golden
b in two directions and 0.5 scalings in z direction. See \cite{polygon} for
discussion of golden b tilings. See Example \ref{lastex} for the IFS.}%
\label{beeplayersa}%
\end{figure}

In the rest of this section $\mathcal{U}$ is simply a set of isometries on
$\mathbb{R}^{M}$.

\begin{definition}
\label{welldef}If $\Pi(\theta)=E\Pi(\psi)$ implies $\alpha^{k}\Pi
(\theta)=\alpha^{k}E\Pi(\psi)$ for all $k\in\mathbb{N}$, for all $\theta
,\psi\in\Sigma_{\infty}^{\dag},$ $E\in\mathcal{U}$, $i,j\in\mathbb{N}_{0},$
then we say that $(\mathcal{F}$,$\mathcal{G)}$ is \textbf{well-behaved} (with
respect to the set of isometries $\mathcal{U}$).
\end{definition}

\begin{theorem}
\label{lastthm}Let $(\mathcal{F}$,$\mathcal{G)}$ be a well-behaved tiling IFS.
If $\Pi(\theta)=E\Pi(\psi)$ for some isometry $E\in\mathcal{U}$ and
$\theta,\psi\in\Sigma_{\infty}^{\dag},$ then there are $p,q\in\mathbb{N}$,
$h,e\in\mathcal{E}$, $l\in\{0,1,...,a_{\max}-1\}$ so that
\[
E=f_{-\theta|p}s^{l}f_{h}V_{h^{+}e^{+}}f_{e}^{-1}f_{-\psi|q}^{-1}%
\]
where $h^{-}=\theta_{p+1}^{+}$, $e^{-}=\psi_{q}^{+}$ and $V_{h^{+}e^{+}}$ is a
similitude such that $V_{h^{+}e^{+}}A_{e^{+}}=A_{h^{+}}$.
\end{theorem}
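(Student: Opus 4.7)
The strategy is to mirror the proof of Theorem \ref{basictheorem}(ii), with the sandwich inclusion $\Pi(\theta|k) \subset E\Pi(\psi|l) \subset \Pi(\theta|k+1)$ (which used rigidity via Corollary \ref{contain}) replaced by the equality $\alpha^K \Pi(\theta) = \alpha^K(E\Pi(\psi))$ supplied by Definition \ref{welldef}. The final factorisation of $E$ will be read off by matching a single pair of corresponding tiles after a suitable number of deflations.

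I would proceed as follows. First, since $\Pi(\theta|0) = T_0^{\theta^-}$ is a finite patch and $E\Pi(\psi) = \bigcup_l E\Pi(\psi|l)$ is an increasing union, choose $q\in\mathbb{N}$ minimal with $\Pi(\theta|0) \subset E\Pi(\psi|q)$. Second, apply $\alpha^K$ with $K = \xi(\theta|p)$ for a suitably large $p$ to both sides: by well-behavedness the images coincide, and by Theorem \ref{keythm2} the left side equals $f_{-\theta|p}\, s^{\xi(\theta|p)}\, \Pi(S^p\theta)$, while Definition \ref{uniondef} unfolds the right side in terms of $E$, $f_{-\psi|q'}$ and canonical tilings for $q' \geq q$. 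Third, choosing $p$ so that the residual offset $l := \xi(\theta|p) + a_h - \xi(\psi|q)$ lies in $\{0,1,\ldots,a_{\max}-1\}$, the identification at the top level of the deflated tilings pairs a tile $f_h(A_{h^+})$ in $T_0^{\theta_{p+1}^+}$ with a tile $f_e(A_{e^+})$ in $T_0^{\psi_q^+}$. Rearranging the resulting equality then gives the asserted form
\[
E = f_{-\theta|p}\, s^l\, f_h\, V_{h^+ e^+}\, f_e^{-1}\, f_{-\psi|q}^{-1},
\]
with the constraints $h^- = \theta_{p+1}^+$ and $e^- = \psi_q^+$ automatic from which outgoing edges label the relevant canonical tiles on each side.

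The main obstacle will be that Theorem \ref{keythm}, which provides the bijection between $\Lambda_k^{v,w}$ and isometric copies of $T_0^w$ in $T_k^v$, was proved using rigidity and does not apply here. In the non-rigid well-behaved setting, extra copies of $T_0^w$ inside $T_k^v$ can arise via a similitude sending one component $A_{e^+}$ to another component $A_{h^+}$; the similitude $V_{h^+ e^+}$ is precisely the gadget recording this ambiguity. Verifying that the element $f_h^{-1} s^{-l} f_{-\theta|p}^{-1} E f_{-\psi|q} f_e$ actually maps $A_{e^+}$ onto $A_{h^+}$, so that it qualifies as a legitimate $V_{h^+ e^+}$, is the delicate point. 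This verification rests on showing that the well-behaved equality $\alpha^K \Pi(\theta) = \alpha^K(E\Pi(\psi))$ forces the tile-to-tile correspondence to preserve component supports up to a similitude once the bookkeeping of scaling factors and initial-segment addresses is done, and on the fact that adjusting $p$ by one increments $\xi(\theta|p)$ by $a_{\theta_{p+1}} \leq a_{\max}$, which confines $l$ to the claimed range.
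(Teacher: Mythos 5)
Your plan follows essentially the same route as the paper's proof: use well-behavedness to equate $\alpha^{K}\Pi(\theta)$ with $\alpha^{K}(E\Pi(\psi))$, unfold both sides via Theorem \ref{keythm2} and Definition \ref{uniondef} as increasing unions, match the leading canonical patches to pair a tile $f_h(A_{h^+})$ with a tile $f_e(A_{e^+})$, and record the resulting component-to-component similitude as $V_{h^+e^+}$ before rearranging for $E$. Your identification of the delicate point (that without rigidity the tile correspondence is only pinned down up to such a similitude) is exactly the step the paper handles; only your bookkeeping for $l$ differs superficially from the paper's $l=\xi(\theta_{p+1})-m$ with $\xi(\psi|q)=\xi(\theta|p)+m$.
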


\begin{proof}
Since $\Pi(\theta)=E\Pi(\psi)$ and $\alpha$ is well-behaved we have
$\alpha^{\xi(\psi|q)}\Pi(\theta)=\alpha^{\xi(\psi|q)}E\Pi(\psi)$ for all
$q\in\mathbb{N}$. Let $\xi(\psi|q)=\xi(\theta|p)+m\leq\xi(\theta|p+1)$. Note
that $m=m(p,\theta,\psi)$ and $q=q(p,\theta,\psi)$. We calculate%
\begin{align*}
\alpha^{\xi(\psi|q)}E\Pi(\psi)  &  =s^{\xi(\psi|q)}Es^{-\xi(\psi|q)}%
\alpha^{\xi(\psi|q)}\Pi(\psi)=s^{\xi(\psi|q)}Ef_{-(\psi|q)}\Pi(S^{q}\psi)\\
\alpha^{\xi(\psi|q)}\Pi(\theta)  &  =\alpha^{m}\alpha^{\xi(\theta|p)}%
\Pi(\theta)=\alpha^{m}s^{\xi(\theta|p)}f_{-(\theta|p)}\Pi(S^{p}\theta)\\
&  =s^{\xi(\theta|p)}s^{m}f_{-(\theta|p)}s^{-m}\alpha^{m}\Pi(S^{p}\theta)\\
&  =s^{\xi(\psi|q)}f_{-(\theta|p)}s^{-m}\alpha^{m}\Pi(S^{p}\theta)
\end{align*}
In the following, the sequences of unions are increasing unions.%
\begin{align*}
s^{m}\left(  f_{-(\theta|p)}\right)  ^{-1}Ef_{-(\psi|q)}\Pi(S^{q}\psi)  &
=\alpha^{m}\Pi(S^{p}\theta)\\
&  =\alpha^{m}\Pi(S^{p}\theta|j)\cup\alpha^{m}\Pi(S^{p}\theta|j+1)...\\
&  =\alpha^{m}\Pi(\theta_{p+1})\cup\alpha^{m}\Pi(\theta_{p+1}\theta_{p+2}%
)\cup...\\
&  =\alpha^{m}f_{-\theta_{p+1}}s^{\xi(\theta_{p+1})}T_{\xi(\theta_{p+1}%
)}^{\theta_{p+1}^{+}}\cup\alpha^{m}f_{-\theta_{p+1}\theta_{p+2}}s^{\xi
(\theta_{p+1}\theta_{p+2})}T_{\xi(\theta_{p+1}\theta_{p+2})}^{\theta_{p+2}%
^{+}}..\\
&  =s^{m}f_{-\theta_{p+1}}s^{\xi(\theta_{p+1})-m}T_{\xi(\theta_{p+1}%
)-m}^{\theta_{p+1}^{+}}\cup...
\end{align*}
We also have
\[
s^{m}\left(  f_{-(\theta|q)}\right)  ^{-1}Ef_{-(\psi|q)}\Pi(S^{q}\psi
)=s^{m}\left(  f_{-(\theta|q)}\right)  ^{-1}Ef_{-(\psi|q)}T_{0}^{\psi_{q}^{+}%
}\cup...
\]
By choosing $P$ sufficiently large, the following equivalent statements hold
for all $p\geq P$ :%
\begin{align*}
&  s^{m}\left(  f_{-(\theta|p)}\right)  ^{-1}Ef_{-(\psi|q)}T_{0}^{\psi_{q}%
^{+}}\text{ meets }s^{m}f_{-\theta_{p+1}}s^{\xi(\theta_{p+1})-m}T_{\xi
(\theta_{p+1})-m}^{\theta_{p+1}^{+}}\\
&  s^{m-\xi(\theta_{p+1})}\left(  f_{-(\theta|p+1)}\right)  ^{-1}%
Ef_{-(\psi|q)}T_{0}^{\psi_{q}^{+}}\text{ meets }T_{\xi(\theta_{p+1}%
)-m}^{\theta_{p+1}^{+}}%
\end{align*}
It follows that, for some $e,h\in\mathcal{E}$, $l=\xi(\theta_{q+1})-m$, we
must have:
\begin{align*}
s^{-l}\left(  f_{-(\theta|p+1)}\right)  ^{-1}Ef_{-(\psi|q)}f_{e}(A_{e^{+}})
&  =f_{h}(A_{h^{+}})\text{ where }h^{-}=\theta_{p+1}^{+}\text{, }e^{-}%
=\psi_{q}^{+}\\
f_{-h}s^{-l}\left(  f_{-(\theta|p+1)}\right)  ^{-1}Ef_{-(\psi|q)}f_{e}  &
=V_{h^{+},e^{+}}\text{ where }V_{h^{+},e^{+}}A_{e^{+}}=A_{h^{+}}\text{ is a
similitude;}\\
E  &  =f_{-(\theta|p+1)}s^{l}f_{h}V_{h^{+},e^{+}}f_{-e}\left(  f_{-(\psi
|q)}\right)  ^{-1}%
\end{align*}
Replacing $p+1$ by $p$ yields the formula for $E$ in the statement of the theorem.
\end{proof}

Notice how this result is consistent with Theorem \ref{basictheorem} because
if the system is rigid, then $s^{l}f_{h}V_{h^{+},e^{+}}f_{-e}$ must be the
identity. It has a nice interpretation in terms of the possibilities for
Example \ref{lastex}: translations by $\frac{\pm1}{2}$ map any tiling of
$\mathbb{R}$ into the same tiling, and correspond to the fact that in this
case $T_{0}$ meets $T_{0}\pm\frac{1}{2}$.

\section{\label{relationsec}Relationship with the self-similar tiling spaces
of Anderson and Putnam \cite{anderson}}

Here we construct the tiling spaces of Anderson and Putnam \cite{anderson}
(A\&P) and relate them to the tilings in this paper. We recall the terminology
of A\&P with adjustments so that their setting intersects ours. In this
Section a \textit{tile} is homeomorphic to a closed ball in $\mathbb{R}^{M}$,
a \textit{partial tiling} is a set of tiles with disjoint interiors, a
\textit{tiling }is a partial tiling with support $\mathbb{R}^{M}$, and
$\mathcal{U}$ is the set of euclidean translations on $\mathbb{R}^{M}$.

A\&P present the following general description of the kind of tilings they
consider, in the introduction to \cite{anderson}. `A tiling of $(\mathbb{R}%
^{M}=)\mathbb{R}^{d}$ is a cover of $\mathbb{R}^{d}$ by sets, each of which is
a translation of one of the prototiles ..., so that they overlap only on their
boundaries. We also assume a substitution rule: we have a constant
$(s^{-1}=)\lambda>1$ and, for each (A\&P) prototile, a rule for subdividing it
into pieces, each of which is another prototile, scaled down by a factor
$\lambda^{-1}.$

Next we report the construction of the A\&P tiling space $\mathbb{T}_{A\&P}$.
We refer to the sets that A\&P call prototiles as A\&P-prototiles. For any
partial tiling $T$, expansions $\lambda$ and translations $u$ are defined by
A\&P according to%
\begin{align*}
\lambda T  &  =\{\lambda t:t\in T\}\text{ for }\lambda\in(1,\infty)\\
u\left(  T\right)   &  =\{u\left(  t\right)  :t\in T\}\text{ for all }%
u\in\mathcal{U}%
\end{align*}
In A\&P $u\in\mathcal{U}$ is represented by $u\in\mathbb{R}^{M}$ and $u\left(
t\right)  =t+u$. The collection of tilings $\mathbb{T}_{A\&P}$ is defined as
follows. All tiles in a tiling in $\mathbb{T}_{A\&P}$ are translations of a
finite set of \textit{(A\&P-)prototiles} $\{\hat{p}_{i}:i=1,2,...,n_{pro}\}.$
Let $\widehat{\mathbb{T}}_{A\&P}$ be the collection of all partial tilings
that only contain translations of these prototiles. Assume there is a number
$\lambda>1$ and a \textit{substitution} rule\textit{ }that associates to each
$\hat{p}_{i}$ a partial tiling $P_{i}$ with support $\hat{p}_{i}$ such that
$\lambda P_{i}$ is in $\widehat{\mathbb{T}}_{A\&P}$. An inflation map
$\hat{\omega}:\widehat{\mathbb{T}}_{A\&P}\rightarrow\widehat{\mathbb{T}%
}_{A\&P}$ is defined by
\[
\hat{\omega}(T)=\lambda\bigcup\limits_{u(\hat{p}_{i})\in T}u\left(
P_{i}\right)
\]
The tiling space $\mathbb{T}_{A\&P}$ is the collection of tilings $T$ in
$\widehat{\mathbb{T}}_{A\&P}$ such that for any $P\subset T$ with bounded
support, we have $P\subset\hat{\omega}^{n}(u(\hat{p}_{i}))$ for some $n,i,$and
$u$. Let $\omega=\hat{\omega}|_{\mathbb{T}_{A\&P}}.$ A\&P point out that their
definitions of the tiling space $\mathbb{T}_{A\&P}$ and the operator $\omega,$
are adapted from the standard ones for symbolic substitution dynamical
systems, for example in \cite{mozes}, and are similar to the usage by
\cite{radin1}. It is also the same as the definition in \cite{solomyak2}. But
care must be taken with all such assertions of equivalence. For example, here
we do not consider either labelled tiles or tiles with adornments.

The partial tilings $\left\{  P_{i}\right\}  $ of the prototiles $\left\{
\hat{p}_{i}\right\}  $ define a graph IFS $\mathcal{\{F},\mathcal{G\}}$ in the
following way. The vertices of $\mathcal{G}$ correspond to the
A\&P-prototiles, one for each $\hat{p}_{v}$, $v=1,2,...,n_{pro}$. There is one
directed edge $e$ of $\mathcal{G}$ from vertex $v$ to $w$ for each distinct
$u\in\mathcal{U}$ such that $\lambda^{-1}\hat{p}_{w}+u\subset P_{v}.$ The
result is a directed graph $\mathcal{G}$ and a set of similitudes
$\mathcal{F}$ so that
\[
P_{v}=\{f_{e}(\hat{p}_{e^{+}}):e^{-}=v\}\text{, }\hat{p}_{v}=\bigcup
\limits_{e^{-}=v}f_{e}(\hat{p}_{e^{+}}),\text{ }f_{e}(x)=\lambda^{-1}%
x+u_{e},\text{ }u_{e}\in\mathbb{R}^{M}%
\]
We have $A=\cup_{v}\hat{p}_{v}$ is the attractor of $\mathcal{\{F}%
,\mathcal{G\}}$ and $\left\{  A_{v}=\hat{p}_{v}:v=1,2,...,n_{pro}\right\}  $
are its components. Using the construction following Definition \ref{defONE}
in Subsection \ref{TIFSsec}, we can assume without loss of generality that the
components $A_{v}=\hat{p}_{v}$ are disjoint. Also $\mathcal{F}$ obeys the OSC,
as can be seen by choosing the open sets $\mathcal{O}_{v}$ to be the interiors
of $\hat{p}_{v}$ for all $v\in\mathcal{V}$. Provided that the A\&P system is
primitive as defined below, see (2) below, $\mathcal{G}$ is strongly connected
and primitive as defined earlier. In this way the partial tilings $P_{i}$ of
the prototiles $\hat{p}_{i}$ define a tiling IFS $\mathcal{\{F},\mathcal{G\}}$.

A\&P require that $(\mathbb{T}_{A\&P},\omega,\mathcal{U})$ have these three properties:

\begin{enumerate}
\item $\omega:\mathbb{T}_{A\&P}\rightarrow\mathbb{T}_{A\&P}$ is bijective;

\item the substitution is \textit{primitive} (there is a fixed positive
integer $N_{0}$ such that for each pair of prototiles $\hat{p}_{i}$ and
$\hat{p}_{j}$, there exists $u\in\mathcal{U}$ so that the partial tiling
$\hat{\omega}^{N_{0}}(\{\hat{p}_{i}\})$ contains $u(\hat{p}_{j})$);

\item $\mathbb{T}_{A\&P}$ satisfies a \textit{finite pattern condition}: for
each $r>0,$ there are only finitely many partial tilings up to translation
that are subsets of tilings in $\mathbb{T}_{A\&P}$ and whose supports have
diameters less than $r$.
\end{enumerate}

Condition 1 is equivalent to \textit{recognizability }as referred to by A\&P
and as defined by \cite{solomyak2}. See also \cite{berthe} and references therein.

In other works, see \cite{solomyak2}, tilings are defined by starting from a
self-similar tiling $T$ of $\mathbb{R}^{M}$ and then taking the closure of the
set of all translations of $T$. A\&P prove that the resulting collection of
tilings is the same as $\mathbb{T}_{A\&P}$. This leads us to the following
question. How is $\mathbb{T}_{A\&P}$ related to the collection of tilings
$\mathbb{T}$ defined in this paper?

To relate the two contexts note that our prototiles are related to
A\&P-prototiles by $p_{v}=\lambda^{-1}\hat{p}_{v}$ for $v=1,2,...,\left\vert
\mathcal{V}\right\vert $ and $s=\lambda^{-1}$. In the present setting, where
tiles have nonempty interiors, note that $\Sigma_{rev}^{\dag}$ is the set of
$\theta\in\Sigma_{\infty}^{\dag}$ such that the support of $\Pi(\theta)$ is
all of $\mathbb{R}^{M}.$

\begin{theorem}
\label{AandPtheorem} Let $\left(  \mathcal{F},\mathcal{G}\right)  $ be a rigid
tiling IFS defined by the partial tilings $P_{i}$ of the sets $\hat{p}_{i}$
$\in\mathbb{T}_{A\&P}$, let $\left\vert P_{i}\right\vert >1$ for all $i$, and
let A\&P's conditions (1) and (2) hold. Then
\[
\mathbb{T}_{A\&P}=\{\lambda u(\Pi(\theta)):\theta\in\Sigma_{rev}^{\dag}%
,u\in\mathcal{U}\}
\]

\end{theorem}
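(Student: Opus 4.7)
The plan starts by establishing a correspondence between canonical tilings and Anderson--Putnam inflations: $\lambda T^v_n = \hat{\omega}^{n+1}(\hat{p}_v)$ for all $n\in\mathbb{N}_0$ and $v\in\mathcal{V}$. I would prove this by induction on $n$. The base case $n=0$ is immediate from $T^v_0 = P_v$ (Definition \ref{canondef}) and $\hat{\omega}(\hat{p}_v) = \lambda P_v$ (the definition of $\hat{\omega}$ applied to a single prototile). The inductive step unwinds $T^v_{n+1}$ using Lemma \ref{lem:canonical} (or via $\alpha^{-1}$ from Section \ref{extendinflation}) and compares the two sides tile-by-tile, tracking the arithmetic of translations under compositions of maps $f_e(x) = \lambda^{-1}x + u_e$.

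\textbf{Stage 2: The inclusion $(\subset)$.} Let $T = \lambda u\Pi(\theta)$ with $\theta\in\Sigma^\dag_{rev}$ and $u\in\mathcal{U}$. Reversibility guarantees $\mathrm{supp}(\Pi(\theta)) = \mathbb{R}^M$, hence $\mathrm{supp}(T) = \mathbb{R}^M$. Every tile of $\Pi(\theta)$ is a translate of some small prototile $p_v = \lambda^{-1}\hat{p}_v$ (because each $f_e$ has no rotational part, so $f_{-\theta|k}f_\sigma$ is a pure translation whenever its net scale factor is one; cf.\ Theorem \ref{thm:three}(5)), so every tile of $T$ is a translate of an A\&P-prototile $\hat{p}_v$. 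For the finite-pattern condition, any bounded patch $P\subset T$ lies in $\lambda u\Pi(\theta|k)$ for some $k$, using the increasing union $\Pi(\theta) = \bigcup_k \Pi(\theta|k)$ from Theorem \ref{thm:three}(2). By Theorem \ref{piusingts} together with Stage 1, $\lambda u\Pi(\theta|k)$ is a translate of $\hat{\omega}^{\xi(\theta|k)+1}(\hat{p}_{(\theta|k)^+})$, which is precisely of the form $\hat{\omega}^n(u'(\hat{p}_i))$ required by the definition of $\mathbb{T}_{A\&P}$.

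\textbf{Stage 3: The reverse inclusion $(\supset)$.} Let $T\in\mathbb{T}_{A\&P}$. A\&P's condition~(1) gives a well-defined deflation sequence $T^{(n)} = \omega^{-n}(T)\in\mathbb{T}_{A\&P}$. Fix a reference point $x_0\in\mathbb{R}^M$ that lies in the interior of some tile of every $T^{(n)}$ (such an $x_0$ exists because each tile boundary has empty interior in $\mathbb{R}^M$ and we take a countable intersection). For each $n$ let $t^{(n)}\in T^{(n)}$ be the unique tile containing $x_0$, and write $t^{(n)} = \hat{p}_{v_n} + w_n$. Define $\theta_n\in\mathcal{E}$ to be the unique edge with $\theta_n^- = v_n$ indexing the position of $t^{(n-1)}$ inside the partial tiling $\hat{\omega}(t^{(n)}) \subset T^{(n-1)}$. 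The $\theta_n$ compose consistently into an infinite path $\theta = \theta_1\theta_2\cdots\in\Sigma^\dag_\infty$ in $\mathcal{G}^\dag$. Applying Stage 1 iteratively, $\hat{\omega}^n(t^{(n)})$ is a translate of $\lambda T^{v_n}_{n-1}$, and matching with Theorem \ref{piusingts} yields a single translation $u$ (determined by $w_0$) so that $\hat{\omega}^n(t^{(n)})$ coincides with $\lambda u\Pi(\theta|n)$ for every $n$. Taking $n\to\infty$, and using $\bigcup_n \mathrm{supp}(\hat{\omega}^n(t^{(n)})) = \mathbb{R}^M$ (which follows from $\lambda>1$ and the primitivity condition~(2)), gives $T = \lambda u\Pi(\theta)$; then $\mathrm{supp}(\Pi(\theta)) = \mathbb{R}^M$ forces $\theta\in\Sigma^\dag_{rev}$. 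The main obstacle is the bookkeeping in this stage, namely verifying that the locally chosen edges $\theta_n$ produce a single global address $\theta$ and single translation $u$ with $T = \lambda u\Pi(\theta)$ on the nose; this relies on rigidity of $(\mathcal{F},\mathcal{G})$ (via Theorem \ref{basictheorem} and Theorem \ref{welldefined}) to make the correspondence between A\&P-deflation $\omega^{-1}$ and the symbolic shift on addresses unambiguous.
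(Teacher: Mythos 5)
Your Stages 1 and 2 are essentially the paper's argument for the inclusion $\{\lambda u(\Pi(\theta))\}\subset\mathbb{T}_{A\&P}$: the paper computes $\Pi(\theta|k)=s\hat{\omega}^{k+1}(u(\hat{p}_{v}))$ with $u=f_{-(\theta|k)}s^{k}$ and $v=(\theta|k)^{+}$, which is your dictionary lemma combined with Theorem \ref{piusingts}, and then appeals to the patch condition and to \cite[Corollary 3.5]{anderson} for the translations. That part is fine.

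The genuine gap is in Stage 3. You fix a reference point $x_{0}$ and follow the intrinsic supertile hierarchy of $T$ through $x_{0}$, and you assert that $\bigcup_{n}\mathrm{supp}(\hat{\omega}^{n}(t^{(n)}))=\mathbb{R}^{M}$ ``follows from $\lambda>1$ and primitivity.'' It does not: the nested supertiles containing a fixed point can have union equal to a proper (e.g.\ half-space-like) subset of $\mathbb{R}^{M}$ even though their diameters tend to infinity --- this is exactly the phenomenon that makes the distinction between $\Sigma_{\infty}^{\dag}$ and $\Sigma_{rev}^{\dag}$ necessary in the first place (compare Example \ref{Example01}, where one hierarchy exhausts only $[0,\infty)$). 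The paper avoids this by never anchoring the hierarchy at a point: it uses the defining patch condition of $\mathbb{T}_{A\&P}$ to choose, for each $r_{k}\rightarrow\infty$, a partial tiling $\hat{\omega}^{n_{k}}(u_{k}(\hat{p}_{i_{k}}))$ containing the entire patch $T_{r_{k}}$ of all tiles meeting $B_{r_{k}}(O)$, so that the resulting increasing union is forced to be all of $T$. Converting that nested sequence $\bigcup_{n}E_{k_{n}}\Pi(\theta^{(k_{n})})$ into a single $E\Pi(\theta)$ is then the real work, done by Proposition \ref{proposition1} (if $\Pi(\theta)\subset E\Pi(\psi)$ for finite addresses then $E\Pi(\psi)=\Pi(\theta\tilde{\psi})$, with $E$ absorbed into the address) --- this is where rigidity enters, and your proposal has no substitute for it; Theorem \ref{basictheorem} concerns equality of infinite tilings and does not do this job. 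There is also a bookkeeping error in your hierarchy: since $\hat{\omega}$ expands by $\lambda$, the tile of $T^{(n-1)}=\omega^{n-1}(T^{(n)})$ lying over $t^{(n)}$ is determined by which tile of $T^{(n)}$ contains $\lambda^{-1}x_{0}$, not $x_{0}$; you must track $\lambda^{-n}x_{0}$ (equivalently, work with the rescaled hierarchy $\lambda^{n}\omega^{-n}(T)$) for the edges $\theta_{n}$ to compose into a path at all.
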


We will need the following observation.

\begin{proposition}
\label{proposition1} Let $\left(  \mathcal{F},\mathcal{G}\right)  $ be a rigid
tiling IFS with $a_{\max}=1.$ Let $\Pi(\theta)\subset E\Pi(\psi)$ for some
$\theta,\psi\in\Sigma_{\ast}^{\dag},E\in\mathcal{U}$. Then $E\Pi(\psi
)=\Pi(\theta\tilde{\psi})$ for some $\tilde{\psi}\in\Sigma_{\ast}^{\dag}$ such
that $\tilde{\psi}^{-}=\theta^{+}$, $\psi^{+}=\tilde{\psi}^{+},\left\vert
\psi\right\vert =\left\vert \theta\right\vert +\left\vert \tilde{\psi
}\right\vert $ .
\end{proposition}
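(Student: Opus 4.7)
The plan is to rewrite both sides of $\Pi(\theta)\subset E\Pi(\psi)$ as isometric images of canonical tilings via Theorem \ref{piusingts}, decompose the outer one into same-level pieces via Lemma \ref{lem:canonical}, and identify $\Pi(\theta)$ with exactly one such piece by means of rigidity. Set $m=|\theta|$ and $n=|\psi|$; comparing the diameters of the supports forces $n\geq m$. Theorem \ref{piusingts} gives $\Pi(\theta)=E_\theta T_m^{\theta^+}$ and $E\Pi(\psi)=EE_\psi T_n^{\psi^+}$ where $E_\theta=f_{-\theta}s^m$ and $E_\psi=f_{-\psi}s^n$ are isometries, so the inclusion becomes $T_m^{\theta^+}\subset F\,T_n^{\psi^+}$ with $F:=E_\theta^{-1}EE_\psi$ an isometry. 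Since $a_{\max}=1$, Lemma \ref{lem:canonical} with $l=n-m-1$ supplies the disjoint decomposition
\[
T_n^{\psi^+}=\bigsqcup_{\omega}E_{n,\omega}T_m^{\omega^+}
\]
indexed by the $\mathcal{G}$-paths $\omega$ of length $n-m$ with $\omega^-=\psi^+$; correspondingly $FT_n^{\psi^+}$ breaks into pieces $FE_{n,\omega}T_m^{\omega^+}$.

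The central step, which I expect to be the main obstacle, is to show that $T_m^{\theta^+}$ coincides with exactly one piece $FE_{n,\omega_0}T_m^{\omega_0^+}$ rather than straddling several or sitting as a strict proper part of one. Pick any tile $t\in T_m^{\theta^+}$; it belongs to a unique piece $FE_{n,\omega_0}T_m^{\omega_0^+}$ of the disjoint decomposition, so the two canonical tilings share a tile and meet. By rigidity, Theorem \ref{equivalence} clause B(ii) (or Corollary \ref{contain}), one of them contains the other. Iterating the deflation $\alpha$ exactly $m$ times --- which acts consistently on isometric combinations of rigid canonical tilings by Theorem \ref{welldefined} --- reduces that containment to $T_0^{\theta^+}\subset HT_0^{\omega_0^+}$ (or the reverse) for some isometry $H$. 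The hypothesis $a_{\max}=1$ is essential here: rigidity clause B(i) then forces this level-$0$ containment to be an equality with $\theta^+=\omega_0^+$ and $H=Id$; unwinding the conjugations by $s^m$ yields $FE_{n,\omega_0}=Id$, i.e.\ $F=E_{n,\omega_0}^{-1}=s^{-m}f_{\omega_0}^{-1}s^n$.

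A direct cancellation of the scalings in $E=E_\theta F E_\psi^{-1}$ then yields $E=f_{-\theta}f_{\omega_0}^{-1}f_{-\psi}^{-1}$. Define $\tilde\psi\in\Sigma_*^\dag$ to be the edge-by-edge reversal of $\omega_0\in\Lambda_{n-m}^{\psi^+,\theta^+}$; the reverse of a $\mathcal{G}$-path is a valid $\mathcal{G}^\dag$-path, and $\tilde\psi$ satisfies $\tilde\psi^-=\omega_0^+=\theta^+$, $\tilde\psi^+=\omega_0^-=\psi^+$, $|\tilde\psi|=n-m$, and crucially $f_{-\tilde\psi}=f_{\omega_0}^{-1}$. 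Plugging these into Theorem \ref{piusingts}, both $E\Pi(\psi)$ and $\Pi(\theta\tilde\psi)$ reduce to $f_{-\theta}f_{-\tilde\psi}s^n T_n^{\psi^+}$, completing the proof. The degenerate case $n=m$, in which $\tilde\psi$ is empty, is handled identically with the decomposition step collapsed: the rigidity argument applied directly at level $m=n$ gives $F=Id$, hence $E=E_\theta E_\psi^{-1}$ and $E\Pi(\psi)=\Pi(\theta)$.
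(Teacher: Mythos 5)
Your argument is sound and reaches the conclusion by a somewhat different organization of the same rigidity mechanism the paper uses. The paper's proof applies $\alpha^{\left\vert \theta\right\vert}$ to both sides first, collapsing $\Pi(\theta)$ to a single copy of $T_{0}^{\theta^{+}}$ inside $\tilde{E}\Pi(S^{\left\vert \theta\right\vert}\psi)$, and then invokes the classification of copies of $T_{0}^{w}$ (Theorem \ref{keythm}) to read off $E$; in particular it produces $\tilde{\psi}=S^{\left\vert \theta\right\vert}\psi$, the literal tail of $\psi$. You instead stay at level $m=\left\vert \theta\right\vert$, decompose $T_{n}^{\psi^{+}}$ into copies of $T_{m}$ via Lemma \ref{lem:canonical}, and use deflation together with condition B(i) to show that $T_{m}^{\theta^{+}}$ coincides with exactly one piece; your $\tilde{\psi}$ is the reversal of the selecting path $\omega_{0}$ rather than the tail of $\psi$. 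Both choices are acceptable since the proposition only asks for \emph{some} $\tilde{\psi}$, and your identity $f_{-\tilde{\psi}}=f_{\omega_{0}}^{-1}$ and the final verification via Theorem \ref{piusingts} are correct.

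One step does not hold up as written: the claim that ``comparing the diameters of the supports forces $n\geq m$.'' The supports are $s^{-m}$ times an isometric copy of $A_{\theta^{+}}$ and $s^{-n}$ times an isometric copy of $A_{\psi^{+}}$; when $\left\vert \mathcal{V}\right\vert >1$ these components may have different diameters, so the containment of supports only gives $s^{n-m}\leq\mathrm{diam}(A_{\psi^{+}})/\mathrm{diam}(A_{\theta^{+}})$, i.e.\ $n\geq m-C$ for some constant $C\geq0$, not $n\geq m$. Since $\left\vert \psi\right\vert \geq\left\vert \theta\right\vert$ is part of the conclusion (because $\left\vert \tilde{\psi}\right\vert \geq0$), this needs an actual argument: for example, assume $m>n$, decompose $T_{m}^{\theta^{+}}$ into disjoint copies of $T_{n}$, deflate $n$ times using Theorem \ref{welldefined}, and use B(i) to force two disjoint pieces to coincide, a contradiction. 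To be fair, the paper's own proof silently assumes $\left\vert \psi\right\vert \geq\left\vert \theta\right\vert$ as well (it writes $\Pi(S^{\left\vert \theta\right\vert}\psi)$ without comment). A second, smaller point: sharing a tile gives $P\cap Q\neq\varnothing$ but not automatically the support condition in Definition \ref{meetdef}; the paper is equally casual about this throughout, so I do not count it against you.
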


\begin{proof}
[Proof of Proposition \ref{proposition1}]%
\begin{align}
\Pi(\theta)  &  \subset E\Pi(\psi)\nonumber\\
&  \Longrightarrow\alpha^{\left\vert \theta\right\vert }\Pi(\theta
)\subset\alpha^{\left\vert \theta\right\vert }E\Pi(\psi)\label{middle}\\
&  \Longrightarrow s^{\left\vert \theta\right\vert }f_{-\theta}T_{0}%
^{\theta^{+}}\subset s^{\left\vert \theta\right\vert }Ef_{-\psi|\left(
\left\vert \theta\right\vert \right)  }\Pi(S^{\left\vert \theta\right\vert
}\psi)=\tilde{E}\Pi(\tilde{\psi})\nonumber
\end{align}
where $\tilde{E}:=s^{\left\vert \theta\right\vert }Ef_{-\psi|\left(
\left\vert \theta\right\vert \right)  }\in\mathcal{U}$ and $\tilde{\psi
}:=S^{\left\vert \theta\right\vert }\psi.$ But%
\[
\tilde{E}\Pi(\tilde{\psi})=\tilde{E}\{f_{-\tilde{\psi}}f_{\omega}%
(T_{0}^{\omega^{+}}):\omega^{-}=\tilde{\psi}^{+},\left\vert \omega\right\vert
=\left\vert \tilde{\psi}\right\vert \}
\]
so by rigidity there is some $\omega\in\Sigma_{\ast}^{\dag}$ with $\omega
^{-}=\tilde{\psi}^{+}$ and $\left\vert \omega\right\vert =\left\vert
\tilde{\psi}\right\vert ,$ such that
\begin{align*}
s^{\left\vert \theta\right\vert }f_{-\theta}T_{0}^{\theta^{+}}  &  =\tilde
{E}f_{-\tilde{\psi}}f_{\omega}(T_{0}^{\omega^{+}})\\
&  \Longrightarrow f_{-\theta}T_{0}^{\theta^{+}}=Ef_{-\psi|\left(  \left\vert
\theta\right\vert \right)  }f_{-\tilde{\psi}}f_{\omega}(T_{0}^{\omega^{+}})\\
&  \Longrightarrow f_{-\theta}T_{0}^{\theta^{+}}=Ef_{-\psi|\left(  \left\vert
\theta\right\vert \right)  }f_{-\tilde{\psi}}f_{\omega}(T_{0}^{\omega^{+}})\\
&  \Longrightarrow\left(  f_{-\psi|\left(  \left\vert \theta\right\vert
\right)  }f_{-\tilde{\psi}}f_{\omega}\right)  ^{-1}f_{-\theta}=E
\end{align*}
where we have again used rigidity to deduce the last implication. We now
substitute back into Equation \ref{middle} to obtain
\[
\alpha^{\left\vert \theta\right\vert }E\Pi(\psi)=\alpha^{\left\vert
\theta\right\vert }\left(  f_{-\psi|\left(  \left\vert \theta\right\vert
\right)  }f_{-\tilde{\psi}}f_{\omega}\right)  ^{-1}\Pi(\psi)
\]
and applying $\alpha^{-\left\vert \theta\right\vert }$ to both sides we get
\[
E\Pi(\psi)=\left(  f_{-\psi|\left(  \left\vert \theta\right\vert \right)
}f_{-\tilde{\psi}}f_{\omega}\right)  ^{-1}\Pi(\psi)=\Pi(\theta\tilde{\psi})
\]
as stated in the Theorem.
\end{proof}

\begin{proof}
[Proof of Theorem \ref{AandPtheorem}]Let $T\in\mathbb{T}_{A\&P}$. Let $r>0$.
Let $T_{r}$ be the partial tiling $T_{r}:=\{t\in T:t\cap B_{r}(O)\neq
\varnothing\}$ where $B_{r}(O)$ is the open ball of radius $r$. Let $r_{1}=1$.
Then $T_{1}\subset\hat{\omega}^{n_{1}}(u_{1}(\hat{p}_{i_{1}}))$ for some
$n_{1},u_{1},i_{1}$. Now choose $r_{2}>r_{1}$ so that $\hat{\omega}^{n_{1}%
}(u_{1}(\hat{p}_{i_{1}}))\subset T_{r_{2}}$ and choose $n_{2},i_{2},u_{2}$ so
that $T_{2}\subset\hat{\omega}^{n_{2}}(u_{2}(\hat{p}_{i_{2}}))$. Proceeding in
this manner we find%
\[
T_{r_{1}=1}\subset\hat{\omega}^{n_{1}}(u_{1}(\hat{p}_{i_{1}}))\subset
T_{r_{2}}\subset\hat{\omega}^{n_{2}}(u_{2}(\hat{p}_{i_{2}}))\subset T_{r_{3}%
}\subset\hat{\omega}^{n_{3}}(u_{2}(\hat{p}_{i_{3}}))\subset...
\]
Hence we can rewrite $T$ as a the union of a strictly increasing sequence of
partial tilings,%
\[
T=\bigcup\limits_{k\in\mathbb{N}}T_{r_{k}}=\bigcup\limits_{k\in\mathbb{N}}%
\hat{\omega}^{n_{k}}(u_{k}(\hat{p}_{i_{k}}))
\]
for some sequence $\left(  n_{k},u_{k},i_{k}\right)  $. Since the sequence
$\left\{  \hat{\omega}^{n_{k}}(u_{k}(\hat{p}_{i_{k}}))\right\}  $ is
increasing (nested), we can replace the sequence $\left\{
n=1,2,3,...\right\}  $ by any infinite subsequence of it. Also, let $\hat
{p}_{v}$ be such that $\hat{p}_{v}=\hat{p}_{i_{k}}$ for infinitely many values
of $k$. It follows that there is an infinite subsequence $(n_{k_{n}},u_{k_{n}%
})$ such that%
\[
T=\bigcup\limits_{n\in\mathbb{N}}\hat{\omega}^{n_{k_{n}}}(u_{k_{n}}(\hat
{p}_{v}))=\bigcup\limits_{n\in\mathbb{N}}(s^{-1}u_{k_{n}}s\hat{\omega
}^{n_{k_{n}}}\hat{p}_{v})=s^{-1}\bigcup\limits_{n\in\mathbb{N}}(u_{k_{n}%
}T_{n_{k_{n}}-1}^{v})
\]
It follows that there is a sequence $\left\{  \theta^{\left(  k_{n}\right)
}\in\Sigma_{\ast}^{\dag}:\left\vert \theta^{\left(  k_{n}\right)  }\right\vert
=k_{n},\left(  \theta^{\left(  k_{n}\right)  }\right)  ^{-}=v\right\}  $ and a
sequence of translations $\left\{  E_{k_{n}}\in\mathcal{U}\right\}  $ so that
$sT$ can be written as the increasing union%
\[
sT=\bigcup\limits_{n\in\mathbb{N}}E_{k_{n}}\Pi(\theta^{\left(  k_{n}\right)
})
\]
We now apply Proposition \ref{proposition1} repeatedly to deduce that there
are unique $E\in\mathcal{U}$ and $\theta\in\Sigma^{\dag}$ such that
$E=E_{k_{n}}$ and $\theta|k_{n}=\theta^{\left(  k_{n}\right)  }$ for all $n$
\[
sT=\bigcup\limits_{n\in\mathbb{N}}E\Pi(\theta|n)=E\Pi(\theta)
\]
This completes the proof that $\mathbb{T}_{A\&P}\subset\{\lambda u(\Pi
(\theta)):\theta\in\Sigma_{rev}^{\dag},u\in\mathcal{U}\}.$ To prove the
inclusion the other way round, suppose that $u\in\mathcal{U}$ and $\Pi
(\theta)$ with $\theta\in\Sigma_{rev}^{\dag}$ is given. Since $\theta\in
\Sigma_{rev}^{\dag},$ $\Pi(\theta)$ is supported on $\mathbb{R}^{M}%
=\mathbb{R}^{d}$. Then we need to show that there is $T\in\mathbb{T}_{A\&P}$
such that $T=u\lambda\Pi(\theta)$. We show instead that there is $T^{\prime
}\in\mathbb{T}_{A\&P}$ such that $T^{\prime}=\lambda\Pi(\theta)$ because then,
by \cite[Corollary 3.5]{anderson}, $\mathbb{T}_{A\&P}$ contains all
translations of any tiling that it contains. Let $P$ be a patch in $\Pi
(\theta)$. Then $P\subset\Pi(\theta|k)$ for some $k$. We show that $\Pi
(\theta|k)=s\hat{\omega}^{k+1}(u(\hat{p}_{v}))$ for some $u$ and $P_{v}$. But
\begin{align*}
\Pi(\theta|k)  &  =f_{-\left(  \theta|k\right)  }s^{k}T_{k}^{(\theta|k)^{+}%
}=f_{-\left(  \theta|k\right)  }s^{k}\alpha^{-k}T_{0}^{(\theta|k)^{+}}\\
&  =\alpha^{-k}f_{-\left(  \theta|k\right)  }s^{k}T_{0}^{(\theta|k)^{+}%
}=\alpha^{-k}uT_{0}^{(\theta|k)^{+}}\\
&  =s\hat{\omega}^{k+1}(u(\hat{p}_{v}))
\end{align*}
where $u=f_{-\left(  \theta|k\right)  }s^{k}\in\mathcal{U}$ and $v=(\theta
|k)^{+}$. Since the patch $P$ of $\Pi(\theta)$ is arbitrary, it follows that
$\Pi(\theta)\in s\mathbb{T}_{A\&P}$.
\end{proof}

If $(\mathcal{F}$,$\mathcal{G)}$ is rigid, then $\alpha$ and $\alpha^{-1}$ are
well-defined on tilings. For the case where $a_{\max}=1$, tiles have non-empty
interiors, and $\mathcal{U}$ is translations, this means if $(\mathcal{F}%
$,$\mathcal{G)}$ is rigid, then all tilings of $\mathbb{R}^{M}$ in
$\mathbb{T}_{\infty}$ are recognizable in the sense of $A\&P$ and
\cite{solomyak2}. But we do not know whether or not, in the same setting,
recognizability implies rigidity.

\begin{acknowledgement}
We thank Christoph Bandt for advice, discussions, and help.
\end{acknowledgement}

\end{document}